\definecolor{citation}{rgb}{0.2,0.58,0.2} 
\definecolor{formula}{rgb}{0.1,0.2,0.6}
\definecolor{url}{rgb}{0.3,0,0.5}    
\newcommand{\R}{{\mathds R}}
\newcommand{\Om}{{\Omega}}
\DeclareMathOperator*{\esssup}{ess\,sup}
\DeclareMathOperator*{\essinf}{ess\,inf}
\DeclareMathOperator*{\essliminf}{ess\,lim\,inf}
\DeclareMathOperator*{\esslimsup}{ess\,lim\,sup}
\DeclareMathOperator*{\osc}{osc}
\DeclareMathOperator*{\supp}{supp}
\DeclareMathOperator*{\dist}{dist}
   \def\XXint#1#2#3{{\setbox0=\hbox{$#1{#2#3}{\int}$}
        \vcenter{\hbox{$#2#3$}}\kern-.5\wd0}}
\def\dys{\displaystyle}
\def\dxy{\,{\rm d}x{\rm d}y}
\def\dx{\,{\rm d}x}
\def\dy{\,{\rm d}y}
\def\dt{\,{\rm d}t}
\def\vs{\vspace{0.9mm}}
\def\eps{\varepsilon}
\DeclareRobustCommand*{\bfseries}{%
  \not@math@alphabet\bfseries\mathbf
  \fontseries\bfdefault\selectfont
  \boldmath
}
\def\mean#1{\mathchoice%
          {\mathop{\kern 0.2em\vrule width 0.6em height 0.69678ex depth -0.58065ex
                  \kern -0.8em \intop}\nolimits_{\kern -0.4em#1}}%
          {\mathop{\kern 0.1em\vrule width 0.5em height 0.69678ex depth -0.60387ex
                  \kern -0.6em \intop}\nolimits_{#1}}%
          {\mathop{\kern 0.1em\vrule width 0.5em height 0.69678ex
              depth -0.60387ex
                  \kern -0.6em \intop}\nolimits_{#1}}%
          {\mathop{\kern 0.1em\vrule width 0.5em height 0.69678ex depth -0.60387ex
                  \kern -0.6em \intop}\nolimits_{#1}}}
\newlength{\defbaselineskip}
\newcommand{\setlinespacing}[1]
           {\setlength{\baselineskip}{#1 \defbaselineskip}}
\begin{document}

\title{Fractional superharmonic functions  and the Perron method   for nonlinear integro-differential equations\thanks{\it The first author has been supported by the Magnus Ehrnrooth Foundation (grant no. ma2014n1, ma2015n3). The second author has been supported by the Academy of Finland. The third author is a member of Gruppo Nazionale per l'Analisi Matematica, la Probabilit\`a e le loro Applicazioni (GNAMPA) of Istituto Nazionale di Alta Matematica ``F.~Severi'' (INdAM), whose support is acknowledged.\vspace{1mm}}
}

\titlerunning{Nonlinear integro-differential equations}

\author{Janne Korvenp\"a\"a       \and
        Tuomo Kuusi 
        \and Giampiero Palatucci
}

\authorrunning{J. Korvenp\"a\"a, T. Kuusi, G. Palatucci} 

\institute{J. Korvenp\"a\"a, T. Kuusi \at
             Department of Mathematics and Systems Analysis, Aalto University\\
             P.O. Box 1100\\
             00076 Aalto, Finland\\
              Telefax:  +358 9 863 2048\\
              \email{janne.korvenpaa@aalto.fi} \\  \email{tuomo.kuusi@aalto.fi}         
           \and
           G. Palatucci \at
              Dipartimento di Matematica e Informatica, Universit\`a degli Studi di Parma\\
              Campus - Parco Area delle Scienze,~53/A\\
              I-43124 Parma, Italy\\
              Tel: +39 521 90 21 11 
              \and
              Laboratoire MIPA, Universit\'e de N\^imes\\
              Site des Carmes - Place Gabriel P\'eri\\
              F-30021 N\^imes, France\\
              Tel: +33 466 27 95 57
              \\ \email{giampiero.palatucci@unipr.it}
}

\date{      }

\maketitle

\setcounter{tocdepth}{2}

%
%
%
%
%
\vspace{-2.8cm}
\begin{center}
 \rule{11.3cm}{0.5pt}\\[0.1cm] 
{\sc {\small To appear in}}\, {\it Math. Ann.}
\rule{11.3cm}{0.2pt}
\end{center}
\vspace{1mm}

\begin{abstract}
We deal with a class of equations driven by nonlocal, possibly degenerate, integro-differential operators of differentiability order $s\in (0,1)$ and summability growth $p>1$, whose model is the fractional $p$-Laplacian with measurable coefficients. 
We state and prove several results for the corresponding weak supersolutions, as comparison principles, a priori bounds, lower semicontinuity, and many others.
We then discuss the good definition of $(s,p)$-superharmonic functions, by also proving some related properties.
We finally introduce the nonlocal counterpart of the celebrated Perron method in nonlinear Potential Theory.
 \keywords{Quasilinear nonlocal operators \and fractional Sobolev spaces \and fractional Laplacian \and nonlocal tail \and Caccioppoli estimates \and obstacle~problem \and comparison estimates \and fractional superharmonic functions \and the Perron Method} 
 \subclass{Primary: 35D10  \and  35B45; \
Secondary: 35B05 \and  35R05 \and  47G20 \and  60J75}
\end{abstract}

\tableofcontents

\setlinespacing{1.08}


\section{Introduction}

The Perron method (also known as the PWB method, after Perron, Wiener, and Brelot) is a consolidated method introduced at the beginning of the last century in order to solve the Dirichlet problem for the Laplace equation in a given open set~$\Omega$ with arbitrary boundary data~$g$; that is,
\begin{equation}\label{pb_dirichlet}
\begin{cases}
\mathcal{L} u = 0 & \text{in} \ \Omega \\[0.5ex]
u = g & \text{on the boundary of} \ \Omega,
\end{cases}
\end{equation}
when $\mathcal{L}=\Delta$. 
Roughly speaking, the Perron method works by finding the least superharmonic function with boundary values above the given values $g$. Under an assumption $g \in H^1(\Omega)$, the so-called Perron solution coincides with the desired Dirichlet energy solution. However, for general $g$ energy methods do not work and this is precisely the motivation of the Perron method. The method works essentially for many other partial differential equations whenever a comparison principle is available and appropriate barriers can be constructed to assume the boundary conditions. Thus, perhaps surprisingly, it turns out that the method extends to the case when the Laplacian operator in~\eqref{pb_dirichlet} is replaced by the $p$-Laplacian operator $(-\Delta_p)$ (see e.g. \cite{GLM86}) or even by
more general nonlinear operators. Consequently, the Perron method has become a fundamental tool in nonlinear Potential Theory, as well as 
in the study of several branches of Mathematics and Mathematical Physics when problems as in~\eqref{pb_dirichlet}, and the corresponding variational formulations arising from different contexts. The nonlinear Potential Theory covers a classical field having grown a lot during the last three decades from the necessity to understand better properties of supersolutions, potentials and obstacles. Much has been written about this topic and the connection with the theory of degenerate elliptic equations; we refer the reader to the exhaustive book~\cite{HKM06} by Heinonen, Kilpel\"ainen and Martio, and to the useful lecture notes~\cite{Lin06} by Lindqvist.

However -- though many important physical contexts can be surely modeled using potentials satisfying the Laplace equation or via partial differential equations as in~\eqref{pb_dirichlet} with the leading term 
 given by a  nonlinear operator as for instance the $p$-Laplacian with coefficients -- other contexts, as e.~\!g. from Biology and Financial Mathematics, are naturally described by the fractional counterpart of~\eqref{pb_dirichlet}, that is, the fractional Laplacian operator $(-\Delta)^s$. Recently,
a great attention has been focused on the study of problems involving fractional Sobolev spaces and corresponding nonlocal equations, both from a pure mathematical point of view and for concrete applications, since they naturally arise in many contexts when the interactions coming from far are determinant\footnote{For an elementary introduction to this topic and for a quite wide, but still limited, list of related references we refer to~\cite{DPV12}.}.

More in general, one can consider a class of fractional Laplacian-type operators with nonlinear growth together with a natural inhomogeneity. Accordingly, we deal 
with an extended class of nonlinear nonlocal equations, which include as a particular case some fractional Laplacian-type equations,
\begin{equation}\label{lequazione}
\mathcal{L}u(x)=\int_{\mathds{R}^n} K(x,y)|u(x)-u(y)|^{p-2}\big(u(x)-u(y)\big)\dy \, = \, 0, \quad x\in \R^n,
\end{equation}
where, for any $s\in (0,1)$ and any $p>1$,  $K$ is a suitable symmetric kernel of order $(s,p)$ with merely measurable coefficients. The integral may be singular at the origin and must be interpreted in the appropriate sense.
We immediately refer  to Section~\ref{sec_preliminaries} for the precise assumptions on the involved quantities. However, in order to simplify, one can just keep in mind the model case when the kernel $K=K(x,y)$  coincides with the Gagliardo kernel $|x-y|^{-n-sp}$; that is, when the equation in~\eqref{pb_dirichlet} reduces to
\begin{equation*}
(-\Delta)^{s}_p \,u = 0  \quad \text{in } \R^n,  
\end{equation*}
where the symbol~$\dys (-\Delta)^{s}_p$ denotes the usual {\it fractional $p$-Laplacian} operator, though in such a case the difficulties arising from having merely measurable coefficients disappear.
\vs

Let us come back to the celebrated Perron method. 
To our knowledge, especially in the nonlinear case when $p\neq 2$, the nonlocal counterpart seems basically missing\footnote{As we were finishing this manuscript, we became aware of very recent manuscript~\cite{LL16} having an independent and different approach to the problem.},  and even the theory concerning regularity and related results for the operators in~\eqref{lequazione} appears to be rather incomplete. Nonetheless, some partial results are known. It is worth citing the higher regularity contributions in~\cite{DKP15,BL15}, together with the viscosity approach in the recent paper~\cite{Lin15}, and~\cite{CLM12} for related existence and uniqueness results in the case when $p$ goes to infinity. Also, we would like to mention the related results involving measure data, as seen in~\cite{KMS15,LPPS15,BPV15}, and the fine analysis in the papers~\cite{FP14,IS14,BPS15,LL14,IMS16} where various results for fractional $p$-eigenvalues have been proven.
\vs

First, the main difference with respect to the local case is that for nonlocal equations the Dirichlet condition has to be taken in the whole complement~$\R^n\setminus\Omega$ of the domain,  
 instead of only on the boundary~$\partial\Omega$.
This comes from the very definition of the fractional operators in~\eqref{lequazione}, and it is strictly related to the natural nonlocality of those operators, and the fact that the behavior of a function outside the set~$\Omega$ does affect the problem in the whole space (and particularly on the boundary of $\Omega$),
which is indeed one of the main feature why those operators naturally arise in many contexts. On the other hand, such a nonlocal feature is also one of the main difficulties to be handled when dealing with  fractional operators. For this, some sophisticated  tools and techniques have been recently developed to treat the nonlocality, and to achieve many fundamental results for nonlocal equations.
We seize thus the opportunity to mention the breakthrough paper~\cite{Kas11} by Kassmann,  
where he revisited classical Harnack inequalities in a completely new nonlocal form by incorporating some precise nonlocal terms. This is also the case here, and indeed 
we have to consider
 a special quantity, the {\it nonlocal tail} of a function~$u$ in the ball of radius $r>0$ centered in $z \in \R^n$, given by
\begin{equation} \label{lacoda} 
{\rm Tail}(u;z,r) := \bigg(r^{sp} \int_{\R^n \setminus B_r(z)} |u(x)|^{p-1} |x-z|^{-n-sp} \dx \bigg)^{\frac{1}{p-1}}.
\end{equation}
The nonlocal tail will be a key-point in the proofs when a fine quantitative control of the long-range interactions, naturally arising when dealing with nonlocal operators as in~\eqref{lequazione}, is needed. This quantity has been introduced in~\cite{DKP15} and has been subsequently used in several recent results on the topic (see Section~\ref{sec_preliminaries} for further details).

In clear accordance with the definition in~\eqref{lacoda}, for any $p>1$ and any $s\in (0,1)$, we consider the corresponding  {\it tail space} $L^{p-1}_{sp}(\R^n)$ given by
\begin{equation} \label{def_tailspace}
L^{p-1}_{sp}(\R^n) := \Big\{ f \in L_{\rm loc}^{p-1}(\R^n)  \, : \,    {\rm Tail}(f;0,1)< \infty \Big\}.
\end{equation}
In particular, if $f \in L^{p-1}_{sp}(\R^n)$, then ${\rm Tail}(u;z,r) < \infty$ for all $z \in \R^n$ and $r\in (0,\infty)$. 
It is worth noticing that the two definitions above are very natural, by involving essentially only the leading parameters defining the nonlocal nonlinear operators; i.~\!e., their differentiability order~$s$ and their summability exponent~$p$. 
Said this, we can now approach the nonlocal counterpart of the Perron method, which, as is well-known, relies on the concept of superharmonic functions. A good definition of nonlocal nonlinear superharmonic functions is needed\footnote{
We take the liberty to call superharmonic functions appearing in this context as {\it $(s,p)$-superharmonic} emphasizing the $(s,p)$-order of the involved Gagliardo kernel. 
}.
We thus introduce the {\it $(s,p)$-superharmonic functions}, by stating and proving also their main  properties (see Section~\ref{sec_superharmonic}). The $(s,p)$-superharmonic functions constitute the nonlocal counterpart of the $p$-superharmonic functions considered in the important paper~\cite{L1}.
As expected, in view of the nonlocality of the involved operators~$\mathcal{L}$, this new definition will require to take into account the nonlocal tail in~\eqref{lacoda}, in the form of the suitable tail space~$L_{sp}^{p-1}(\R^{n})$. This is in clear accordance with the theory encountered in all the aforementioned papers, when nonlocal operators have to be dealt with in bounded domains.

\subsection{Class of $(s,p)$-superharmonic functions}
\begin{definition} \label{def_superharmonic}
We say that a function $u\colon\R^n \to [-\infty,\infty]$ is an {\it $(s,p)$-super-\break harmonic function} 
 in an open set $\Omega$ if it satisfies the following four assumptions:
\begin{itemize}
\item[(i)] $u<+\infty$ almost everywhere and $u> -\infty$ everywhere in~$\Omega$, \vs
\item[(ii)] $u$ is lower semicontinuous (l.~\!s.~\!c.) in~$\Omega$,\vs
\item[(iii)] $u$ satisfies the comparison in $\Omega$ against solutions bounded from above; that is, if $D \Subset \Omega$ is an open set and $v \in C(\overline{D})$ is a weak solution in~$D$ such that $ v_+ \in L^\infty(\R^n)$ and $u \geq v$ on $\partial D$ and almost everywhere on~$\R^n \setminus D$, then $u \geq v$ in $D$,\vs
\item[(iv)] $u_-$ belongs to $L_{sp}^{p-1}(\R^{n})$.
\end{itemize}
We say that a function $u$ is {\it $(s,p)$-subharmonic} in $\Omega$ if $-u$ is $(s,p)$-superharmonic in $\Omega$; and when both $u$ and $-u$ are $(s,p)$-superharmonic, we say that $u$ is {\it $(s,p)$-harmonic}. 
\end{definition}

\begin{remark} \label{remark:superharmonic loc bdd}
An $(s,p)$-superharmonic function
is locally bounded from below in $\Omega$ as the lower semicontinuous function attains its minimum on compact sets and it cannot be $-\infty$ by the definition.
\end{remark}

\begin{remark} \label{remark:min superharmonic}
From the definition it is immediately seen that the pointwise minimum of two $(s,p)$-superharmonic functions is $(s,p)$-superharmonic as well.
\end{remark}

\begin{remark} \label{remark:viscosity}
In the forthcoming paper~\cite{KKL16} it is shown that the class of $(s,p)$-superharmonic functions is precisely the class of viscosity supersolutions for~\eqref{lequazione} (for a more restricted class of kernels). 
\end{remark}

\begin{remark} \label{remark:harmonicity}
In Corollary~\ref{cor_harmharm} we show that a function $u$ is $(s,p)$-harmonic in $\Omega$ if and only if $u$ is a continuous weak solution in $\Omega$. 
\end{remark}

\begin{remark} \label{remark:riesz}
In the case $p=2$ and $K(x,y) = |x-y|^{-n-2s}$, the Riesz kernel 
$
u(x) = |x|^{2s-n}
$
is an $(s,2)$-superharmonic function in $\R^n$, but it is not a weak supersolution. It is the integrability $W_{\rm loc}^{s,2}$ that fails. 
\end{remark}

The next theorem describes the basic properties of $(s,p)$-superharmonic functions, which all seem to be necessary for the theory. 
\begin{theorem} \label{thm:superharmonic}
Suppose that $u$ is $(s,p)$-superharmonic in an open set $\Omega$. Then it has the following properties: 
\begin{itemize}
\item[(i)] {\bf Pointwise behavior}.
\begin{equation*} 
u(x)= \liminf_{y \to x} u(y) = \essliminf_{y \to x}u(y) \quad \text{for every } x\in\Omega.
\end{equation*}
\item[(ii)] {\bf Summability}. For 
\begin{equation*}  \label{e.bar qt}
\bar t := \begin{cases}
  \frac{(p-1)n}{n-sp}, & 1<p< \frac ns,   \\[1ex]
   +\infty , & p \geq \frac{n}s,
\end{cases} \qquad \bar q := \min\left\{\frac{n(p-1)}{n-s},p\right\},
\end{equation*}
and  $h \in (0,s)$, $t \in (0,\bar t)$ and $q \in (0,\bar q)$, $u \in W_{\rm loc}^{h,q}(\Omega) \cap L_{\rm loc}^{t}(\Omega) \cap L_{sp}^{p-1}(\R^n)$. \vs
\item[(iii)] {\bf Unbounded comparison}. 
 If $D \Subset \Omega$ is an open set and $v \in C(\overline{D})$ is a weak solution in $D$ such that $u \geq v$ on $\partial D$ and almost everywhere on~$\R^n \setminus D$, then $u \geq v$ in $D$.\vs
\item[(iv)] {\bf Connection to weak supersolutions}. If $u$ is locally bounded in $\Omega$ or $u \in W^{s,p}_{\rm loc}(\Omega)$,  
then it is a weak supersolution in $\Omega$.  
\end{itemize}
\end{theorem}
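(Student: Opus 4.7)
The four claims are tightly intertwined, and the plan is to prove them essentially in the order (iv) $\to$ (ii) $\to$ (iii) $\to$ (i). The central device is the truncation $u_k:=\min\{u,k\}$, $k\ge 1$, which by Remark~\ref{remark:min superharmonic} is again $(s,p)$-superharmonic in $\Omega$ and now bounded from above; combined with Remark~\ref{remark:superharmonic loc bdd} it is moreover locally bounded in $\Omega$.

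\textbf{Step~1: (iv).} Assume first $u$ is locally bounded in $\Omega$. Fix $B\Subset\Omega$ and let $w$ be the unique continuous weak solution of the obstacle problem in $B$ with obstacle $u$ and exterior datum $u$, constructed in the earlier sections of the paper. By construction $w\ge u$ in $B$, $w=u$ on $\R^n\setminus B$, and $w$ is a weak solution of $\mathcal{L}w=0$ on the open coincidence-free set $\{w>u\}\cap B$. Since $w$ is continuous and bounded there, property~(iii) of Definition~\ref{def_superharmonic} applied on subballs of $\{w>u\}\cap B$ forces $u\ge w$ pointwise, hence $u=w$ in $B$ and $u$ is a weak supersolution. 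The case $u\in W^{s,p}_{\rm loc}(\Omega)$ is handled by direct testing via a cutoff-and-mollification argument, now admissible because the weak formulation is well posed.

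\textbf{Step~2: (ii).} Apply the Caccioppoli inequality from earlier in the paper to the bounded weak supersolutions $u_k$ produced in Step~1, on concentric balls; the positive nonlocal tail arising from truncation at level $k$ is controlled by $k$ and the negative tail by $u_-\in L^{p-1}_{sp}(\R^n)$ from assumption~(iv) of Definition~\ref{def_superharmonic}. Iterating with fractional Sobolev embedding produces, uniformly in $k$, local bounds in $W^{h,q}\cap L^t$ for $h<s$, $q<\bar q$, $t<\bar t$ (this is the standard Kilpel\"ainen--Lindqvist scheme, adapted to the nonlocal setting through the tail). Monotone convergence $u_k\nearrow u$ then yields the corresponding bounds for $u$, and ${\rm Tail}(u;0,1)<\infty$ is recovered from the uniform tail estimates on $u_k$ together with $u_-\in L^{p-1}_{sp}(\R^n)$.

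\textbf{Step~3: (iii) and (i); main obstacle.} For (iii), since $v\in C(\overline D)$ is bounded on $\overline D$, produce via the continuous dependence of weak solutions on their exterior datum in the tail space (established earlier) continuous weak solutions $\tilde v_k$ in $D$ whose exterior data agree with $\min\{v,k\}$ and which converge to $v$ in $\overline D$; each $(\tilde v_k)_+\in L^\infty(\R^n)$, so part~(iii) of Definition~\ref{def_superharmonic} gives $u\ge \tilde v_k$ in $D$, and letting $k\to\infty$ yields $u\ge v$ in $D$. For (i), lower semicontinuity immediately gives $u(x)\le\liminf_{y\to x}u(y)\le\essliminf_{y\to x}u(y)$; if the latter exceeded $u(x)+\varepsilon$ one would solve the Dirichlet problem in a small $B_r(x)$ with exterior datum $\min\{u,u(x)+\varepsilon/2\}$ to obtain a continuous weak solution $v$ with $v(x)>u(x)$, contradicting (iii) just proved. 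The main obstacle of the whole argument is Step~1: without any a priori Sobolev regularity for the $(s,p)$-superharmonic function $u$, the weak formulation cannot be tested directly, and one is forced to detour through the nonlocal obstacle problem and a covering/comparison argument, where the tail machinery of earlier sections does the real work.
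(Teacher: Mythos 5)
Your overall architecture (truncation $\min\{u,k\}$, detour through the obstacle problem, convergence theorems, comparison) is in the spirit of the paper, but Step~1 contains the decisive gap. The obstacle problem of Theorem~\ref{obst prob sol} is only posed for data $g\in W^{s,p}(\Omega')\cap L^{p-1}_{sp}(\R^n)$, so you cannot ``let $w$ solve the obstacle problem with obstacle $u$ and exterior datum $u$'': an $(s,p)$-superharmonic function has no a priori fractional Sobolev regularity (see Remark~\ref{remark:riesz}), which is precisely the obstruction you yourself name in your last sentence. The missing idea is to first approximate $u$ \emph{from below} by admissible data: infimal convolution gives smooth $\psi_j\nearrow u$ (Lemma~\ref{lem_approximation}), the obstacle problems with data $\psi_j$ produce an increasing sequence of continuous weak supersolutions $u_j\leq u$ converging to $u$ (Lemma~\ref{lem_approximation2}), and one then passes to the limit with the convergence theorem for supersolutions (Theorem~\ref{lemma:conv supersolution}), which itself rests on uniform local energy bounds (Lemma~\ref{seminorm bound}). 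Your treatment of the case $u\in W^{s,p}_{\rm loc}(\Omega)$ by ``direct testing'' is circular: finiteness of the weak form does not give the sign of the inequality; the correct route is to truncate, use the locally bounded case, and apply Corollary~\ref{lemma:conv supersolution2}. In Step~2 your description is internally inconsistent: if the positive tail of $\min\{u,k\}$ is only ``controlled by $k$'', the resulting bounds are not uniform in $k$; the point is to use estimates whose right-hand sides involve only $\inf\min\{u,k\}$ and ${\rm Tail}\bigl((\min\{u,k\})_-\bigr)$, i.e.\ the weak Harnack inequality (Theorem~\ref{thm_weakharnack}) and Lemma~\ref{lemma:lower 2}, both uniform in the truncation level.

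In Step~3, the ``continuous dependence of weak solutions on their exterior datum'' that you invoke for (iii) is not available beforehand: it is exactly the stability Lemma~\ref{l.stability}, a substantial ingredient that must be proved (energy bound, compactness, identification of the limit by monotonicity); moreover the approximating solutions converge only locally uniformly inside $D$, and only on subdomains whose complement satisfies the measure density condition, not ``in $\overline D$'' for an arbitrary $D$ — this is why the paper works with $v-\eps$, the compact set $\{u\leq v-\eps\}$ and nested regular subdomains. Your proof of (i) is circular: to apply (iii) in $B_r(x)$ you need $u\geq v$ \emph{pointwise} on $\partial B_r(x)$, but before (i) is established you only control $u$ up to sets of measure zero, and a lower semicontinuous function may well dip below its essential liminf on $\partial B_r(x)$. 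In addition, the key claim $v(x)>u(x)$ for the auxiliary solution with datum $\min\{u,u(x)+\eps/2\}$ is asserted without proof; it requires a quantitative lower bound in which the far-field tail of the datum is split and made small by choosing $r/R$ small. The paper avoids both issues by proving (i) through Lemma~\ref{lemma:pointwise 0}: it uses the increasing approximation by continuous weak supersolutions $u_j\leq u$ and the local boundedness estimate applied to the subsolutions $-u_j$, so that no pointwise comparison on a boundary sphere is ever needed.
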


As the property (iv) of the Theorem above states, the $(s,p)$-superharmonic functions are very much connected to fractional weak supersolutions, which by the definition belong locally to the Sobolev space~$W^{s,p}$ (see Section~\ref{sec_preliminaries}). Consequently, we prove very general results for the {supersolutions}~$u$ to~\eqref{lequazione}, as e.~\!g. the natural comparison principle given in forthcoming Lemma~\ref{comp principle} which takes into account what happens outside~$\Om$, the lower semicontinuity of~$u$ (see Theorem~\ref{lsc representative}), the fact that the truncation of a supersolution is a supersolution as well (see Theorem~\ref{min(u,k)}), the pointwise convergence of sequences of supersolutions (Theorem~\ref{lemma:conv supersolution}). Clearly, the aforementioned results are expected, but further efforts and a somewhat new approach to the corresponding proofs are needed due to the nonlocal nonlinear framework considered here (see the observations at the beginning of Section~\ref{subs_nlnl} below).

\vs\vs
As said before, for 
 the nonlocal Perron method the $(s,p)$-superharmonic and $(s,p)$-subharmonic functions are the building blocks. We are now in a position to introduce this concept.

\subsection{Dirichlet boundary value problems}
As in the classical local framework, in order to solve the boundary value problem, we have to construct two classes of functions leading to the upper Perron solution and the lower Perron solution.
\begin{definition} [\bf Perron solutions] \label{perron sol} 
Let $\Omega$ be an open set. Assume that $g \in L^{p-1}_{sp}(\R^n)$. The upper class $\mathcal U_g$ of $g$ consists of all functions $u$ such that
\begin{itemize}
\item[(i)] $u$ is $(s,p)$-superharmonic in $\Omega$, \vs
\item[(ii)] $u$ is bounded from below in $\Omega$, \vs
\item[(iii)] $\displaystyle{\liminf_{\Omega \ni y \to x}u(y) \geq \esslimsup_{\R^n \setminus \Omega \ni y \to x}}\,g(y)$ for all $x \in \partial\Omega$,\vs
\item[(iv)] $u = g$ almost everywhere in $\R^n \setminus \Omega$.\vs
\end{itemize}
The lower class is $\mathcal L_g := \{ u \, : \, -u \in  \mathcal U_{-g}\}$.  
The function $\overline H_g := \inf\left\{u : u \in \mathcal U_g\right\}$ is the {\it upper Perron solution} with boundary datum~$g$ in $\Omega$,
where the infimum is taken pointwise in $\Omega$, and $\underline H_g := \sup\left\{u : u \in \mathcal L_g\right\}$ is the {\it lower Perron solution} with boundary datum~$g$ in $\Omega$.
\end{definition}
A few important observations are in order.
\begin{remark}
Notice that when $g$ is continuous in a vicinity of the boundary of $\Omega$, we can replace $\esslimsup_{y \to x}g(y)$ with $g(x)$ in Definition \ref{perron sol}(iii) above.
\end{remark}
\begin{remark} \label{perron general}
We could also consider more general Perron solutions by dropping the conditions (ii)--(iii) in Definition~\ref{perron sol} above.
However, in such a case it does not seem easy to exclude the possibility that the corresponding upper Perron solution is identically $-\infty$ in $\Omega$ even for simple boundary value functions such as constants. 
\end{remark}
In the case of the fractional Laplacian, we have the Poisson formula for the solution $u$ in a unit ball with boundary values $g$ as
\begin{align*} 
u(x)=c_{n,s}\left(1-|x|^2\right)^s \int_{\R^n \setminus B_1(0)}g(y)\left(|y|^2-1\right)^{-s}|x-y|^{-n}\dy,
\end{align*}
for every $x \in B_1(0)$; see e.~\!g. \cite{Kas11}, and also~\cite{SV14,MRS16} for related applications, and~\cite{Dyd12} for explicit computations. Using the Poisson formula one can consider simple examples in the unit ball.
\begin{example}
Taking the function $g(x) = \big||x|^2-1\big|^{s-1}$, $g \in L^1_{2s}(\R^n)$, as boundary values in the Poisson formula above, the integral does not converge. This example suggests that in this case $\overline H_g \equiv \underline H_g \equiv +\infty$ in $B_1(0)$. The example tells that if the boundary values $g$ merely belong to $L^1_{2s}(\R^n)$, we cannot, in general, expect to find reasonable solutions.
\end{example}

\begin{example}
Let us consider the previous example with $g$ reflected to the negative side in the half space, i.~\!e. 
\begin{equation*}
g(x) :=  \begin{cases}
 \left||x|^2-1\right|^{s-1}, & x_n>0 ,   \\[0.5ex]
 0, & x_n = 0,  \\[0.5ex]
 -\left||x|^2-1\right|^{s-1}, & x_n<0.
\end{cases}
\end{equation*}
Then the ``solution'' via Poisson formula, for $x\in B_1$, is 
\begin{equation*}
u(x) =  \begin{cases}
+ \infty & x_n>0 ,   \\[0.1ex]
 0, & x_n = 0,  \\[0.1ex]
 -\infty , & x_n<0,
\end{cases}
\end{equation*}
which is suggesting that we should now have $\overline{H}_g  \equiv + \infty$ and $\underline{H}_g  \equiv - \infty$ in $B_1(0)$. In view of this example it is reasonable to conjecture that the resolutivity fails in the class $L^1_{2s}(\R^n)$. 
\end{example}

In accordance with the classical Perron theory, one can prove that the upper and lower {\it nonlocal\,} Perron solutions act in the expected order (see Lemma~\ref{perron order}), and that the boundedness of the boundary values assures that the nonlocal Perron classes are non-empty (see Lemma~\ref{l.f bnd}). Then, we prove one of the main results, which is the nonlocal counterpart of the fundamental alternative theorem for the classical nonlinear Potential Theory.
\begin{theorem} \label{thm:Perron}
The Perron solutions $\overline H_g$ and $\underline H_g$ can be either identically $+\infty$ in $\Omega$, identically $-\infty$ in $\Omega$, or $(s,p)$-harmonic in $\Omega$, respectively. 
\end{theorem}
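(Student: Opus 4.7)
My plan is to follow the classical template of Perron theory, working in each connected component $\Omega_0$ of $\Omega$ separately and establishing the trichotomy for $\overline{H}_g$; the statement for $\underline{H}_g$ then follows by applying the same argument to $-g$. Assume $\overline{H}_g\not\equiv +\infty$ in $\Omega_0$, so that the infimum is finite at some point. The first reduction is to realize $\overline{H}_g$ as the pointwise limit of a monotone decreasing sequence $\{u_k\}\subset\mathcal{U}_g$ on a countable dense subset $\{x_j\}\subset\Omega_0$. Since $\mathcal{U}_g$ is closed under finite pointwise minima (Remark~\ref{remark:min superharmonic}, together with the obvious persistence of the Perron boundary conditions (ii)--(iv) under minima), I pick functions in $\mathcal{U}_g$ realizing $\overline{H}_g(x_j)$ in the limit at each $x_j$ and take successive minima to obtain such a sequence.

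The main technical tool is the \emph{Poisson modification} in a ball $B\Subset\Omega_0$. Given $u\in\mathcal{U}_g$, first truncate from above to $u_M:=\min\{u,M\}$, which remains in $\mathcal{U}_g$ by Theorem~\ref{min(u,k)} and is locally bounded in $\Omega$, hence a weak supersolution by Theorem~\ref{thm:superharmonic}(iv). Then solve the Dirichlet problem in $B$ with exterior datum $u_M|_{\R^n\setminus B}$, obtaining a continuous weak solution in $B$, which I extend by $u_M$ outside $B$ to define $P_B u_M$. By the comparison principle of Lemma~\ref{comp principle}, $P_B u_M\leq u_M$, and one verifies all four axioms of Definition~\ref{def_superharmonic} for the glued function: lower semicontinuity follows from continuity on $\overline B$ and the lsc of $u_M$ elsewhere; the tail-space integrability of the negative part is inherited from $u$; and unbounded comparison on $D\Subset\Omega$ is obtained by splitting $D$ along $\partial B$ and applying Lemma~\ref{comp principle} on $D\cap B$ and $D\setminus\overline B$ separately. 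Passing to the limit $M\to\infty$ via Theorem~\ref{lemma:conv supersolution} yields $P_B u\in\mathcal{U}_g$, $(s,p)$-harmonic in $B$, with $P_B u\leq u$ throughout.

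Applying $P_B$ to the decreasing sequence $\{u_k\}$ produces a decreasing sequence $\tilde{u}_k:=P_B u_k\in\mathcal{U}_g$, each $(s,p)$-harmonic in $B$, with $\overline{H}_g\leq\tilde{u}_k\leq u_k$. Its monotone limit $v$ satisfies $v(x_j)=\overline{H}_g(x_j)$ on the dense set, and a Harnack-type convergence statement for $(s,p)$-harmonic functions---extractable from the Caccioppoli and weak Harnack estimates of Section~\ref{sec_preliminaries} together with Theorem~\ref{lemma:conv supersolution} and uniform tail control on $B$---forces $v$ to be either identically $-\infty$ in $B$ or $(s,p)$-harmonic in $B$, in which latter case $v=\overline{H}_g$ on $B$ by density and the pointwise characterization in Theorem~\ref{thm:superharmonic}(i). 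Since this dichotomy holds on every ball $B\Subset\Omega_0$, the sets $\{\overline{H}_g\equiv -\infty\}$ and $\{\overline{H}_g\text{ is }(s,p)\text{-harmonic}\}$ are both open, and connectedness of $\Omega_0$ yields the claimed trichotomy.

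The hardest step will be the Poisson modification: verifying the unbounded comparison axiom of Definition~\ref{def_superharmonic}(iii) for the glued function across $\partial B$, and controlling the nonlocal tail uniformly as $M\to\infty$ so that the limit $P_B u$ still lies in $\mathcal{U}_g$, are both delicate and hinge crucially on the quantitative comparison principle of Lemma~\ref{comp principle} and on the pointwise supersolution-convergence theorem. The Harnack-type convergence alternative in the third paragraph is the other substantive ingredient but is a comparatively standard consequence of the regularity theory already developed in the earlier sections of the paper.
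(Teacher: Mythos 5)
Your skeleton (a decreasing sequence realizing the infimum, Poisson modification on regular subsets, a Harnack-type convergence alternative) is the same as the paper's, but the endgame is not, and this is where the first genuine gap lies. You establish the alternative only on each connected component $\Omega_0$ and then conclude by openness plus connectedness. The theorem, however, asserts the alternative in all of $\Omega$ with no connectedness assumption, and the paper emphasizes (see the remark before Lemma~\ref{l.increasing}) that this is a genuinely nonlocal phenomenon, in contrast with the local theory. In the paper's proof the propagation of the value $-\infty$ is not topological: if the limit of Poisson modifications is identically $-\infty$ in one regular set $D$, then comparison with the modifications on any larger regular set $U\supset D$ (Lemma~\ref{poisson monotone set}) together with Harnack's convergence theorem (Theorem~\ref{harnack conv}), whose dichotomy holds on disconnected sets thanks to the tail summability of Theorem~\ref{t.summability}, forces $\overline H_g\equiv-\infty$ in the whole $\Omega$. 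Your argument would allow $\overline H_g\equiv-\infty$ on one component and $(s,p)$-harmonic on another, which the statement excludes; the connectedness route therefore proves a strictly weaker result.

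Two further steps do not hold as written. First, the identification $v=\overline H_g$ in $B$: you know $\overline H_g\le v$, that $v$ is continuous, and that the two functions agree on a countable dense set, but the pointwise infimum of an uncountable family of lower semicontinuous functions may drop below $v$ off that dense set; invoking Theorem~\ref{thm:superharmonic}(i) for $\overline H_g$ is circular, since $\overline H_g$ is not yet known to be $(s,p)$-superharmonic. The paper closes exactly this hole with Choquet's topological lemma and lower semicontinuous regularizations (choosing the sequence so that $\widehat u=\widehat H_g$ and then sandwiching $\widehat H_g\le\widehat v_D\le\widehat u$); your version would need an analogous extra argument (for instance, taking the minimum with an arbitrary competitor $w\in\mathcal U_g$, Poisson-modifying, and comparing two continuous limits that agree on a dense set). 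Second, the Poisson modification itself: $\min\{u,M\}$ does not remain in $\mathcal U_g$ when $g$ is unbounded above, since condition (iv) of Definition~\ref{perron sol} is violated outside $\Omega$; and verifying Definition~\ref{def_superharmonic}(iii) for the glued function by splitting a comparison set $D$ along $\partial B$ and applying Lemma~\ref{comp principle} separately on $D\cap B$ and $D\setminus\overline B$ is circular, because each piece requires the not-yet-established ordering on the other piece as part of its exterior data. The paper instead constructs $P_{u,B}$ from an increasing sequence of continuous weak supersolutions produced by the obstacle problem (Lemma~\ref{lem_approximation2}, Theorem~\ref{thm:cont up to bdry}) and proves its $(s,p)$-superharmonicity via the general comparison principle, Theorem~\ref{thm:comparison}; this construction also supplies the existence and continuity in $B$ of solutions with merely lower semicontinuous exterior data, which your sketch takes for granted.
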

Finally, we approach the problem of resolutivity in the nonlocal framework. We state and prove a basic, hopefully useful, existence and regularity result for the solution to the nonlocal Dirichlet boundary value problem, under suitable assumptions on the boundary values and the domain~$\Omega$ (see Theorem~\ref{thm:basic Dir}). We then show  that if there is a solution to the nonlocal Dirichlet problem then it is necessarily the nonlocal Perron solution (see Lemma~\ref{sortofresol}).

\subsection{Conclusion}\label{subs_nlnl}
As one can expect, the main issues when dealing with the wide class of operators~$\mathcal{L}$ in~\eqref{lequazione} whose kernel $K$ satisfies fractional differentiability  {\it for any} $s\in (0,1)$ and $p$-summability   {\it for any} $p>1$, lie in their very definition, which combines the typical issues given by its nonlocal feature together with the ones given by its nonlinear growth behavior; also, further efforts are needed due to the presence of merely measurable coefficient in the kernel~$K$.  As a consequence, we can make use neither of some very important results recently introduced in the nonlocal theory, as the by-now classical $s$-harmonic extension framework provided by  Caffarelli and Silvestre  in~\cite{CS07}, nor of various tools as, e.~\!g., 
the strong three-commutators estimates introduced in~\cite{DLR11,DLR11b}  to deduce the regularity of weak fractional harmonic maps (see also~\cite{Sch14}),
the strong barriers and density estimates in~\cite{PSV13,SV14}, 
the pseudo-differential commutator and energy estimates in~\cite{PP13,PP15},
 and many other successful techniques which seem not to be trivially adaptable to the nonlinear framework considered here. Increased difficulties are due to the non-Hilbertian structure of the involved fractional Sobolev spaces~$W^{s,p}$ when $p$ is different than 2. 
 
Although some of our complementary results are well-known in the linear nonlocal case, i.e. when $\mathcal{L}$ reduces to the pure fractional Laplacian operator $(-\Delta)^s$, all our proofs are new even in this case.  Indeed, since we actually deal with very general operators with measurable coefficients, we have to change the approach to the problem. As a concrete example, for instance, let us mention that the proof that the supersolutions can be chosen to be lower semicontinuous functions will  follow by a careful interpolation of the local and the nonlocal contributions via a recent supremum estimate with tail (see Theorem~\ref{thm_local}). On the contrary, in the purely fractional Laplacian case when~$p=2$, the  proof of the same result is simply based on a characterization of supersolutions somewhat similar to the super mean value formula for classical superharmonic functions (see, e.~\!g., \cite[Proposition A4]{Sal12}), which is not available in our general nonlinear nonlocal framework due to the presence of possible irregular coefficients in the kernel~$K$. While in the purely (local) case when $s=1$, for the $p$-Laplace equation, the same result is a consequence of weak Harnack estimates (see, e.~\!g., \cite[Theorem 3.51-3.63]{HKM06}).

All in all, in our opinion, the contribution in the present paper is twofold. We introduce the nonlocal counterpart of the Perron method, by also introducing the concept of $(s,p)$-superharmonic functions, and extending very general results for supersolutions to the nonlocal Dirichlet problem in~\eqref{lequazione}, hence establishing a powerful framework which could be useful for developing a complete fractional nonlinear Potential Theory; in this respect, we could already refer to the forthcoming papers~\cite{KKP15,KKP16,KKL16}, where all the machinery, and in particular the {\it good} definition of fractional superharmonic functions developed here, have been required in order to deal with the nonlocal obstacle problem as well as to investigate different notions of solutions to nonlinear fractional equations of $p$-Laplace type.
Moreover, since we derive all those results for a general class of nonlinear integro-differential operators with measurable coefficients via our approach by also taking into account the nonlocal tail contributions, 
we obtain alternative proofs that are new even in the by-now classical case of the pure fractional Laplacian operator $(-\Delta)^s$.\vs


The paper is organized as follows. Firstly, an effort has been made to keep the presentation self-contained, so that   in  Section~\ref{sec_preliminaries} we collect some preliminary observations, and very recent results for fractional weak supersolutions adapted to our framework. In Section~\ref{sec_most}, we present some independent general results to be applied here and elsewhere when dealing with nonlocal nonlinear operators (Section~\ref{sec_apriori}), and we state and prove the most essential properties of fractional weak supersolutions (Sections~\ref{sec_comparison}--\ref{sec_convergence}).
Section~\ref{sec_superharmonic} is devoted to the concept of $(s,p)$-superharmonic functions: we prove Theorem~\ref{thm:superharmonic} and other related results, by also investigating their connection to the fractional weak supersolutions. Finally, in Section~\ref{sec_perrons} we focus on the nonlocal Dirichlet boundary value problems and collect some useful tools, introducing the natural nonlocal Poisson modification (Section~\ref{sec_poisson}), as well as the nonlocal Perron method, by proving the corresponding properties and the main related results as the ones in Theorem~\ref{thm:Perron} and the resolutivity presented in forthcoming Lemma~\ref{sortofresol}; see  Section~\ref{sec_perron}.

\noindent
\\ {\it Acknowledgments}. 
This paper was partially carried out while Giampiero Palatucci was visiting the Department of Mathematics and Systems Analysis at Aalto University School of Science in Helsinki,
supported by the Academy of Finland. The authors would like to thank Professor Juha Kinnunen for the hospitality and the stimulating discussions. A special thank also to Agnese Di~Castro for her useful observations on a preliminary version of this paper. 

Finally, we would like to thank Erik Lindgren, who has kindly informed us of his paper~\cite{LL16} in collaboration with Peter Lindqvist, where they deal with
a general class of fractional Laplace equations with bounded boundary data,
in the case when the  operators~$\mathcal{L}$ in~\eqref{lequazione} does reduce to the pure fractional $p$-Laplacian~$(-\Delta)^s_p$ without coefficients.  This very relevant paper contains several important results, as a fractional Perron method and a Wiener resolutivity theorem, together with the subsequent classification of the regular points, in such a nonlinear fractional framework. It could be interesting to compare those results together with the ones presented here.

\vs\section{Preliminaries}\label{sec_preliminaries}

In this section, we state the general assumptions on the quantities we are dealing with. We keep these assumptions throughout the paper.

First of all, we recall that the class of integro-differential equations in which we are interested is the following
\begin{equation}\label{problema}
\mathcal{L}u(x)={\rm P.V.}\int_{\mathds{R}^n} K(x,y)|u(x)-u(y)|^{p-2}\big(u(x)-u(y)\big)\,{\rm d}y = 0, \quad x\in\Omega. 
\end{equation}
The nonlocal operator $\mathcal{L}$ in the display above (being read a priori in the principal value sense) is driven by its {\it kernel} $K\colon \R^n\times \R^n \to [0,\infty)$, which is a measurable function  satisfying the following property:
\begin{equation}\label{hp_k}
\Lambda^{-1} \leq K(x,y)|x-y|^{n+sp} \leq \Lambda \quad \text{for a.~\!e. } x, y \in \R^n,
\end{equation}
for some $s\in (0,1)$, $p>1$, $\Lambda \geq1$. We immediately notice that in the special case when $p=2$ and $\Lambda=1$ we recover (up to a multiplicative constant) the well-known fractional Laplacian operator~$(-\Delta)^s$.

Moreover, notice that the assumption on $K$ can be weakened as follows
\begin{equation}\label{hp_k1}
\Lambda^{-1} \leq K(x,y)|x-y|^{n+sp} \leq \Lambda \quad \text{for a.~\!e. } x, y \in \R^n  \text{ s.~\!t. } |x-y| \leq 1,
\end{equation}
\begin{equation}\label{hp_k2}
0\leq K(x,y)|x-y|^{n+\eta} \leq M \quad \text{for a.~\!e. } x, y \in \R^n \text{ s.~\!t. }  |x-y| > 1,
\end{equation}
for some $s$, $p$, $\Lambda$ as above, $\eta>0$ and $M\geq 1$, as seen, e.~\!g., in the recent series of papers by Kassmann (see for instance the more general assumptions in the breakthrough paper~\cite{Kas11}). In the same sake of generalizing, one can also consider the operator $\mathcal{L}=\mathcal{L}_{\Phi}$ defined by
\begin{equation}\label{hp_k3}
\mathcal{L}_{\Phi}u(x) = {\rm P.V.}\int_{\R^n} K(x,y)\Phi(u(x)-u(y))\dy, \quad x\in\Omega, 
\end{equation}
where the real function $\Phi$ is assumed to be continuous, satisfying $\Phi(0)=0$ together with the monotonicity property
$$
\lambda^{-1}|t|^p \leq \Phi(t)t \leq \lambda|t|^p \quad \text{for every } t \in \R\setminus \{0\},
$$
for some $\lambda>1$, and some $p$ as above (see, for instance,~\cite{KMS15}).
However, for the sake of simplicity, we will take $\Phi(t)=|t|^{p-2}t$ and we will
work under the assumption in~\eqref{hp_k}. 

\vspace{2mm}

We now call up the definition of {\it the nonlocal  tail \,{\rm{Tail}$(f; z, r)$} of a function $f$ in the ball of radius $r>0$ centered in $z\in \R^n$}. We have
\begin{equation} \label{def_tail} 
{\rm Tail}(f;z,r) := \bigg(r^{sp} \int_{\R^n \setminus B_r(z)} |f(x)|^{p-1} |x-z|^{-n-sp} \dx \bigg)^{\frac{1}{p-1}},
\end{equation}
for any function $f$ initially defined in $L^{p-1}_{\textrm{loc}}(\R^n)$. As mentioned in the introduction, this quantity will play an important role in the rest of the paper. The nonlocal tail has been introduced in~\cite{DKP15}, and, as seen subsequently in several recent papers (see e.~\!g.,~\cite{BL15,BPS15, DKP14,HRS15,KMS15,KMS15b,IS14,IMS16} and many others\footnote{
When needed, our definition of Tail can also be given in a more general way by replacing the ball~$B_r$ and the corresponding~$r^{sp}$ term by an open bounded set~$E\subset\R^n$ and its rescaled measure~$|E|^{sp/n}$, respectively. This is not the case in the present paper.
}), it has been crucial in order to control in a quantifiable way the long-range interactions which naturally appear when dealing with nonlocal operators of the type considered here in~\eqref{problema}. In the following, when the center point $z$ will be clear from the context, we shall use the shorter notation \, Tail$(f; r)\equiv$ Tail$(f; z, r)$.
In accordance with~\eqref{def_tail}, we recall the definition of the tail space~$L^{p-1}_{sp}$ given in~\eqref{def_tailspace}, and we immediately notice that one can use the following equivalent definition 
\begin{equation*} 
L^{p-1}_{sp}(\R^n) = \Big\{ f \in L_{\rm loc}^{p-1}(\R^n) \; : \;   \int_{\R^n} |f(x)|^{p-1} (1+|x|)^{-n-sp} \dx < \infty \Big\}.
\end{equation*}

As expected, one can check that $L^\infty(\R^n) \subset L^{p-1}_{sp}(\R^n)$ and $W^{s,p}(\R^n) \subset L^{p-1}_{sp}(\R^n)$, where we denoted by $W^{s,p}(\R^n)$ the usual fractional Sobolev space of differentiability order $s\in (0,1)$ and summability exponent $p\geq1$, which is defined as follows
$$
W^{s,p}(\mathds{R}^n):=\left\{ v\in L^{p}(\mathds{R}^n)\,:\, \frac{|v(x)-v(y)|}{|x-y|^{\frac np+s}}\in L^p(\mathds{R}^n\times \mathds{R}^n)\right\};
$$ 
i.~\!e., an intermediary Banach space between $L^p(\mathds{R}^n)$ and $W^{1,p}(\mathds{R}^n)$ endowed with the natural norm
\begin{align*}
\|v\|_{W^{s,p}(\mathds{R}^n)}
& :=  \|v\|_{L^p(\R^n)}
+ [v]_{{W}^{s,p}(\R^n)}
 \nonumber \\
&\,\, =
\left(\int_{\mathds{R}^n} |v|^p\,{\rm d}x \right)^{\frac1 p}+ \left(\int_{\mathds{R}^n}\int_{\mathds{R}^n}\frac{|v(x)-v(y)|^p}{|x-y|^{n+sp}}\,{\rm d}x{\rm d}y\right)^{\frac1 p}.
\end{align*}
In a similar way, it is possible to define the fractional Sobolev spaces $W^{s,p}(\Omega)$ in a domain $\Omega \subset\mathds{R}^n$. By $W_0^{s,p}(\Omega)$ we denote the closure of $C_0^\infty(\Omega)$ in $W^{s,p}(\R^n)$. Conversely, if $v \in W^{s,p}(\Omega')$ with $\Omega \Subset \Omega'$ and $v=0$ outside of $\Omega$ almost everywhere, then $v$ has a representative in $W_0^{s,p}(\Omega)$ as well. 
For the basic properties of these spaces and some related topics we refer to \cite{DPV12} and the references therein. 

\vspace{2mm}

Let us denote the positive part and the negative one of a real valued function $u$  by $u_+:=\max\{u,0\}$ and $u_-:=\max\{-u,0\}$, respectively. We are now ready to provide the definitions of sub- and supersolutions $u$ to the class of 
integro-differential problems we are interested in.
\begin{definition}\label{def_supersolution}
A function $u \in W^{s,p}_{\rm{loc}}(\Omega)$ such that $u_-$ belongs $L^{p-1}_{sp}(\R^n)$ is a {\it  fractional weak $p$-supersolution} of~$\eqref{problema}$ if
\begin{align} \label{supersolution} 
\int_{\R^n} \int_{\R^n} |u(x)-u(y)|^{p-2}\big(u(x)-u(y)\big)\big(\eta(x)-\eta(y)\big) K(x,y) \dxy \ge 0
\end{align}
for every nonnegative $\eta \in C^\infty_0(\Omega)$. A function $u$ is a {\it fractional weak  $p$-subsolution} if $-u$ is a  fractional weak $p$-supersolution, and $u$ is a {\it fractional weak $p$-solution} if it is both fractional weak  $p$-sub- and $p$-supersolution.
\end{definition}
We often suppress $p$ from notation and say simply that $u$ is a weak supersolution in $\Omega$. Above $\eta \in C^\infty_0(\Omega)$ can be replaced by $\eta \in W^{s,p}_0(D)$ with every
$D \Subset \Omega$. Furthermore, it can be extended to a $W^{s,p}$-function in the whole $\R^n$ (see, e.~\!g., Section 5 in~\cite{DPV12}). 
Let us remark that we will assume that the kernel~$K$ is symmetric, which is not restrictive, in view of the weak formulation presented in Definition~\ref{def_supersolution}, since one may always define the corresponding symmetric kernel $K_{\textrm{\tiny sym}}$ given by
$$
K_{\textrm{\tiny sym}}(x,y):=\frac1{2}\Big(K(x,y)+K(y,x)\Big).
$$ 

It is worth noticing that the summability assumption of $u_-$ belonging to the tail space $L^{p-1}_{sp}(\R^n)$ is what one expects in the  nonlocal framework considered here. This is one of the novelty with respect to the analog of the definition of supersolutions in the local case, say when $s=1$, and it is necessary since here one has to use in a precise way the definition in~\eqref{def_tail} to deal with the long-range interactions; see  Remark~\ref{rem_superdef} below, and also, the regularity estimates in the aforementioned papers~\cite{DKP14,DKP15,KKP15, KMS15}. It is also worth noticing that in Definition~\ref{def_supersolution}
 it makes no difference to assume $u \in L^{p-1}_{sp}(\R^{n})$
instead of $u_- \in L^{p-1}_{sp}(\R^{n})$, as the next lemma implies. 

\begin{lemma} \label{l.tail in control}
Let $u$ be a weak supersolution in $B_{2r}(x_0)$. Then, for $c \equiv c(n,p,s)$,
\begin{eqnarray*}  
&&\hspace{-5mm}{\rm Tail}(u ; x_0,r) \\*
&&\quad  \leq  c \left( r^{\frac{sp-1-n}{p-1}} \left[u\right]_{W^{h,p-1}(B_{r}(x_0))} + r^{-\frac{n}{p-1}}\left\| u \right\|_{L^{p-1}(B_r(x_0))}  + {\rm Tail}(u_-;x_0,r)   \right)
\end{eqnarray*}
with
\begin{equation*} \label{e.weird q}
h = \max \left\{ 0, \frac{sp-1}{p-1}\right\} < s.
\end{equation*}
In particular, if $u$ is a weak supersolution in an open set $\Omega$, then $u \in L_{sp}^{p-1}(\R^n)$.
\end{lemma}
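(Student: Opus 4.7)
Since the estimate already contains ${\rm Tail}(u_-;x_0,r)$, it suffices to bound ${\rm Tail}(u_+;x_0,r)$ by testing~\eqref{supersolution} against a suitable nonnegative cutoff near $x_0$ and extracting a multiple of $\int_{\R^n\setminus B_r(x_0)} u_+(y)^{p-1}|x_0-y|^{-n-sp}\dy$ from the cross interactions between the support of the cutoff and the complement of $B_r(x_0)$. Concretely, I would fix $\eta\in C_0^\infty(B_r(x_0))$ with $0\leq\eta\leq 1$, $\eta\equiv 1$ on $B_{r/2}(x_0)$, $\supp\eta\subset B_{3r/4}(x_0)$ and $|\nabla\eta|\leq C/r$, and split the weak supersolution integral, using the symmetry of $K$ and the vanishing of $\eta$ outside $B_r(x_0)$, into a ``local'' part $I_{\rm loc}$ on $B_r(x_0)\times B_r(x_0)$ (carrying both $\eta(x)$ and $\eta(y)$) and a symmetrized ``cross'' part $2 I_{\rm cr}$ on $B_r(x_0)\times(\R^n\setminus B_r(x_0))$ in which only $\eta(x)$ survives; the test function identity reads $0\leq I_{\rm loc}+2 I_{\rm cr}$, equivalently $-2I_{\rm cr}\leq I_{\rm loc}$.

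The lower bound on $-I_{\rm cr}$ rests on the elementary pointwise inequality
\[
|a-b|^{p-2}(a-b)\;\geq\;2^{1-p}\,a_+^{p-1}-c_p\bigl(|b|^{p-1}+a_-^{p-1}\bigr),\qquad a,b\in\R,
\]
which I would verify by separately inspecting the cases $a\geq 2|b|\geq 0$, $0\leq a<2|b|$ and $a<0$; applied with $a=u(y)$ and $b=u(x)$ it produces the lower bound for the integrand of $-I_{\rm cr}$. Using the geometric comparison $|x-y|\asymp|x_0-y|$ for $x\in\supp\eta$ and $y\in\R^n\setminus B_r(x_0)$ together with the two-sided kernel bound $K(x,y)\asymp|x-y|^{-n-sp}$, and restricting the main term to $x\in B_{r/2}(x_0)$ where $\eta\equiv 1$, this yields
\[
-I_{\rm cr}\;\geq\;c\,r^{n-sp}\,{\rm Tail}(u_+;x_0,r)^{p-1}-C\,r^{-sp}\|u\|_{L^{p-1}(B_r(x_0))}^{p-1}-C\,r^{n-sp}\,{\rm Tail}(u_-;x_0,r)^{p-1},
\]
where the $|u(x)|^{p-1}$-error uses $\int_{\R^n\setminus B_r(x_0)}K(x,y)\dy\lesssim r^{-sp}$ uniformly for $x\in\supp\eta$, and the $u_-$-error uses $K(x,y)\lesssim|x_0-y|^{-n-sp}$ together with $\int_{B_r(x_0)}\eta\dx\lesssim r^n$.

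For the local part, using $|\eta(x)-\eta(y)|\leq C|x-y|/r$ and $K(x,y)\leq\Lambda|x-y|^{-n-sp}$,
\[
|I_{\rm loc}|\leq\frac{C}{r}\int_{B_r(x_0)}\int_{B_r(x_0)}|u(x)-u(y)|^{p-1}|x-y|^{1-n-sp}\dxy.
\]
When $sp>1$ the exponent $1-n-sp$ equals $-n-h(p-1)$ with $h=(sp-1)/(p-1)$, so the right-hand side is exactly $(C/r)[u]_{W^{h,p-1}(B_r(x_0))}^{p-1}$; in the borderline case $sp\leq 1$ one has $h=0$ and the factor $|x-y|^{1-sp}$ is benign, so applying $|u(x)-u(y)|^{p-1}\leq c_p(|u(x)|^{p-1}+|u(y)|^{p-1})$ together with $\int_{B_r(x_0)}|x-y|^{1-n-sp}\dy\lesssim r^{1-sp}$ gives $|I_{\rm loc}|\leq Cr^{-sp}\|u\|_{L^{p-1}(B_r(x_0))}^{p-1}$, consistently absorbing the Gagliardo contribution into the $L^{p-1}$-term. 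Plugging these bounds into $-2I_{\rm cr}\leq I_{\rm loc}$, dividing by $r^{n-sp}$ and taking $(p-1)$-th roots yields the stated estimate for ${\rm Tail}(u_+;x_0,r)$, and hence for ${\rm Tail}(u;x_0,r)$ via ${\rm Tail}(u;x_0,r)\lesssim{\rm Tail}(u_+;x_0,r)+{\rm Tail}(u_-;x_0,r)$.

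For the final assertion, if $u$ is a weak supersolution in $\Omega$, then by Definition~\ref{def_supersolution} one has $u\in W^{s,p}_{\rm loc}(\Omega)\subset L^{p-1}_{\rm loc}(\Omega)$ and $u_-\in L^{p-1}_{sp}(\R^n)$; applying the just-proved inequality on any $B_{2r}(x_0)\Subset\Omega$ (noting that $[u]_{W^{h,p-1}(B_r(x_0))}$ is finite by H\"older from the $W^{s,p}$-bound when $sp>1$, and trivially otherwise) gives ${\rm Tail}(u;x_0,r)<\infty$, hence $u\in L^{p-1}_{sp}(\R^n)$. The main obstacle in carrying out the plan is engineering the elementary inequality so that the resulting error terms depend only on $|u(x)|^{p-1}$ and $u_-(y)^{p-1}$---no $u_+$-dependence that would later need to be reabsorbed into the left-hand side---and, as a secondary issue, treating the borderline case $sp\leq 1$ in a way consistent with the declared exponent $h=0$.
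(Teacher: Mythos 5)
Your proposal is essentially the paper's own proof: test the weak formulation with a cutoff supported near $x_0$, split into a local part controlled via $|\eta(x)-\eta(y)|\lesssim |x-y|/r$ by the $W^{h,p-1}$-seminorm (or the $L^{p-1}$-norm) and a cross part where a pointwise inequality of exactly the type you state extracts $-c\,r^{n-sp}{\rm Tail}(u_+;x_0,r)^{p-1}$ with errors $r^{-sp}\|u\|^{p-1}_{L^{p-1}(B_r)}$ and $r^{n-sp}{\rm Tail}(u_-;x_0,r)^{p-1}$, after which one divides by $r^{n-sp}$ and takes $(p-1)$-th roots; the final tail-space claim follows by H\"older as in the paper. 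The only slip is the endpoint $sp=1$, where your bound $\int_{B_r(x_0)}|x-y|^{1-n-sp}\,{\rm d}y\lesssim r^{1-sp}$ fails logarithmically; but there $1-n-sp=-n-h(p-1)$ with $h=0$, so your ``$sp>1$'' computation applies verbatim and should simply be used for all $sp\geq 1$.
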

\begin{proof}
Firstly, we write the weak formulation, for nonnegative $\phi \in C_0^\infty(B_{r/2}(x_0))$ such that $\phi \equiv 1$ in $B_{r/4}(x_0)$, with $0 \leq \phi \leq 1$ and $|\nabla \phi| \leq 8/r$. We have
\begin{align*}  
0 & \leq \int_{B_{r}(x_0)} \int_{B_r(x_0)} |u(x)-u(y)|^{p-2} \big(u(x)-u(y)\big)\big(\phi(x)- \phi(y)\big) K(x,y) \dxy
\\* & \quad + \int_{\R^n \setminus B_r(x_0)} \int_{B_{r/2}(x_0)} |u(x)-u(y)|^{p-2} \big(u(x)-u(y)\big) \phi(x)  K(x,y) \dxy 
\\* & = I_1 + I_2.
\end{align*}
The first term is easily estimated using $|\phi(x) - \phi(y)| \leq 8|x-y|/r$ as
\begin{align*}  
I_1 \leq \frac{c}{r^{\min\{sp,1\}}} \left[u\right]_{W^{h,p-1}(B_{r}(x_0))}^{p-1}
\end{align*}
In order to estimate the second term, we have 
\begin{align*}  
|u(x)-u(y)|^{p-2} \big(u(x)-u(y)\big) & \leq 2^{p-1}\big( u_+^{p-1}(x)  + u_-^{p-1}(y) \big) - u_+^{p-1}(y),
\end{align*}
and thus 
\begin{equation*}  
I_2 \leq c\, r^{-sp} \left\| u \right\|_{L^{p-1}(B_r(x_0))}^{p-1} + c\, r^{n-sp} {\rm Tail}(u_- ; x_0,r)^{p-1}  - \frac{r^{n-sp}}c {\rm Tail}(u ; x_0,r)^{p-1}.
\end{equation*}
By combining the preceding displays we get the desired estimates. The second statement plainly follows by an application of H\"older's Inequality.
\end{proof}

\begin{remark}\label{rem_superdef}
The left-hand side of the inequality in~\eqref{supersolution} is finite for every $u \in W^{s,p}_{\rm{loc}}(\Omega) \cap L^{p-1}_{sp}(\R^{n})$ and for every $\eta \in C_0^\infty(\Omega)$. Indeed, for an open set $D$ such that
$\supp \eta \subset D \Subset \Omega$, we have by H\"older's Inequality
\begin{align*}\label{remark21}
&\left| \int_{\R^n} \int_{\R^n}|u(x)-u(y)|^{p-2}\big(u(x)-u(y)\big)\big(\eta(x)-\eta(y)\big)K(x,y)\dxy \right| \nonumber\\
&\qquad\le c \int_{D} \int_{D}|u(x)-u(y)|^{p-1}|\eta(x)-\eta(y)|\frac{{\rm d}x{\rm d}y}{|x-y|^{n+sp}} \nonumber\\
&\qquad\quad + c \int_{\R^n \setminus D} \int_{\supp \eta}\big(|u(x)|^{p-1}+|u(y)|^{p-1}\big)|\eta(x)||z-y|^{-n-sp} \dxy \nonumber\\[1ex]
&\qquad\le c\,[u]_{W^{s,p}(D)}^{p-1}[\eta]_{W^{s,p}(D)} + c\,\|u\|_{L^p(D)}^{p-1}\|\eta\|_{L^p(D)} + c\,{\rm Tail}(u;z,r)^{p-1}\|\eta\|_{L^1(D)},
\end{align*}
where $r:=\dist(\supp \eta, \partial D)>0$, $z \in \supp \eta$, and $c\equiv c(n,p,s,\Lambda,r,D)$. We notice that all the terms in the right-hand side  are finite since $u, \eta \in W^{s,p}(D)$ and ${\rm Tail}(u;z,r)<\infty$.
\end{remark}

\vspace{2mm}

\subsection{Algebraic inequalities}
We next collect some elementary algebraic inequalities.  In order to simplify the notation in the weak formulation~\eqref{supersolution}, from now on we denote by
\begin{equation}\label{def_l}
L(a,b):=|a-b|^{p-2}(a-b), \quad a,b \in \R.
\end{equation}
Notice that $L(a,b)$ is increasing with respect to $a$ and decreasing with respect to $b$.

\begin{lemma}{\rm (\!\!\cite[Lemma 2.1-2.2]{KKP15}).}\label{lemmai}
Let $1<p \le 2$ and $a,b,a',b' \in \R$. Then
\begin{align}\label{lemma:1<p<2}
|L(a,b)-L(a',b')| \le 4|a-a'-b+b'|^{p-1}.  
\end{align}
Let $p \ge 2$ and $a,b,a',b' \in \R$. Then 
\begin{align*}
|L(a,b)-L(a',b)| \le c\,|a-a'|^{p-1}+c\,|a-a'||a-b|^{p-2}, 
\end{align*}
and
\begin{align*}
|L(a,b)-L(a,b')| \le c\,|b-b'|^{p-1}+c\,|b-b'||a-b|^{p-2},
\end{align*}
where $c$ depends only on $p$.
\end{lemma}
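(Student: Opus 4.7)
The three inequalities concern the scalar function $f(t):=|t|^{p-2}t$, which already appears in the definition of $L$ through $L(a,b)=f(a-b)$. My plan is to reduce each inequality to a simple regularity property of $f$ and then invoke it with the appropriate arguments.

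For the case $1<p\le 2$, the exponent $q:=p-1$ lies in $(0,1]$, and the target inequality is just the statement that $f$ is globally $(p-1)$-Hölder continuous with constant at most $4$ applied to $t_1:=a-b$ and $t_2:=a'-b'$, since $L(a,b)-L(a',b')=f(t_1)-f(t_2)$ and $t_1-t_2=a-a'-b+b'$. To verify that $|f(t_1)-f(t_2)|\le 4|t_1-t_2|^{p-1}$, I would split into two cases. When $t_1$ and $t_2$ have the same sign, symmetry $f(-t)=-f(t)$ lets me assume $t_1\ge t_2\ge 0$, so $f(t_1)-f(t_2)=t_1^{p-1}-t_2^{p-1}\le (t_1-t_2)^{p-1}$ using concavity of $r\mapsto r^{p-1}$ on $[0,\infty)$. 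When the signs differ, say $t_1>0>t_2$, then $f(t_1)-f(t_2)=t_1^{p-1}+|t_2|^{p-1}$, and I would combine $a^{q}+b^{q}\le 2^{1-q}(a+b)^{q}$ for $q\in(0,1]$ with $|t_1-t_2|=t_1+|t_2|$ to obtain a bound $2^{2-p}|t_1-t_2|^{p-1}\le 2|t_1-t_2|^{p-1}$. In both cases the constant is at most $2<4$.

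For the case $p\ge 2$, note that $f$ is $C^1(\mathbb{R})$ with $f'(t)=(p-1)|t|^{p-2}$. I would apply the fundamental theorem of calculus along the segment joining $a'-b$ to $a-b$:
\begin{equation*}
L(a,b)-L(a',b)=f(a-b)-f(a'-b)=(p-1)(a-a')\int_0^1\bigl|(1-s)(a'-b)+s(a-b)\bigr|^{p-2}\,ds.
\end{equation*}
Since the integrand is a convex combination, the triangle inequality gives $|(1-s)(a'-b)+s(a-b)|\le |a-b|+|a-a'|$, and for $p\ge 2$ we have $(x+y)^{p-2}\le c_p(x^{p-2}+y^{p-2})$ with $c_p=\max\{1,2^{p-3}\}$ (subadditivity for $p-2\in[0,1]$ and the elementary convexity bound for $p-2\ge 1$). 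Multiplying by $(p-1)|a-a'|$ and integrating yields the second inequality. The third inequality follows by the identical scheme, this time joining $a-b'$ to $a-b$ so that the increment becomes $b'-b$ and the upper bound on the integrand becomes $|a-b|+|b-b'|$.

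The main technical point to get right is the convex-combination bound for the integrand in the $p\ge 2$ case: one must avoid the naive estimate $|a'-b|^{p-2}$, which cannot absorb the $|a-a'|^{p-1}$ term cleanly; instead, keeping both endpoints $a-b$ and $a'-b$ under the absolute value and then splitting via $(x+y)^{p-2}\le c_p(x^{p-2}+y^{p-2})$ is what produces the correct two-term right-hand side. The Hölder argument in the case $1<p\le 2$ presents no real difficulty beyond carefully treating the opposite-sign sub-case, which is where the worst constant arises.
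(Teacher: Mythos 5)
Your proof is correct: the reduction to the scalar function $f(t)=|t|^{p-2}t$, the subadditivity/concavity argument giving global $(p-1)$-H\"older continuity when $1<p\le 2$ (your constant $2^{2-p}\le 2$ is even better than the stated $4$), and the fundamental-theorem-of-calculus argument with the splitting $(x+y)^{p-2}\le c_p(x^{p-2}+y^{p-2})$ when $p\ge 2$ all check out, including the endpoint cases $p=2$. Note that the paper does not prove this lemma but cites it from \cite{KKP15}; your argument is essentially the standard proof of that cited result, so there is nothing further to compare.
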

\begin{remark}\label{a-b bounds}
Finally, we would like to make the following observation. In the rest of the paper, we often use the fact that there is a constant $c>0$ depending only on $p$ such that
\begin{align*}
\frac1{c} \le \frac{\big(|a|^{p-2}a-|b|^{p-2}b\big)(a-b)}{(|a|+|b|)^{p-2}(a-b)^{2}} \le c,
\end{align*}  
when $a,b \in \R$, $a \neq b$. In particular,
\begin{align} \label{a-b positive}
\big(|a|^{p-2}a-|b|^{p-2}b\big)(a-b) \geq 0, \quad a,b \in \R.
\end{align} \label{ap-2a-bp-2b}
\end{remark}

\subsection{Some recent results on  nonlocal fractional operators} \label{sec_recent}

In this section, we recall some recent results for  fractional weak sub- and supersolutions, 
which  we adapted  to our framework for the sake of the reader; see~\cite{DKP15,DKP14,KKP15} for the related proofs.
Notice that the proofs of Theorems~\ref{lem_caccio} and~\ref{thm_local} below 
 make sense even if we assume $u \in W^{s,p}_{\rm loc}(\Omega) \cap L^{p-1}_{sp}(\R^{n})$ instead of $u \in W^{s,p}(\R^{n})$.

\vspace{1mm}

Firstly, we state a general inequality which shows that the natural extension of the Caccioppoli inequality to the nonlocal framework has to take into account a suitable  tail. For other fractional Caccioppoli-type inequalities, though not taking into account the tail contribution, see~\cite{Min07,Min11}, and also~\cite{FP14}.
\begin{theorem}[{\bf Caccioppoli estimate with tail}]{\rm(\!\!\cite[Theorem~1.4]{DKP15}).}\label{lem_caccio}
Let $u$ be a weak supersolution to~\eqref{problema}.  Then, for any $B_r\equiv B_r({z})\subset \Omega$ and any nonnegative~$\varphi\in C^\infty_0(B_r)$,  the following estimate holds true
\begin{eqnarray}\label{cacio1}
&& \dys \int_{B_r}\int_{B_r}K(x,y) |w_{-}(x)\varphi(x)-w_{-}(y)\varphi(y)|^p \,{\rm d}x{\rm d}y \nonumber \\*
&&\quad\quad \leq c\int_{B_r}\int_{B_r} {K}(x,y)
 \big(\max\{w_{-}(x),w_-(y)\}\big)^p |\varphi(x)-\varphi(y)|^p\,{\rm d}x{\rm d}y \\*
&&\quad\quad \quad + c\int_{B_r}w_{-}(x)\varphi^p(x)\,{\rm d}x \left(\sup_{y\,\in\, {\rm supp}\,\varphi}\int_{\mathds{R}^n\setminus B_r}{K}(x,y)w_{-}^{p-1}(x)\,{\rm d}x \right), 
\nonumber
\end{eqnarray}
where $w_{-}:=(u-k)_{-}$ for any $k\in \R$, $K$ is any measurable kernel satisfying~\eqref{hp_k}, and $c$ depends only on~$p$.
\end{theorem}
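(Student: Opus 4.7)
The natural plan is to test the weak supersolution inequality \eqref{supersolution} against $\eta := w_-\varphi^p$, with $w_- = (u-k)_-$; this is nonnegative and belongs to $W^{s,p}_0(B_r)$ (Stampacchia-type stability of fractional Sobolev regularity under truncation by the Lipschitz map $t \mapsto (t-k)_-$, combined with $\varphi \in C_0^\infty(B_r)$), so its use in the weak formulation is legitimate after density in $W^{s,p}_0(B_r)$. Decomposing $\R^n \times \R^n$ into $B_r \times B_r$, the two symmetric near--far regions, and $(\R^n \setminus B_r) \times (\R^n \setminus B_r)$ (the last vanishing since $\eta \equiv 0$ outside $B_r$), and using the symmetry of $K$ together with the antisymmetry of $L(u(x),u(y))(\eta(x)-\eta(y))$ under $x \leftrightarrow y$, one arrives at
\begin{equation*}
0 \;\le\; \mathcal{I}_{\rm loc} + 2\,\mathcal{I}_{\rm tail}.
\end{equation*}

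For the local integrand, setting $a = u(x)-k$ and $b = u(y)-k$, the plan is to establish the pointwise estimate
\begin{equation*}
L(a,b)\bigl(a_-\varphi^p(x) - b_-\varphi^p(y)\bigr) \;\le\; c\,(\max\{a_-,b_-\})^p|\varphi(x)-\varphi(y)|^p - \frac{1}{c}\bigl|a_-\varphi(x) - b_-\varphi(y)\bigr|^p
\end{equation*}
by case analysis on the signs of $a,b$. The inequality is trivial when both are positive and immediate in the mixed-sign cases. The decisive case is $a,b \le 0$: writing $\alpha = a_-$, $\beta = b_-$, $\phi = \varphi(x)$, $\psi = \varphi(y)$, one expands $\alpha\phi^p - \beta\psi^p = (\alpha-\beta)\psi^p + \alpha(\phi^p - \psi^p)$, substitutes into $-(\alpha-\beta)|\alpha-\beta|^{p-2}(\alpha\phi^p - \beta\psi^p)$, and absorbs the mixed term $\alpha(\alpha-\beta)^{p-1}|\phi^p-\psi^p|$ into $(\max\{\alpha,\beta\})^p|\phi-\psi|^p$ by Young's inequality with a small parameter, invoking Lemma \ref{lemmai} in the subquadratic range $1 < p < 2$. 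Multiplying by $K(x,y)$, integrating over $B_r \times B_r$, and moving the $|a_-\varphi - b_-\varphi|^p$ piece to the left-hand side supplies the first term of the claim.

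For $\mathcal{I}_{\rm tail}$, since $\eta(y)=0$ on $\R^n \setminus B_r$ the integrand collapses to $L(u(x),u(y))\,w_-(x)\varphi^p(x)\,K(x,y)$ with $x \in B_r$ and $y \in \R^n \setminus B_r$. Restricting to $u(x)<k$ (otherwise $w_-(x)=0$) and splitting the $y$-integration: on $\{u(y)\ge u(x)\}$ one has $L(u(x),u(y))\le 0$, so this contribution is simply discarded in the upper bound; on $\{u(y)<u(x)\}$ both values lie below $k$, so $u(x)-u(y) = w_-(y)-w_-(x) \le w_-(y)$ and consequently $L(u(x),u(y))\,w_-(x) \le w_-(y)^{p-1}w_-(x)$. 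Pulling the factor $w_-(x)\varphi^p(x)$ outside the inner integral (points of $B_r\setminus\supp\varphi$ contribute zero), bounding the inner integral by its supremum over the remaining outer variable and relabelling via the symmetry of $K$, produces precisely the tail contribution as stated.

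The principal obstacle is the pointwise algebraic inequality in the local step: the Young-type absorption of the mixed term $\alpha(\alpha-\beta)^{p-1}|\phi^p-\psi^p|$ into $\max\{\alpha,\beta\}^p|\phi-\psi|^p$ has to be carried out differently in the superquadratic regime $p \ge 2$ (where the elementary convexity of $t\mapsto t^{p-1}$ suffices) and in the subquadratic regime $1<p<2$ (where Lemma~\ref{lemmai} is needed), and this is essentially the only place where the nonlinear growth of $L$ is genuinely felt. Everything else---the cancellation in $\mathcal{I}_{\rm tail}$ on the sign set $\{u(y) \ge u(x)\}$ and the reduction to $w_-(y)^{p-1}w_-(x)$ on its complement---is bookkeeping that becomes transparent once the choice $\eta = w_-\varphi^p$ is made.
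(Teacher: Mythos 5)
The paper does not prove this theorem at all: it is quoted verbatim from~\cite[Theorem~1.4]{DKP15}, so there is no internal proof to compare with; your proposal is, in outline, precisely the argument of that cited proof (test with $\eta=w_-\varphi^p$, split $\R^n\times\R^n$ into $B_r\times B_r$ and the near--far regions, a pointwise algebraic inequality for the local part, and the sign/truncation argument plus a supremum bound for the far interactions), and your treatment of the tail term is correct as written. The one soft spot is exactly the step you flag as the principal obstacle: in the case $a,b\le 0$, expanding $\alpha\phi^p-\beta\psi^p=(\alpha-\beta)\psi^p+\alpha(\phi^p-\psi^p)$ and applying Young gives, via $|\phi^p-\psi^p|\le p\,\max\{\phi,\psi\}^{p-1}|\phi-\psi|$, an error term of size $\eps\,|\alpha-\beta|^p\max\{\phi,\psi\}^p$, whereas the negative term you have produced is only $-|\alpha-\beta|^p\psi^p$; when $\varphi(x)>\varphi(y)$ and $\beta>\alpha$ this cannot be absorbed. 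The fix is standard but should be said: use the symmetry of the integrand under $x\leftrightarrow y$ to assume $w_-(x)\ge w_-(y)$ (equivalently, expand around $\phi^p$ or $\psi^p$ according to which of $\alpha,\beta$ is larger); then the mixed term either has a favorable sign or carries the factor $\max\{\phi,\psi\}^{p-1}$ matching the available negative term, and finally $|\alpha\phi-\beta\psi|^p\le 2^{p-1}\big(|\alpha-\beta|^p\psi^p+\alpha^p|\phi-\psi|^p\big)$ converts the surviving negative term into the one on the left of~\eqref{cacio1}. Note also that this argument is uniform in $p>1$, so Lemma~\ref{lemmai} is not actually needed in the range $1<p<2$; the only inputs are the mean value bound for $t\mapsto t^p$ and Young's inequality, exactly as in the superquadratic case.
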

\begin{remark}\label{rem_caccioppoli}
We underline that the estimate in~\eqref{cacio1} holds by replacing $w_-$ with $w_+:=(u-k)_+$ in the case when $u$ is  a fractional weak subsolution.
\end{remark}

A first natural consequence is the local boundedness of  fractional weak subsolutions, as stated in the following
\begin{theorem}[\bf Local boundedness]{\rm (\!\!\cite[Theorem~1.1 and Remark 4.2]{DKP15}).}\label{thm_local}
Let $u$ be a weak subsolution to~\eqref{problema} and let $B_r\equiv B_r({z})  \subset \Omega$. Then the following estimate holds true
\begin{equation}\label{sup_estimate}
\esssup_{B_{r/2}}u \leq \delta\, {\rm Tail}(u_+;{x_0},r/2)+c\, \delta^{-\gamma} \left(\mean{B_r} u_+^p\,{\rm d}x\right)^{\frac 1p},
\end{equation}
where ${\rm Tail}(\cdot)$ is defined in~\eqref{def_tail}, $\gamma={(p-1)n}/{s p^2}$,
the real parameter~$\delta \in (0,1]$, and the constant $c$ depends only on $n$, $p$, $s$, and~$\Lambda$.
\end{theorem}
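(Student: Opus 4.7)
The plan is to prove the estimate by a De Giorgi style iteration on a geometric sequence of concentric balls and truncation levels, using Theorem~\ref{lem_caccio} (with Remark~\ref{rem_caccioppoli}) as the one-step inequality. Fix $B_r = B_r(z) \subset \Omega$, set
\[
r_i = \frac{r}{2} + \frac{r}{2^{i+1}}, \qquad \tilde r_i = \frac{r_i+r_{i+1}}{2}, \qquad k_i = (1-2^{-i})k, \qquad i \ge 0,
\]
with $k>0$ to be chosen. Let $B_i = B_{r_i}(z)$, $\tilde B_i = B_{\tilde r_i}(z)$, $w_i = (u-k_i)_+$, and pick cutoffs $\varphi_i \in C^\infty_0(\tilde B_i)$ with $\varphi_i \equiv 1$ on $B_{i+1}$ and $|\nabla\varphi_i|\le c\,2^{i}/r$. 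Applying the Caccioppoli inequality to the subsolution $u$ at level $k_i$ on $\tilde B_i$ with $\varphi_i$, and then inserting the result into the fractional Sobolev embedding $W^{s,p}\hookrightarrow L^{p^*}$ applied to $w_i\varphi_i$ (where $p^*=np/(n-sp)$ if $sp<n$, and an arbitrary large exponent otherwise), one obtains a reverse H\"older inequality of the form
\[
\Bigl(\int_{B_{i+1}} w_{i+1}^{p^*}\dx\Bigr)^{p/p^*} \le \frac{c\,2^{ip}}{r^{sp}} \int_{\tilde B_i} w_i^p \dx + c\,T_i \int_{\tilde B_i} w_i \dx,
\]
where $T_i := \sup_{y\in\supp\varphi_i}\int_{\R^n\setminus\tilde B_i} w_i(x)^{p-1} K(x,y)\dx$ is the nonlocal tail contribution from Theorem~\ref{lem_caccio}.

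The key nonlocal point is that since $y\in\supp\varphi_i\subset B_{r}(z)$ and $x\in\R^n\setminus B_{r/2}(z)$, one has $|x-y|\ge c|x-z|$ with a dimensional constant, so by~\eqref{hp_k} and $w_i\le u_+$,
\[
T_i \le \frac{c}{r^{sp}}\,{\rm Tail}(u_+;z,r/2)^{p-1}.
\]
Using Chebyshev with $w_i \ge (k_{i+1}-k_i) = k\,2^{-(i+1)}$ on $\{w_{i+1}>0\}$, the level-set measure $|\{u>k_{i+1}\}\cap\tilde B_i|$ is controlled by $(k\,2^{-i})^{-p}\int_{\tilde B_i} w_i^p$, and H\"older converts the $L^{p^*}$ norm on $B_{i+1}$ into an $L^p$ norm times a power of this measure. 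Setting $A_i := r^{-n}\int_{B_i} w_i^p\dx$ and tracking exponents, this yields a recursive inequality
\[
A_{i+1} \le c\,C^{i}\Bigl(1 + \frac{{\rm Tail}(u_+;z,r/2)^{p-1}}{k^{p-1}}\Bigr) A_i^{1+\beta},
\]
for some $\beta>0$ depending only on $n,s,p$ and some $C>1$.

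A standard convergence lemma for such geometric recursions gives $A_i\to 0$ — hence $u\le k$ a.e.\ in $B_{r/2}$ — provided
\[
A_0^{\beta}\Bigl(1+\frac{{\rm Tail}(u_+;z,r/2)^{p-1}}{k^{p-1}}\Bigr)\le c^{-1}.
\]
Splitting the two contributions by Young's inequality and choosing
\[
k \;=\; \delta\,{\rm Tail}(u_+;z,r/2) + c\,\delta^{-\gamma}\Bigl(\mean{B_r} u_+^p\dx\Bigr)^{1/p}
\]
with $\gamma=(p-1)n/(sp^2)$ exactly balances the two terms (the exponent $\gamma$ comes from solving $p\gamma\beta=p-1$ in the Young splitting), and this smallness condition is then satisfied for any $\delta\in(0,1]$ — delivering~\eqref{sup_estimate}.

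The main obstacle is, as indicated above, to keep the tail anchored at the fixed radius $r/2$ throughout the iteration. Naively, Caccioppoli at step $i$ produces a tail at radius $\tilde r_i$; the iteration would then pick up an unbounded constant as $i\to\infty$. Overcoming this requires the comparison $|x-y|\gtrsim|x-z|$ for $x$ outside $B_{r/2}$ and $y$ inside the support of $\varphi_i$, which allows the full family of tails to be dominated by the single quantity ${\rm Tail}(u_+;z,r/2)$ at the cost of a dimensional constant, independent of the iteration step. The rest of the argument is the combinatorial bookkeeping that extracts the precise exponent $\gamma$.
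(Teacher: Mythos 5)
Your argument follows the same route as the proof this paper is quoting (the statement is recalled from \cite{DKP15} rather than reproved here): the Caccioppoli estimate with tail of Theorem~\ref{lem_caccio} and Remark~\ref{rem_caccioppoli}, the fractional Sobolev embedding, a De Giorgi iteration on shrinking balls and rising levels, and the final choice $k=\delta\,{\rm Tail}(u_+;z,r/2)+c\,\delta^{-\gamma}\big(\mean{B_r}u_+^p\dx\big)^{1/p}$ with $\gamma=(p-1)/(p\beta)$, $\beta=sp/n$, which indeed gives $\gamma=(p-1)n/(sp^2)$. So the skeleton and the bookkeeping are correct.

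There is, however, one step whose justification is wrong as written: the claim that for $y\in\supp\varphi_i\subset B_r(z)$ and $x\in\R^n\setminus B_{r/2}(z)$ one has $|x-y|\geq c\,|x-z|$ with a constant independent of $i$. This is false in that generality ($x$ may lie arbitrarily close to $\supp\varphi_i$), and it cannot be made $i$-independent in your setting either: the tail term produced by Theorem~\ref{lem_caccio} at step $i$ integrates over the complement of the ball on which the estimate is applied, while $\supp\varphi_i$ comes within distance of order $2^{-i}r$ of its boundary, so the correct comparison is only $|x-y|\geq c\,2^{-i}|x-z|$, whence by~\eqref{hp_k}
\begin{equation*}
T_i \leq c\,2^{i(n+sp)}\,r^{-sp}\,{\rm Tail}(u_+;z,r/2)^{p-1}.
\end{equation*}
The ``main obstacle'' you single out is therefore not removed in the way you describe --- but it does not need to be: the factor $2^{i(n+sp)}$ is exactly of the type $C^i$ that your recursion already carries from the cut-offs and the Chebyshev levels, and the fast geometric convergence lemma tolerates such factors, at the sole price of a smaller admissible $A_0$, i.e.\ a larger structural constant $c$ in the choice of $k$ (and hence in~\eqref{sup_estimate}). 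With the geometric claim corrected in this way the iteration closes and your proof coincides with the one in \cite{DKP15}; for $sp\geq n$ one replaces $p^*$ by an arbitrarily large exponent, as you indicate.
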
 
It is worth noticing that the parameter $\delta$ in~\eqref{sup_estimate} allows a precise interpolation between the local and nonlocal terms.
Combining Theorem~\ref{lem_caccio} together with a nonlocal {\it Logarithmic-Lemma} (see \cite[Lemma~1.3]{DKP15}), one can prove that  both 
the $p$-minimizers and weak solutions enjoy oscillation estimates, which naturally yield {H\"older continuity} (see Theorem~\ref{thm_holdere}) and some natural {Harnack estimates with tail}, 
as the nonlocal weak Harnack estimate presented in Theorem~\ref{thm_weakharnack} below.
\begin{theorem}[\bf H\"older continuity]{\rm (\!\!\cite[Theorem 1.2]{DKP15}).}
\label{thm_holdere}
Let $u$ be a weak solution to~\eqref{problema}. Then $u$ is locally H\"older continuous in $\Omega$. In particular, 
there are positive constants  $\alpha$, $\alpha < sp/(p-1)$, and $c$, both depending only on $n,p,s,\Lambda$, such that if $B_{2r}(x_0) \subset \Omega$, then
\begin{equation*} 
\osc_{B_\varrho(x_0)} u \leq c \left(\frac{\varrho}{r}\right)^\alpha \left[ {\rm Tail}(u;x_0,r)+ \bigg(\mean{B_{2r}(x_0)} |u|^p\dx\bigg)^{\frac 1p} \right]
\end{equation*}
holds whenever $\varrho \in (0,r]$, where ${\rm Tail}(\cdot)$ is defined in~\eqref{def_tail}.
\end{theorem}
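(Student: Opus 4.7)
The plan is a classical De Giorgi iteration on a dyadic family of balls, set up so that the nonlocal tail can be carried through the scheme. Fix $\sigma \in (0,1/4)$ (to be chosen small) and set $r_j := \sigma^j r$, $B_j := B_{r_j}(x_0)$, and define the oscillation and tail quantities
\begin{equation*}
\omega_j := \osc_{B_j} u, \qquad T_0 := {\rm Tail}(u;x_0,r) + \Bigl(\mean{B_{2r}(x_0)} |u|^p \dx\Bigr)^{1/p}.
\end{equation*}
The goal is to produce a geometric decay $\omega_{j+1} \leq \lambda\, \omega_j$ plus an absorbable tail error, from which H\"older continuity with some exponent $\alpha < sp/(p-1)$ follows by a standard iteration argument (after comparing $\omega_j$ with $\sigma^{\alpha j} T_0$).

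First I would build the one-step oscillation reduction. On the ball $B_j$, write $M_j := \sup_{B_j} u$, $m_j := \inf_{B_j} u$ and consider the two nonnegative rescaled functions
\begin{equation*}
v^{\pm} := \frac{2}{\omega_j}\bigl( M_j - u\bigr)_{\pm\text{-version}}, \quad \text{i.e.}\ \ v^+ := \tfrac{2}{\omega_j}(M_j - u), \ \ v^- := \tfrac{2}{\omega_j}(u - m_j).
\end{equation*}
Each is a nonnegative weak super- or subsolution in $B_j$. One of them, say $v^+$, has measure $|\{v^+ \geq 1\} \cap B_{j+1}| \geq \tfrac12 |B_{j+1}|$. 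Apply the nonlocal Logarithmic Lemma (Lemma 1.3 of~\cite{DKP15}) to $v^+$ on $B_j$: combined with a Krylov--Safonov-type measure-to-pointwise argument (which in the nonlocal setting is handled through the De Giorgi level-set iteration driven by the Caccioppoli estimate with tail, Theorem~\ref{lem_caccio}), one concludes that either $v^+ \geq \delta$ a.e.\ in $B_{j+1}$ for some $\delta \equiv \delta(n,p,s,\Lambda) \in (0,1)$, or the tail of $v^+$ dominates; that is,
\begin{equation*}
\essinf_{B_{j+1}} v^+ \geq \delta - C\, \omega_j^{-1}\, {\rm Tail}\bigl((u-M_j)_+;x_0,r_j\bigr).
\end{equation*}
Unwinding definitions this yields an oscillation estimate
\begin{equation*}
\omega_{j+1} \leq (1 - \tfrac{\delta}{2})\, \omega_j + C\, {\rm Tail}\bigl((u-M_j)_+;x_0,r_j\bigr) + C\, {\rm Tail}\bigl((u-m_j)_-;x_0,r_j\bigr).
\end{equation*}

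The main obstacle, and the only genuinely nonlocal part, is controlling the two tail terms on the right. I would split each tail as an integral over annular shells $B_{r_{k}}\setminus B_{r_{k+1}}$ for $k=0,\dots,j-1$ plus the far tail over $\R^n \setminus B_0$. On each shell, bound $(u-M_j)_+$ by $M_k - M_j \leq \sum_{i=k}^{j-1}\omega_i$, and similarly for the negative part. A direct computation using $|x-x_0|^{-n-sp}\lesssim r_{k+1}^{-n-sp}$ on the $k$-th shell gives
\begin{equation*}
{\rm Tail}\bigl((u-M_j)_+;x_0,r_j\bigr)^{p-1} \leq C \sum_{k=0}^{j-1}\sigma^{sp(j-k)}\omega_k^{p-1} + C\,\sigma^{spj}\,T_0^{p-1},
\end{equation*}
and likewise for the negative tail. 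Substituting into the previous display and choosing $\sigma$ small so that $C\sigma^{sp/(p-1)} \ll 1$, the tail sum can be reabsorbed to give the clean recursion
\begin{equation*}
\omega_{j+1} \leq \lambda\, \omega_j + C\,\sigma^{sp j}\, T_0, \qquad \lambda := 1 - \tfrac{\delta}{4}.
\end{equation*}

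Finally, I would run a standard iteration lemma (e.g.\ \cite[Lemma~7.54]{HKM06} or a direct induction) to convert this recursion into $\omega_j \leq C\,\sigma^{\alpha j}\,T_0$ for some $\alpha \in (0, sp/(p-1))$ depending only on $n,p,s,\Lambda$; indeed $\alpha$ is determined by $\sigma^\alpha = \max\{\lambda,\sigma^{sp/(p-1)}\}$. Given $\varrho \in (0,r]$, pick $j$ with $r_{j+1}< \varrho \leq r_j$ and the claimed estimate follows directly from $\omega_j \leq C(\varrho/r)^\alpha T_0$. Pointwise H\"older continuity is obtained after replacing $u$ by its continuous representative, whose existence is part of the statement and follows from the uniform oscillation decay.
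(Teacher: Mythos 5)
You should first note that the paper does not actually prove this theorem: it is recalled as a preliminary, attributed to \cite[Theorem 1.2]{DKP15}, and the paper only remarks that the Caccioppoli estimate with tail (Theorem~\ref{lem_caccio}) combined with the nonlocal logarithmic lemma yields the oscillation decay. Your sketch follows exactly that strategy (one-step oscillation reduction via the measure-to-pointwise/De Giorgi argument with tail, annular decomposition of the tail, iteration), so the route is the intended one; the issue is with one pivotal step in your write-up.

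The gap is the ``reabsorption'' claim. From the shell decomposition you correctly obtain ${\rm Tail}\big((u-M_j)_+;x_0,r_j\big)^{p-1}\leq C\sum_{k=0}^{j-1}\sigma^{sp(j-k)}\omega_k^{p-1}+C\sigma^{spj}T_0^{p-1}$, but this sum involves the \emph{earlier} oscillations $\omega_k$, $k<j$, which may be much larger than $\omega_j$; no smallness of $\sigma$ lets you absorb them into $\lambda\,\omega_j$. With only the a priori bound $\omega_k\leq\omega_0\leq C\,T_0$ (which itself needs Theorem~\ref{thm_local}) the sum is merely bounded by $C\sigma^{sp}T_0^{p-1}$ uniformly in $j$, and the resulting recursion $\omega_{j+1}\leq\lambda\,\omega_j+C\sigma^{sp/(p-1)}T_0$ does not force $\omega_j\to0$, hence gives no H\"older continuity; so the ``clean recursion'' $\omega_{j+1}\leq\lambda\,\omega_j+C\sigma^{spj}T_0$ (whose inhomogeneous term should in any case read $\sigma^{spj/(p-1)}T_0$ after taking the $1/(p-1)$ power) does not follow from what precedes it. The standard repair, and the one used in~\cite{DKP15}, is to prove the decay $\omega_j\leq K\sigma^{\alpha j}T_0$ directly by induction on $j$, inserting the inductive hypothesis for all $k\leq j$ into the shell sum: then $\sum_{k}\sigma^{sp(j-k)}\sigma^{\alpha(p-1)k}\leq C\sigma^{sp}\sigma^{\alpha(p-1)(j-1)}$ precisely when $\alpha(p-1)<sp$, which is exactly where the restriction $\alpha<sp/(p-1)$ in the statement originates; choosing first $\sigma$ small, then $\alpha$ small enough that $(1-\delta/2)\sigma^{-\alpha}<1$, and $K$ large, closes the induction. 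With the argument organized as this simultaneous induction (rather than a two-stage ``recursion then iteration lemma''), your one-step reduction, annular tail estimate, base case via local boundedness, and the final choice of $j$ with $r_{j+1}<\varrho\leq r_j$ do yield the stated estimate.
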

\begin{theorem}[\bf Nonlocal weak Harnack inequality]{\rm (\!\!\cite[Theorem 1.2]{DKP14}).}\label{thm_weakharnack} 
be a weak supersolution to~\eqref{problema} such that $u\geq 0$ in $B_R\equiv B_R({x_0})\subset\Omega$. Let 
\begin{equation} \label{e.bar t}
\bar t := \begin{cases}
  \frac{(p-1)n}{n-sp}, & 1<p< \frac ns,   \\
  +\infty , & p \geq \frac{n}s. 
\end{cases}
\end{equation}
Then the following estimate holds for any $B_{r}\equiv B_{r}({x_0})\subset B_{R/2}(x_0)$ and for any $t< \bar t$
\begin{equation*}
\left( \mean{B_r} u^t\dx \right)^{\!\frac{1}{t}} \, \leq \,
c \essinf_{B_{2r}} u 
+ c \left(\frac{r}{R}\right)^{\frac{sp}{p-1}} \text{\rm Tail}(u_-; {x_0},R),
\end{equation*}
where ${\rm Tail}(\cdot)$ is defined in~\eqref{def_tail},  
and the constant $c$ depends only on $n$, $p$, $s$, and $\Lambda$.
\end{theorem}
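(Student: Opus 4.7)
The plan is to follow a Moser-type strategy adapted to the nonlocal nonlinear setting, where every step must carry a correction by the nonlocal tail. Set $\bar u := u + d$ on $B_R$ with $d := \bigl(r/R\bigr)^{sp/(p-1)}\,\text{\rm Tail}(u_-;x_0,R)$, so that $\bar u \ge d > 0$ and, in the weak formulation, the tail contributions from outside $B_R$ can be absorbed as a constant shift. First I would test the equation with $\eta = \varphi^p \bar u^{-(p-1)-\varepsilon}$ for small $\varepsilon > 0$ and $\varphi \in C_0^\infty(B_\varrho)$ a cutoff, and, using the pointwise algebraic inequality from Remark~\ref{ap-2a-bp-2b} applied to the increments of $\bar u^{-\sigma}$ where $\sigma := (\varepsilon+p-1)/p$, derive a Caccioppoli-type estimate of the form
\begin{equation*}
[\bar u^{-\sigma}]_{W^{s,p}(B_{\varrho/2})}^{p} \le \frac{c}{(\varrho - \varrho')^{sp}} \|\bar u^{-\sigma}\|_{L^p(B_{\varrho})}^{p} + c\,\text{(tail correction)},
\end{equation*}
exactly along the lines of Theorem~\ref{lem_caccio}, where the tail correction is controlled thanks to the choice of $d$.

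Next, combining this with fractional Sobolev embedding one obtains a reverse-H\"older inequality for $\bar u^{-\sigma}$, and a standard Moser iteration on a geometric sequence of concentric balls between $B_{2r}$ and $B_{3r/2}$ yields, for any fixed small $\gamma_0 > 0$,
\begin{equation*}
\Bigl(\essinf_{B_{2r}} \bar u\Bigr)^{-1} \le c \left(\mean{B_{3r/2}} \bar u^{-\gamma_0} \dx\right)^{1/\gamma_0}.
\end{equation*}
Symmetrically, the same testing procedure with positive small exponents gives a corresponding reverse-H\"older type estimate for $\bar u^{\gamma_0}$ upgrading any $L^{\gamma_0}$ norm to an $L^t$ norm for every $t < \bar t$; this is where the summability exponent $\bar t$ naturally appears, through the Sobolev exponent $np/(n-sp)$ combined with the exponent shift $p-1$ inherent in the nonlinear test function.

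The essential remaining step is to bridge the small positive and small negative powers of $\bar u$. For this, I would invoke the nonlocal logarithmic lemma of \cite{DKP15} (the one already used to prove Theorem~\ref{thm_holdere}), which gives a BMO-type estimate for $\log \bar u$ on balls inside $B_{R/2}$, again with a correction absorbed into the definition of $d$. The John--Nirenberg inequality then provides a common exponent $\gamma_0$ and a constant $c_0$ with
\begin{equation*}
\mean{B_{3r/2}} e^{\gamma_0 \log \bar u} \dx \cdot \mean{B_{3r/2}} e^{-\gamma_0 \log \bar u} \dx \le c_0,
\end{equation*}
which patches together the two reverse-H\"older chains and yields the claimed estimate with $\bar u$ in place of $u$; the tail is then recovered by unfolding the definition of $d$.

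\textbf{Main obstacle.} The genuinely delicate point is the tail correction in the Caccioppoli estimate for negative powers: when testing with $\varphi^p \bar u^{-(p-1)-\varepsilon}$ the long-range contribution $\int_{\mathbf{R}^n \setminus B_R} |u(x)-u(y)|^{p-2}(u(x)-u(y)) K(x,y)\dx$ must be bounded by a quantity of the form $d^{-(p-2)-\varepsilon}\,\text{\rm Tail}(u_-;x_0,R)^{p-1}$, and the scaling has to be exactly such that this tail piece can be absorbed by the dominant local Caccioppoli term after the choice of $d$. Getting this balance right is what forces both the precise factor $(r/R)^{sp/(p-1)}$ in front of the tail in the conclusion and the restriction $t < \bar t$ on the integrability exponent, and it is the place where the nonlinear growth $p \ne 2$ and the bounds in \eqref{hp_k} interact most subtly.
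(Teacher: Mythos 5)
First, a point of order: the paper itself does not prove Theorem~\ref{thm_weakharnack}; it is imported verbatim from \cite[Theorem 1.2]{DKP14}, with only the remark that the case $p\geq n/s$ (where $\bar t=+\infty$) was not treated there and follows by straightforward modifications. So your proposal has to be measured against the argument of \cite{DKP14}, whose toolbox (Caccioppoli estimate with tail as in Theorem~\ref{lem_caccio}, the logarithmic lemma of \cite{DKP15}) is exactly what you invoke. In that sense your overall strategy --- shift by $d=(r/R)^{sp/(p-1)}\,{\rm Tail}(u_-;x_0,R)$, Caccioppoli for negative powers of $\bar u$, reverse H\"older chains for small negative and positive exponents (the latter explaining the threshold $\bar t$), and a crossing of the zero exponent via the log-estimate --- is the right kind of strategy, and your identification of where the factor $(r/R)^{sp/(p-1)}$ comes from is sound.

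There is, however, a concrete gap. The displayed step $(\essinf_{B_{2r}}\bar u)^{-1}\le c\,\big(\mean{B_{3r/2}}\bar u^{-\gamma_0}\dx\big)^{1/\gamma_0}$ is not what a Moser iteration ``on concentric balls between $B_{2r}$ and $B_{3r/2}$'' yields: the iteration controls the supremum of $\bar u^{-1}$ over the \emph{smaller} ball by an integral average over the \emph{larger} one, so it produces $\essinf_{B_{3r/2}}\bar u$ (or $\essinf_{B_r}\bar u$) on the left. Since $\essinf_{B_{2r}}u\le\essinf_{B_{3r/2}}u$, the theorem as stated --- with the infimum over the larger ball $B_{2r}$ and the average over the smaller ball $B_r$ --- is strictly stronger than what your scheme delivers, and this ``reversed'' ball geometry is precisely the distinctive feature of the nonlocal result. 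Recovering it requires an additional ingredient not present in your sketch: either a covering/chaining argument recentred at points of $B_{2r}$, using that $u$ is a nonnegative supersolution in all of $B_R$ and re-estimating ${\rm Tail}(u_-;y,\rho)$ at each new centre by $c(\rho/R)^{sp/(p-1)}{\rm Tail}(u_-;x_0,R)$, or a genuinely nonlocal expansion-of-positivity step in which the long-range term coming from $B_r$ enters the estimates on neighbouring balls with a favourable sign; either way this is delicate because only $B_r\subset B_{R/2}$ is assumed, so $B_{2r}$ may nearly exhaust $B_R$ and one cannot simply iterate on balls larger than $B_{2r}$. Secondary points: the John--Nirenberg step needs a fractional Poincar\'e inequality to convert the Gagliardo-type bound of the logarithmic lemma into an actual BMO bound for $\log\bar u$ (with the tail again absorbed by $d$); the absorption quantity $d^{-(p-2)-\varepsilon}{\rm Tail}(u_-;x_0,R)^{p-1}$ in your ``main obstacle'' paragraph has the wrong exponent bookkeeping; and the case $p\ge n/s$, which the paper explicitly flags as requiring modifications of \cite{DKP14}, is not addressed.
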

To be precise, the case $p \geq \frac{n}{s}$ was not treated in the proof of the weak Harnack with tail in~\cite{DKP14}, but one may deduce the result in this case by straightforward modifications.

As expected, the contribution given by the nonlocal tail has again to be considered and the result is analogous to the local case if $u$ is nonnegative in the whole~$\R^n$.

\vspace{2mm}

We finally conclude this section by recalling three results for the solution to the obstacle problem in the fractional nonlinear framework we are dealing in. First, we consider the following set of functions,
\[
\mathcal K_{g,h}(\Omega,\Omega') = \Big\{u \in W^{s,p}(\Omega') \,:\, u \geq h\, \text{ a.~\!e. in } \Omega, \, u = g\, \text{ a.~\!e. on } \R^n \setminus \Omega \Big\},
\]
where $\Omega \Subset \Omega'$ are open bounded subsets of $\R^n$, $h\colon \R^n \to [-\infty,\infty)$ is the obstacle, and $g \in W^{s,p}(\Omega') \cap L^{p-1}_{sp}(\R^n)$ determines the boundary values.
The solution $u \in \mathcal K_{g,h}(\Omega,\Omega')$ to the obstacle problem satisfies 
$$
\langle \mathcal A(u) , v -u \rangle \geq 0 \qquad \text{for all } v \in  \mathcal K_{g,h}(\Omega,\Omega'),
$$
where the functional $\mathcal A(u) $ is defined, for all $w \in \mathcal K_{g,h}(\Omega,\Omega') \cap W_0^{s,p}(\Omega)$, as
$$
\langle \mathcal A(u) , w \rangle  := \int_{\R^n} \int_{\R^n} L(u(x),u(y)) \big(w(x) - w(y)\big) K(x,y) \dxy. 
$$
The results needed here are the uniqueness, the fact that such a solution is a weak supersolution and/or a weak solution to~\eqref{problema}, and the continuity of the solution up to the boundary
under precise assumptions on the functions $g$, $h$ and the set $\Omega$. 
\begin{theorem}[\bf Solution to the nonlocal obstacle problem]{\rm (\!\!\cite[Theorem 1]{KKP15}).} \label{obst prob sol} 
There exists a unique solution to the obstacle problem in $\mathcal K_{g,h}(\Omega,\Omega')$. Moreover, the solution to the obstacle problem is a weak supersolution to~\eqref{problema} in $\Omega$. 
\end{theorem}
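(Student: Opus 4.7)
The plan is to split the statement into three parts: existence, uniqueness, and the supersolution property, treating them in that order.

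For existence, I would proceed via the direct method applied to the strictly convex energy functional
\[
\mathcal F(u) := \frac{1}{p}\int_{\R^n}\int_{\R^n} |u(x)-u(y)|^p K(x,y)\dxy,
\]
whose Euler--Lagrange condition on the convex set $\mathcal K_{g,h}(\Omega,\Omega')$ is exactly the variational inequality $\langle \mathcal A(u),v-u\rangle\geq 0$ for all $v\in\mathcal K_{g,h}(\Omega,\Omega')$. First I would verify that $\mathcal K_{g,h}(\Omega,\Omega')$ is non-empty (take $v=\max\{g,h\}$ when admissible, or argue with the competitor guaranteed by the boundary datum) and that $\mathcal F$ is finite on at least one element. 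Since $u-g \in W_0^{s,p}(\Omega)$ for every $u \in \mathcal K_{g,h}(\Omega,\Omega')$, the fractional Poincaré inequality combined with the two-sided bound~\eqref{hp_k} on $K$ gives coercivity in $W^{s,p}(\Omega')$ of any minimizing sequence. Extracting a weakly convergent subsequence $u_j \rightharpoonup u$ in $W^{s,p}(\Omega')$ and passing to an a.e. convergent subsequence, the obstacle condition $u_j\geq h$ a.e. in $\Omega$ and the boundary condition $u_j=g$ a.e. on $\R^n\setminus \Omega$ are preserved in the limit, so $u \in \mathcal K_{g,h}(\Omega,\Omega')$; weak lower semicontinuity of the convex functional $\mathcal F$ then gives the minimality of $u$.

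For uniqueness, which is the main technical point, I would suppose $u_1$ and $u_2$ are two solutions and test each variational inequality against the other: $\langle \mathcal A(u_1),u_2-u_1\rangle\geq 0$ and $\langle \mathcal A(u_2),u_1-u_2\rangle\geq 0$. Adding these yields
\[
\int_{\R^n}\!\!\int_{\R^n}\!\bigl[L(u_1(x),u_1(y))-L(u_2(x),u_2(y))\bigr]\bigl[(u_1-u_2)(x)-(u_1-u_2)(y)\bigr] K(x,y)\dxy \leq 0.
\]
By the algebraic inequality~\eqref{a-b positive} with $a:=u_1(x)-u_1(y)$ and $b:=u_2(x)-u_2(y)$, the integrand is pointwise nonnegative, so it vanishes a.e. Using the strict form of the same inequality, this forces $(u_1-u_2)(x)=(u_1-u_2)(y)$ for a.e. $(x,y)$; hence $u_1-u_2$ is constant on $\R^n$, and since $u_1=u_2=g$ on $\R^n\setminus \Omega$, the constant is zero.

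For the supersolution property, given any nonnegative $\eta \in C_0^\infty(\Omega)$, I would choose the competitor $v := u+\eta$. Since $\supp\eta\subset\Omega$, we have $v=g$ a.e. on $\R^n\setminus\Omega$, and $v\geq u\geq h$ a.e. in $\Omega$, so $v\in\mathcal K_{g,h}(\Omega,\Omega')$. The variational inequality then reads
\[
0 \leq \langle \mathcal A(u),v-u\rangle = \langle \mathcal A(u),\eta\rangle = \int_{\R^n}\!\!\int_{\R^n} L(u(x),u(y))\bigl(\eta(x)-\eta(y)\bigr)K(x,y)\dxy,
\]
which is exactly~\eqref{supersolution}; moreover $u_-\in L^{p-1}_{sp}(\R^n)$ follows from $u=g$ outside $\Omega$ with $g\in L^{p-1}_{sp}(\R^n)$ and $u$ bounded below in $\Omega$ by $h$ (or by combining the $W^{s,p}(\Omega')$ regularity with Lemma~\ref{l.tail in control}). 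The main obstacle I anticipate is ensuring that all the quantities in the uniqueness argument are integrable and that the test function manipulations are justified under the merely measurable kernel assumption; the growth bound~\eqref{hp_k} together with the $W^{s,p}(\Omega')$-regularity of the admissible functions and the same estimates used in Remark~\ref{rem_superdef} should handle this uniformly.
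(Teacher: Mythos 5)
Your uniqueness and supersolution arguments are essentially the ones used in the cited proof: testing the two variational inequalities against each other and invoking the strict monotonicity behind \eqref{a-b positive} gives $u_1-u_2$ constant, hence zero since both equal $g$ on $\R^n\setminus\Omega$; and choosing the competitor $v=u+\eta$ for nonnegative $\eta\in C_0^\infty(\Omega)$ yields exactly \eqref{supersolution}, with $u_-\in L^{p-1}_{sp}(\R^n)$ coming from $u=g$ outside $\Omega$ and $u\in W^{s,p}(\Omega')$. Those parts are fine (modulo the standing assumption that $\mathcal K_{g,h}(\Omega,\Omega')$ is non-empty, which is part of the hypotheses rather than something to be produced by taking $\max\{g,h\}$).

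The existence step, however, has a genuine gap. The functional $\mathcal F(u)=\frac1p\int_{\R^n}\int_{\R^n}|u(x)-u(y)|^pK(x,y)\dxy$ is in general identically $+\infty$ on $\mathcal K_{g,h}(\Omega,\Omega')$: outside $\Omega'$ the datum $g$ is only assumed to lie in the tail space $L^{p-1}_{sp}(\R^n)$, which controls $(p-1)$-powers of $g$ against the weight $(1+|x|)^{-n-sp}$ but says nothing about the Gagliardo double integral of $g$ over $(\R^n\setminus\Omega)\times(\R^n\setminus\Omega)$, nor about the cross terms $\int_\Omega\int_{\R^n\setminus\Omega'}|u(x)-g(y)|^pK(x,y)\dxy$, which require $|g|^p$-integrability against the kernel. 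Even a bounded smooth datum such as $g(x)=\sin x_1$ gives $\mathcal F\equiv+\infty$ on the whole admissible class by translation invariance. Consequently the claims that $\mathcal F$ is finite on some element, coercive, weakly lower semicontinuous, and that minimizers are characterized by the variational inequality $\langle\mathcal A(u),v-u\rangle\geq0$, are all unfounded as stated. This is precisely why the proof in \cite{KKP15} does not minimize a global energy: it verifies that $\mathcal A$ is monotone, coercive and weakly continuous on the convex set $\mathcal K_{g,h}(\Omega,\Omega')$ and invokes the standard existence theorem for variational inequalities governed by such operators; there only pairings $\langle\mathcal A(u),w\rangle$ with $w$ supported in $\Omega$ appear, and these are finite by estimates of the type in Remark~\ref{rem_superdef}. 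Alternatively, your direct method can be repaired by renormalizing the functional (discarding the $(\R^n\setminus\Omega)\times(\R^n\setminus\Omega)$ contribution and treating the cross terms through differences such as $|u(x)-g(y)|^p-|g(x)-g(y)|^p$, estimated via $\bigl||a|^p-|b|^p\bigr|\leq p(|a|+|b|)^{p-1}|a-b|$ and the tail condition), but some such device is indispensable; as written, the existence argument does not go through.
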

\begin{corollary}{\rm (\!\!\cite[Corollary 1]{KKP15}).} \label{obst prob free}
Let $u$ be the solution to the obstacle problem in $\mathcal K_{g,h}(\Omega,\Omega')$. If $B_r \subset \Omega$ is such that
\[
\essinf_{B_r} (u - h) >0,
\]
then $u$ is a weak solution to~\eqref{problema} in $B_r$. In particular, if $u$ is lower semicontinuous and $h$ is upper semicontinuous in $\Omega$, then $u$ is a weak solution to~\eqref{problema} in $\Omega_+:=\big\{x \in \Omega : u(x)>h(x)\big\}$.
\end{corollary}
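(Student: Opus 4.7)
The plan is to exploit the variational inequality $\langle \mathcal{A}(u), v-u\rangle \geq 0$ that $u$ satisfies, by constructing admissible competitors of the form $v = u - \varepsilon \eta$ whenever $\eta \geq 0$ is compactly supported in $B_r$. Since we already know from Theorem~\ref{obst prob sol} that $u$ is a weak supersolution in $\Omega$, it suffices to prove that $u$ is also a weak subsolution in $B_r$, and then combine the two facts.

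For the first assertion, let $\delta := \essinf_{B_r}(u-h) > 0$ and fix a nonnegative $\eta \in C_0^\infty(B_r)$. The candidate $v := u - \varepsilon \eta$ belongs to $W^{s,p}(\Omega')$, equals $g$ almost everywhere on $\R^n \setminus \Omega$, and satisfies
\[
v \geq u - \varepsilon \|\eta\|_{L^\infty} \geq h + \delta - \varepsilon \|\eta\|_{L^\infty} \geq h \quad \text{a.e. in } \Omega,
\]
provided $\varepsilon \leq \delta / \|\eta\|_{L^\infty}$. Thus $v \in \mathcal{K}_{g,h}(\Omega,\Omega')$, and plugging it into the obstacle inequality yields $-\varepsilon \langle \mathcal{A}(u), \eta\rangle \geq 0$, i.e.\ $\langle \mathcal{A}(u), \eta\rangle \leq 0$ for every admissible nonnegative $\eta$. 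Together with the supersolution property this gives $\langle \mathcal{A}(u), \eta\rangle = 0$ on test functions supported in $B_r$, so $u$ is a weak solution there.

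For the second assertion, observe that under the stated semicontinuity hypotheses the function $u - h$ is lower semicontinuous in $\Omega$. Hence for every $x_0 \in \Omega_+$ there exists $r>0$ with $B_r(x_0) \Subset \Omega$ such that
\[
\essinf_{B_r(x_0)} (u - h) \geq \tfrac{1}{2}\bigl(u(x_0) - h(x_0)\bigr) > 0,
\]
and the first part applies on each such ball. Since the weak solution property is local, this yields that $u$ is a weak solution throughout $\Omega_+$.

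I do not anticipate any serious obstacle: the main subtlety is simply verifying that $v = u - \varepsilon\eta$ stays above the obstacle, which is exactly where the quantitative gap $\essinf_{B_r}(u-h)>0$ is used. The localization step in the second part is routine once one notices that lower semicontinuity of $u-h$ produces balls on which the essential infimum of $u-h$ is strictly positive.
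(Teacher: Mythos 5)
Your argument is correct and is essentially the same as the one behind this statement, which the paper itself does not prove but imports from \cite[Corollary~1]{KKP15}: one perturbs the solution by $v=u-\eps\eta$ with $\eta\in C_0^\infty(B_r)$ nonnegative and $\eps\le \essinf_{B_r}(u-h)/\|\eta\|_{L^\infty}$, uses the variational inequality to get the subsolution property in $B_r$, and combines it with the supersolution property from Theorem~\ref{obst prob sol}; the localization via lower semicontinuity of $u-h$ is also the standard step. Only cosmetic remarks: your displayed chain of inequalities should be read as holding a.e.\ in $B_r$ (outside $B_r$ one simply has $v=u\geq h$ since $\eta$ vanishes there), and in the second part the bound $\tfrac12\big(u(x_0)-h(x_0)\big)$ should be replaced by any finite positive constant when $h(x_0)=-\infty$, which changes nothing in the argument.
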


We say that a set $E \subset \R^n$ satisfies a measure density condition if there exist $r_0>0$ and $\delta_E \in (0,1)$ such that
\begin{equation} \label{eq:dens cond}
\inf_{0<r<r_0} \frac{| E \cap B_r(x_0)|}{|B_r(x_0)|} \geq \delta_E
\end{equation}
for every $x_0\in\partial E$. Notice that if $D$ and $\Omega$ are open sets such that $D \Subset \Omega$, there always exists an open set $U$ such that $D \Subset U \Subset \Omega$ with $\R^n \setminus U$ satisfying the measure density condition \eqref{eq:dens cond}.

\begin{theorem}{\rm (\!\!\cite[Theorem 9]{KKP15})} \label{thm:cont up to bdry}
Suppose that $\R^n \setminus \Omega$ satisfies the measure density condition \eqref{eq:dens cond} and suppose that $g \in \mathcal K_{g,h}(\Omega,\Omega')$. Let $u$ solve the obstacle problem in $\mathcal K_{g,h}(\Omega,\Omega')$.
If $g$ is continuous in $\Omega'$ and $h$ is either continuous in $\Omega$ or $h \equiv -\infty$, then $u$ is continuous in $\Omega'$.
\end{theorem}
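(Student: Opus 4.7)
The plan is to split $\Omega'$ into three regimes -- the open exterior $\Omega' \setminus \overline\Omega$, the interior $\Omega$, and the boundary $\partial\Omega$ -- and to verify continuity in each. On $\Omega' \setminus \overline\Omega$ the identity $u = g$ almost everywhere, together with continuity of $g$, lets us take the pointwise representative $u := g$ there, so nothing is to prove.

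For interior continuity I would first handle the trivial case $h \equiv -\infty$: the obstacle is inactive, $u$ is a weak solution in $\Omega$ by Theorem~\ref{obst prob sol}, and Theorem~\ref{thm_holdere} yields local H\"older continuity. When $h$ is continuous, I would invoke the existence of a lower semicontinuous representative for weak supersolutions (proved elsewhere in the paper and applicable to $u$ via Theorem~\ref{obst prob sol}) so that the noncoincidence set $\Omega_+ := \{u > h\}$ is relatively open in $\Omega$; on $\Omega_+$ the function $u$ is a weak solution by Corollary~\ref{obst prob free}, hence continuous by Theorem~\ref{thm_holdere}. At a coincidence point $x_0 \in \Omega$ with $u(x_0) = h(x_0)$, lower semicontinuity gives $\liminf_{y \to x_0} u(y) \geq h(x_0)$; for the matching $\limsup$, fix $\varepsilon > 0$, select $r$ so small that $B_r(x_0) \Subset \Omega$ and $h < h(x_0) + \varepsilon$ on $B_r(x_0)$, and test the variational inequality with $v := u - \phi\,(u - h(x_0) - \varepsilon)_+$ for a cutoff $\phi \in C_0^\infty(B_r(x_0))$ equal to $1$ on $B_{r/2}(x_0)$. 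By the choice of $r$ one has $v \geq h$ almost everywhere and $v = u$ outside $B_r(x_0)$, so $v \in \mathcal K_{g,h}(\Omega, \Omega')$; feeding this in and using the algebraic bounds of Lemma~\ref{lemmai} shows that $(u - h(x_0) - \varepsilon)_+$ behaves as a weak subsolution in $B_{r/2}(x_0)$ up to a controlled tail correction. Theorem~\ref{thm_local} with a small choice of the interpolation parameter $\delta$ then yields $\limsup_{y \to x_0} u(y) \leq h(x_0) + \varepsilon$; sending $\varepsilon \to 0$ finishes the interior case.

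The boundary case $x_0 \in \partial\Omega$ is the main obstacle. Fix $\varepsilon > 0$ and, by continuity of $g$, pick $r_0$ with $|g - g(x_0)| < \varepsilon$ on $B_{r_0}(x_0)$. The plan is a Wiener-type two-sided oscillation decay. For the lower bound, $u - g(x_0) + \varepsilon$ is a weak supersolution in $B_r(x_0) \cap \Omega$ which is nonnegative almost everywhere on $B_r(x_0) \setminus \Omega$; applying the tail-corrected weak Harnack inequality of Theorem~\ref{thm_weakharnack} together with the measure density condition~\eqref{eq:dens cond} on $\R^n \setminus \Omega$ forces $u \geq g(x_0) - C\varepsilon$ in a smaller concentric ball, provided $r$ is taken small enough to absorb the tail. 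For the upper bound, the admissibility $g \in \mathcal K_{g,h}(\Omega,\Omega')$ gives $h \leq g \leq g(x_0) + \varepsilon$ near $\partial\Omega$, so the obstacle does not interfere with a positive truncation; the truncation $(u - g(x_0) - \varepsilon)_+$ is then a weak subsolution in $B_r(x_0) \cap \Omega$ vanishing almost everywhere on $B_r(x_0) \setminus \Omega$, and the Caccioppoli estimate with tail of Theorem~\ref{lem_caccio} followed by a De Giorgi iteration as in Theorem~\ref{thm_local}, combined again with the density~\eqref{eq:dens cond}, yields $u \leq g(x_0) + C\varepsilon$ in a smaller ball. Combining the two bounds and letting $\varepsilon \to 0$ delivers continuity at $x_0$.

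The hard part throughout is the uniform control, as $r \to 0$, of the nonlocal tail contributions appearing in every Caccioppoli, local boundedness, and weak Harnack step; this control is obtained from $u \in W^{s,p}(\Omega') \cap L^{p-1}_{sp}(\R^n)$ via Lemma~\ref{l.tail in control}, together with the observation that the tail error scales like a positive power of $r$ after rescaling. A secondary technical point is the interaction of the obstacle with the upper oscillation bound near $\partial\Omega$, which is reconciled by the compatibility $h \leq g$ in $\Omega$ guaranteed by the admissibility $g \in \mathcal K_{g,h}(\Omega,\Omega')$.
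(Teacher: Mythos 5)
First, a point of record: the paper itself does not prove this statement --- it is imported verbatim from \cite[Theorem 9]{KKP15}, so there is no in-paper proof to compare against; what follows measures your outline against the strategy that the cited proof actually implements. Your skeleton (exterior set trivial; interior split into noncoincidence set, where Corollary~\ref{obst prob free} and Theorem~\ref{thm_holdere} apply, and coincidence points; boundary handled through truncations at levels $g(x_0)\pm\varepsilon$ that vanish outside $\Omega$, combined with the density condition \eqref{eq:dens cond}) is the right one, but the two quantitative steps that carry the proof would fail as written.

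At the boundary you invoke Theorem~\ref{thm_weakharnack}, Theorem~\ref{lem_caccio} and Theorem~\ref{thm_local}, but these are interior statements: they require the super/subsolution property in a full ball compactly contained in the region where the weak formulation holds, whereas your balls $B_r(x_0)$ are centered at $\partial\Omega$, where $u$ only satisfies the variational inequality against comparison functions in $\mathcal K_{g,h}(\Omega,\Omega')$. The missing step --- and it is the crux --- is to show that $w_\pm=(u-g(x_0)\mp\varepsilon)_\pm$ obey Caccioppoli-type energy inequalities \emph{across} $\partial\Omega$, by verifying that $v=u\mp\phi^p w_\pm$ is admissible (this is exactly where continuity of $g$ and $h\le g$ enter); you carry out this verification at interior coincidence points but never at the boundary. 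Moreover, applying the weak Harnack inequality to $u-g(x_0)+\varepsilon$ cannot deliver the lower bound: that function is not known to be nonnegative in $B_r(x_0)$ (this is what is being proved), and on $B_r\setminus\Omega$ it is only of size at most $2\varepsilon$, so the density condition gives no useful lower bound on its $L^t$-average; the correct object is the truncation $(u-g(x_0)+\varepsilon)_-$, which vanishes a.e.\ outside $\Omega$, treated by the same subsolution-type machinery as $w_+$.

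The second gap is quantitative and affects both the boundary case and your interior coincidence-point argument. A single application of the sup estimate of Theorem~\ref{thm_local} gives $\esssup_{B_{r/2}}w_+\le \delta\,{\rm Tail}(w_+;x_0,r/2)+c\,\delta^{-\gamma}\bigl(\mean{B_r}w_+^p\,{\rm d}x\bigr)^{1/p}$, and neither term is small as $r\to0$: the tail of a bounded function at scale $r$ is of order one (so your claim that ``the tail error scales like a positive power of $r$ after rescaling'' is false --- Lemma~\ref{l.tail in control} yields finiteness of the tail, not smallness), and the $L^p$-average of $(u-k)_+$ is not known to vanish at $x_0$, since that is essentially the continuity being proved; note also that choosing $\delta$ small inflates the average term by $\delta^{-\gamma}$. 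What the cited proof actually does is an iteration over dyadic scales: the measure density (respectively, the obstacle bound at coincidence points) guarantees that the relevant truncation vanishes, or that $u$ stays below/above the level, on a definite portion of each ball, and this is converted into a geometric decay of $\esssup_{B_\rho}(u-k)_+$ (and of the symmetric quantity from below), with the tail at each scale split into a near part controlled by the bounds already obtained at larger scales and a far part of size $(\rho/r_0)^{sp/(p-1)}$, the errors then being summed. This iteration is the technical core of \cite[Theorem 9]{KKP15} and is absent from your outline.
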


\begin{remark} \label{remark:unifcont}
The proof of~\cite[Theorem 9]{KKP15}) gives a uniform modulus of continuity, because it is based on a priori estimates. In particular, if we have a sequence of boundary data $\{g_j\} $, $g_j \in \mathcal K_{g,h}(\Omega,\Omega')$, having a uniform modulus of continuity on compact subsets of $\Omega'$, then the corresponding family of solutions $\{u_j\}$ has a uniform modulus of continuity on compacts as well.
\end{remark}

\vs


\vs\section{Properties of the fractional weak supersolutions}\label{sec_most}

In order to prove all the main results in the present manuscript and to develop the basis for the fractional nonlinear Potential Theory, we need to perform careful computations on the strongly nonlocal form of the operators~$\mathcal{L}$ in~\eqref{problema}. Hence, it was important for us to understand how to modify the classical techniques in order to deal with nonlocal integro-differential energies, in particular to manage the contributions coming from far.
Therefore, in this section we state and prove some general and independent results for fractional weak supersolutions, to be applied here in the rest of the paper. We provide the boundedness from below and some precise control from above of the fractional energy of weak supersolutions, which could have their own interest in the analysis of equations involving the (nonlinear) fractional Laplacian and related nonlinear integro-differential operators. Next, we devote our attention to the essential properties of the weak fractional supersolutions, by investigating natural comparison principles, and lower semicontinuity. We then discuss the pointwise convergence of sequences of supersolutions and other related results.
Our results aim at constituting the fractional counterpart of the basis of the classical nonlinear Potential Theory. 

\subsection{A priori bounds for weak supersolutions}\label{sec_apriori}

The next result states that weak supersolutions are locally essentially bounded from below. 

\begin{lemma} \label{hM bound}
Let $v$ be a weak supersolution in $\Omega$, let $h \in L^{p-1}_{sp}(\R^n)$ and assume that $h \leq v \leq 0$ almost everywhere in $\R^{n}$.
Then, for all $D \Subset \Omega$ there is a constant $C \equiv C(n,p,s,\Lambda,\Omega,D,h)$ such
that
\[
\essinf_{D}v \geq -C.
\]
\end{lemma}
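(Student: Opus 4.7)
The plan is to apply the local boundedness estimate of Theorem~\ref{thm_local} to the nonnegative weak subsolution $w:=-v$, and to compensate for the fact that $h$ is only assumed to lie in $L^{p-1}_{sp}(\R^n)$ (not in $L^p_{\mathrm{loc}}$) by a Moser-type iteration.

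First, observe that $w:=-v$ is a weak subsolution, and the bounds $h\leq v\leq 0$ translate into $0\leq w\leq h_-$ almost everywhere on $\R^n$. The hypothesis $h\in L^{p-1}_{sp}(\R^n)$ implies that both $\|h_-\|_{L^{p-1}(B)}$ on any bounded ball $B$ and $\mathrm{Tail}(h_-;z,\rho)$ are finite and bounded uniformly as $z$ varies in a compact subset of $\R^n$ and $\rho$ in a bounded positive range. Fix $x_0\in D$ and set $R:=\dist(D,\partial\Omega)/4$, so that $B_{2R}(x_0)\Subset\Omega$. For $R\leq\tau<t\leq 2R$ and $y\in B_\tau(x_0)$, I would apply Theorem~\ref{thm_local} (with $\delta=1$) to $w$ on $B_{t-\tau}(y)\subset B_t(x_0)$. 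Splitting the tail integral into the contributions from $B_{2R}(x_0)$ and its complement (where $|z-y|\geq|z-x_0|/2$), and using $w_+\leq h_-$, one obtains
\begin{equation*}
\mathrm{Tail}\bigl(w_+;y,(t-\tau)/2\bigr)\leq C\Bigl[(t-\tau)^{-n/(p-1)}\|h_-\|_{L^{p-1}(B_{2R}(x_0))}+\mathrm{Tail}(h_-;x_0,2R)\Bigr];
\end{equation*}
combining $\int w^p\leq(\esssup w)\int w^{p-1}$ with $w\leq h_-$ then yields
\begin{equation*}
\Bigl(\mean_{B_{t-\tau}(y)} w^p\Bigr)^{1/p}\leq\frac{C\,\phi(t)^{1/p}\,\|h_-\|_{L^{p-1}(B_{2R}(x_0))}^{(p-1)/p}}{(t-\tau)^{n/p}},
\end{equation*}
where $\phi(\rho):=\esssup_{B_\rho(x_0)}w$, which is finite for $\rho<2R$ by Theorem~\ref{thm_local} itself since $w\in L^p_{\mathrm{loc}}(\Omega)\cap L^{p-1}_{sp}(\R^n)$.

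Taking the essential supremum over $y\in B_\tau(x_0)$ and applying Young's inequality with conjugate exponents $p$ and $p/(p-1)$ to the mixed term produces, for any $\eta\in(0,1)$, the iteration inequality
\begin{equation*}
\phi(\tau)\leq\eta\,\phi(t)+\frac{C_\eta}{(t-\tau)^{n/(p-1)}}+C,\qquad R\leq\tau<t\leq 2R,
\end{equation*}
with constants depending on $n,p,s,\Lambda,h,R$ but \emph{not} on $x_0\in D$, thanks to the uniform control noted above. Choosing $\eta$ small enough and invoking the standard Giaquinta--Giusti iteration lemma then forces $\phi(R)\leq C(n,p,s,\Lambda,\Omega,D,h)$, that is $\essinf_{B_R(x_0)}v\geq -C$. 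A finite cover of $\overline D$ by balls of this form concludes the proof. The main technical obstacle is precisely that $h$ need not belong to $L^p_{\mathrm{loc}}$: one cannot plug $h$ into the $L^p$-term of the sup estimate, which forces the interpolation $L^p\leq L^\infty\cdot L^{p-1}$ and the subsequent absorption of the $\esssup$-term via Young's inequality and the iteration lemma.
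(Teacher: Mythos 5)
Your proof is correct and follows essentially the same route as the paper's: view $v_-=-v$ as a nonnegative subsolution, apply Theorem~\ref{thm_local} with $\delta=1$, bound the tail via $v_-\le h_-$, interpolate the $L^p$-average through $L^\infty\cdot L^{p-1}$, absorb the supremum by Young's inequality and a standard iteration lemma, and finish with a finite covering of $D$. The only imprecision is the geometric claim $|z-y|\ge |z-x_0|/2$ on all of $\R^n\setminus B_{2R}(x_0)$ for $y\in B_\tau(x_0)$ with $\tau$ up to $2R$, which can fail near $\partial B_{2R}(x_0)$; splitting the tail at $B_{4R}(x_0)$ instead (or using the sharper bound $|z-y|\ge c\,\frac{t-\tau}{t}\,|z-x_0|$, as the paper effectively does) repairs this without changing the rest of the argument.
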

\begin{proof}
Let $B_{2r}(x_0) \subset \Omega$. Let $1 \le \sigma' < \sigma \le 2$ and $\rho=(\sigma-\sigma')r/2$. Then $B_{2\rho}(z) \subset B_{\sigma r}(x_0) \subset \Omega$ for a point $z \in B_{\sigma'r}(x_0)$. Thus, using the fact that $v_-=-v\geq 0$ is a weak subsolution, we can apply the estimate in Theorem~\ref{thm_local} 
choosing the interpolation parameter $\delta=1$ there. We have
\[
\esssup_{B_\rho(z)} v_- \leq {\rm Tail}(v_-;z,\rho) + c\left(\mean{B_{2\rho}(z)}v_-^{p}(x)\dx\right)^{\frac1p}.
\]
Since $h \leq v$, the tail term can be estimated as follows
\begin{align*}
{\rm Tail}(v_-;z,\rho) &\le c \left(\rho^{sp} \int_{B_{\sigma r}(x_0) \setminus B_\rho(z)} h_-^{p-1}(x)|x-z|^{-n-sp}\dx \right)^{\frac1{p-1}} \\
&\quad + c \left(\rho^{sp} \int_{\R^{n} \setminus B_{\sigma r}(x_0)} h_-^{p-1}(x)|x-z|^{-n-sp}\dx \right)^{\frac1{p-1}} \\
&\le c \left(\rho^{sp} \int_{B_{\sigma r}(x_0)} h_-^{p-1}(x)\rho^{-n-sp}\dx \right)^{\frac1{p-1}} \\
&\quad + c \left(\rho^{sp} \int_{\R^{n} \setminus B_{\sigma r}(x_0)} h_-^{p-1}(x)\left(\frac{\rho}{\sigma r}|x-x_0|\right)^{-n-sp}\dx \right)^{\frac1{p-1}} \\
&\le c\left(\sigma-\sigma'\right)^{-\frac{n}{p-1}} \Bigg[\left(\mean{B_{\sigma r}(x_0)} h_-^{p-1}(x)\dx\right)^{\frac1{p-1}}
+ {\rm Tail}(h_-;x_0,\sigma r)\Bigg] \\
&\le c\left(\sigma-\sigma'\right)^{-\frac{n}{p-1}} \Bigg[\left(\mean{B_{2 r}(x_0)} h_-^{p-1}(x)\dx\right)^{\frac1{p-1}}
+ {\rm Tail}(h_-;x_0, r)\Bigg].
\end{align*}

For the average term, in turn,
\[
\frac{|B_{\sigma r}(x_0)|}{|B_{2\rho}(z)|} = \left(\frac{\sigma r}{2\rho}\right)^{n} = \left(\frac{\sigma}{\sigma-\sigma'}\right)^{n},
\]
and thus by Young's Inequality, we obtain
\begin{align*}
\left(\mean{B_{2\rho}(z)}v_-^{p}(x)\dx\right)^{\frac1p} &\leq c\left(\sigma-\sigma'\right)^{-\frac{n}{p}} \left(\mean{B_{\sigma r}(x_0)}v_-^{p}(x)\dx\right)^{\frac1p} \\*
&\leq c\left(\esssup_{B_{\sigma r}(x_0)}v_-\right)^{\frac1p}\left(\left(\sigma-\sigma'\right)^{-n}\mean{B_{2 r}(x_0)}h_-^{p-1}(x)\dx\right)^{\frac1p} \\*
&\leq \frac12 \esssup_{B_{\sigma r}(x_0)}v_- + c \left(\sigma-\sigma'\right)^{-\frac{n}{p-1}} \left(\mean{B_{2 r}(x_0)}h_-^{p-1}(x)\dx\right)^{\frac1{p-1}}.
\end{align*}
Since the estimates above hold for every $z \in B_{\sigma'r}(x_0)$, we have after combining the estimates for tail and average terms
\begin{eqnarray*}
&& \esssup_{B_{\sigma'r}} v_- \\*
&& \qquad \leq \, \frac12\esssup_{B_{\sigma r}}v_- + c\left(\sigma-\sigma'\right)^{-\frac{n}{p-1}} \Bigg[\left(\mean{B_{2 r}} h_-^{p-1}\dx\right)^{\frac1{p-1}}
+ {\rm Tail}(h_-;x_0, r)\Bigg].
\end{eqnarray*}
Now, a standard iteration argument yields 
\begin{align*}
\esssup_{B_{r}(x_0)} v_- \leq c \Bigg[\left(\mean{B_{2 r}(x_0)} h_-^{p-1}\dx\right)^{\frac1{p-1}}
+ {\rm Tail}(h_-;x_0, r)\Bigg],
\end{align*}
which is bounded 
since $h_- \in L^{p-1}_{sp}(\R^n)$.

To finish the proof, let $D \Subset \Omega$. We can cover $D$ by finitely many balls $B_{r_i}(x_i)$, $i=1,\dots,N$, with $B_{2r_i}(x_i) \subset \Omega$, and the claim follows since
\[
\essinf_{D}v \geq -\max_{1 \leq i \leq N}\esssup_{B_{r_i}(x_i)} v_- \geq -C. 
\]
\end{proof}

From Theorem~\ref{lem_caccio} we can deduce a  Caccioppoli-type estimate as in the following
\begin{lemma} \label{lemma:cacc for bnd super}
Let $M > 0$. Suppose that $u$ is a weak supersolution in $B_{2r}\equiv B_{2r}(z)$ such that $u \leq M$ in $B_{3r/2}$.  
Then, for a positive constant  $c \equiv c(n,p,s,\Lambda)$, it holds
\begin{equation}  \label{eq:cacc for bnd super}
\int_{B_{r}} \mean{B_{r}} \frac{|u(x)-u(y)|^p}{|x-y|^{n+sp}} \dxy \leq c\,r^{- sp} H^{p}, 
\end{equation}
where
\[
H := M + \bigg( \mean{B_{3r/2}} u_-^p(x) \dx \bigg)^{\frac1p} + {\rm Tail}(u_-;z,3r/2).
\]
\end{lemma}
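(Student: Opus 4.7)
\textbf{Proof proposal for Lemma~\ref{lemma:cacc for bnd super}.} The plan is to apply the Caccioppoli estimate with tail (Theorem~\ref{lem_caccio}) to the truncation $w_- := (u-M)_-$, chosen precisely so that the truncation matches the essential upper bound available on $B_{3r/2}$. Concretely, I would fix a cut‑off $\varphi \in C_0^\infty(B_{3r/2}(z))$ with $\varphi \equiv 1$ on $B_r(z)$, $\supp \varphi \subset B_{5r/4}(z)$, $0\le \varphi \le 1$ and $|\varphi(x)-\varphi(y)| \le c|x-y|/r$, and apply Theorem~\ref{lem_caccio} on the ball $B_{3r/2}(z) \subset B_{2r}(z)$ to the weak supersolution $u$ with level $k=M$. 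The hypothesis $u \le M$ a.e.\ in $B_{3r/2}$ gives $w_-(x) = M-u(x)$ there, and in particular $|w_-(x)-w_-(y)| = |u(x)-u(y)|$ for $x,y\in B_r$; globally we retain the pointwise estimate $w_- \le M + u_-$, which will be the workhorse on the right-hand side.

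For the left-hand side of~\eqref{cacio1}, I would drop the regions where $\varphi < 1$ and invoke the lower bound $K(x,y) \ge \Lambda^{-1}|x-y|^{-n-sp}$ to obtain
\[
\int_{B_{3r/2}}\!\int_{B_{3r/2}} K(x,y)|w_-(x)\varphi(x)-w_-(y)\varphi(y)|^p\dxy \ge \Lambda^{-1} \int_{B_r}\!\int_{B_r} \frac{|u(x)-u(y)|^p}{|x-y|^{n+sp}} \dxy.
\]
For the first term on the right-hand side of~\eqref{cacio1}, I would use $K(x,y)\le\Lambda|x-y|^{-n-sp}$ together with the Lipschitz bound on $\varphi$ and $(\max\{w_-(x),w_-(y)\})^p \le 2^p(w_-(x)^p+w_-(y)^p)$, reducing the double integral to a single one via the computation $\int_{B_{3r/2}} |x-y|^{p-sp-n}\dy \le c(n,s,p)\,r^{p-sp}$ (which converges at the singularity since $s<1$). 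After using $w_-^p \le c(M^p + u_-^p)$, this term is bounded by $c\,r^{n-sp}\bigl(M^p + \mean{B_{3r/2}} u_-^p\,\dx\bigr)$.

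For the nonlocal second term on the right-hand side of~\eqref{cacio1}, the geometric separation $|x-y|\gtrsim|x-z|$ when $y\in\supp\varphi \subset B_{5r/4}$ and $x\in\R^n\setminus B_{3r/2}(z)$ gives $K(x,y)\le c|x-z|^{-n-sp}$. Splitting $w_-^{p-1}(x) \le c(M^{p-1} + u_-^{p-1}(x))$ and recognising
\[
\int_{\R^n\setminus B_{3r/2}(z)} u_-^{p-1}(x)|x-z|^{-n-sp}\dx \le c\,r^{-sp}\,{\rm Tail}(u_-;z,3r/2)^{p-1},
\]
together with the trivial estimate $\int_{\R^n\setminus B_{3r/2}(z)}|x-z|^{-n-sp}\dx \le c\,r^{-sp}$, shows the supremum factor is at most $c\,r^{-sp}\bigl(M^{p-1}+{\rm Tail}(u_-;z,3r/2)^{p-1}\bigr)$. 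The remaining factor $\int_{B_{3r/2}} w_-\varphi^p\dx$ is controlled by $c\,r^n\bigl(M + (\mean{B_{3r/2}} u_-^p\dx)^{1/p}\bigr)$ via H\"older. Multiplying and using the elementary inequality $(a_1+a_2)(a_3^{p-1}+a_4^{p-1}) \le c\,(a_1+a_2+a_3+a_4)^p$ collapses everything into $c\,r^{n-sp}H^p$.

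Combining the two sides and dividing by $|B_r|\asymp r^n$ (equivalently, swapping $\int_{B_r}\!\int_{B_r}$ with $\int_{B_r}\mean{B_r}$ via Fubini) yields~\eqref{eq:cacc for bnd super}. The only non-routine point I anticipate is the bookkeeping that ties the local $L^p$-norm of $w_-$ on $B_{3r/2}$ and the long-range tail of $u_-$ into the single quantity $H^p$; the $H$ is pre-built exactly to absorb $M$, the local $u_-^p$-average, and the tail, so the final algebraic step is essentially forced.
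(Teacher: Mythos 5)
Your proof is correct, and all the estimates check out: the lower bound on the left-hand side of \eqref{cacio1} restricted to $B_r\times B_r$ (where $\varphi\equiv 1$ and, thanks to $u\le M$ a.e.\ in $B_{3r/2}$, $w_-=M-u$ so that $|w_-(x)-w_-(y)|=|u(x)-u(y)|$), the bound $w_-\le M+u_-$, the one-variable integration of $|x-y|^{p-sp-n}$, the geometric separation $|x-y|\gtrsim|x-z|$ for $y\in\supp\varphi$ and $x\notin B_{3r/2}$, the identification of the tail, and the final absorption into $H^p$ are all sound, and the application of Theorem~\ref{lem_caccio} at level $k=M$ on the ball $B_{3r/2}\subset B_{2r}$ is legitimate. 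The overall strategy is the same Caccioppoli-with-tail argument as in the paper, but the packaging differs in one respect worth noting: the paper does not invoke Theorem~\ref{lem_caccio} as a black box; instead it tests the weak formulation directly with $(2H-u)\phi^p$, $\phi\in C_0^\infty(B_{4r/3})$, and imports the intermediate display (3.4) from~\cite{DKP15}, which yields the sharper lower bound with the factor $\big(\max\{\phi(x),\phi(y)\}\big)^p$ multiplying the full Gagliardo integrand, plus a term $(2H-u)^p|\phi(x)-\phi(y)|^p$; the elevated level $2H$ keeps the test function strictly positive on $B_{3r/2}$ and lets $H$ absorb the local average and tail in one stroke. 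Your route, using the stated Caccioppoli estimate at the natural level $M$ and then converting $w_-$-differences into $u$-differences only on the set where the cut-off equals one, avoids redoing the testing computation and the reference to the external display, at the (harmless here) cost of discarding the region where $\varphi<1$; for the purposes of \eqref{eq:cacc for bnd super} the two arguments deliver the same estimate with the same constant dependence.
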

\begin{proof}
Let $\phi \in C_0^\infty(B_{4r/3})$ such that $0 \leq \phi \leq 1$, $\phi = 1$ in $B_r$, and $|D\phi| \leq c/r$.  
Setting $w := 2H-u$, we get
\begin{align}\label{ai0}
0 &\leq \frac1{|B_{r}|}\int_{\R^n} \int_{\R^n} L(u(x),u(y))
\big(w(x)\phi^p(x) - w(y)\phi^p(y) \big) K(x,y) \dxy \nonumber\\*[1ex]
&= - \frac1{|B_r|} \int_{B_{3r/2}} \int_{B_{3r/2} } L(w(x),w(y))\big(w(x) \phi^p(x) - w(y)\phi^p(y) \big) K(x,y) \dxy \nonumber\\* 
&\quad + \frac2{|B_r|} \int_{\R^n \setminus B_{3r/2} } \int_{B_{3r/2}} L(u(x),u(y)) w(x) \phi^p(x) {K}(x,y) \dxy \nonumber\\*[1ex]
&=: -I_1 + 2I_2.
\end{align}
Following the proof of Theorem~\ref{lem_caccio}, we can deduce, according to (3.4) in \cite{DKP15}, that
\begin{align}\label{ai1}
I_1 &\geq \frac1c \int_{B_{3r/2}} \mean{B_{3r/2}} \frac{|u(x)-u(y)|^p}{|x-y|^{n+sp}}\big(\max\big\{\phi(x),\,\phi(y)\big\}\big)^{p} \dxy \nonumber\\
&\quad  - c \int_{B_{3r/2}} \mean{B_{3r/2}} \big(2H-u(x)\big)^p \frac{|\phi(x)-\phi(y)|^p}{|x-y|^{n+sp}} \dxy \nonumber \\[1ex]
&\geq \frac1c \int_{B_{r}} \mean{B_{r}} \frac{|u(x)-u(y)|^p}{|x-y|^{n+sp}} \dxy 
- c\,r^{-sp} H^p.
\end{align}
Furthermore,
\begin{align}\label{ai2}
I_2 &\leq c \int_{\R^{n} \setminus B_{3r/2}} \mean{B_{4r/3}}\big(u(x)-u(y)\big)_+^{p-1}\big(2H-u(x)\big)|x-y|^{-n-sp} \dxy \nonumber\\
&\leq c \int_{\R^{n} \setminus B_{3r/2}} \mean{B_{4r/3}}\big(H^{p-1}+u_-^{p-1}(y)\big)\big(2H+u_-(x)\big)|y-z|^{-n-sp} \dxy \nonumber\\
&\leq c\,r^{-sp} H^p + c\,H \int_{\R^n \setminus B_{3r/2}} u_-^{p-1}(y) |y-z|^{-n-sp} \dy \nonumber\\
&\leq c\,r^{-sp} H^p,
\end{align}
where, in particular, we used Jensen's Inequality to estimate
$$
\mean{B_{4r/3}} u_-(x) \dx \leq \bigg( \mean{B_{4r/3}} u_-^p(x) \dx\bigg)^{\frac1p} \leq c\,H.
$$
By combining~\eqref{ai0} with~\eqref{ai1} and~\eqref{ai2}, we plainly obtain the estimate in~\eqref{eq:cacc for bnd super}.
\end{proof}

Using the previous result we may prove a uniform bound in $W^{s,p}$.

\begin{lemma} \label{seminorm bound}
Let $M>0$ and let $h \in L^{p-1}_{sp}(\R^n)$ with $h \leq M$ almost everywhere in $\Omega$. Let $u$ be a weak supersolution in $\Omega$ such that $u \geq h$ almost everywhere in $\R^{n}$ and $u \leq M$ almost everywhere in $\Omega$.
Then, for all $D \Subset \Omega$ there is a constant $C \equiv C(n,p,s,\Lambda,\Omega,D,M,h)$ such
that
\begin{equation}\label{eq_gag}
\int_{D} \int_{D} \frac{|u(x)-u(y)|^p}{ |x-y|^{n+sp}} \dxy \leq C.
\end{equation}
\end{lemma}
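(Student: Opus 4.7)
The plan is to combine the Caccioppoli estimate with tail of Lemma~\ref{lemma:cacc for bnd super}, applied on a finite cover of $D$ by small balls, with a preliminary essential $L^\infty$ bound for $u$ on a neighbourhood of $D$. The hypothesis yields only $u\leq M$ in $\Om$, whereas Lemma~\ref{hM bound} requires the upper bound to hold throughout $\R^n$; the main obstacle is therefore to pass from this local bound to a global one, which I would achieve by truncation.

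First I would fix $D'$ with $D\Subset D'\Subset\Om$ and introduce $\tilde u:=\min\{u,M\}$ and $\tilde h:=\min\{h,M\}-M$. By Theorem~\ref{min(u,k)}, $\tilde u$ is a weak supersolution in $\Om$, and now $\tilde u\leq M$ on the whole of $\R^n$. An elementary bound gives $|\tilde h|^{p-1}\leq c(h_-^{p-1}+M^{p-1})$, so $\tilde h\in L^{p-1}_{sp}(\R^n)$, while $\tilde h\leq \tilde u-M\leq 0$ a.e.\ in $\R^n$ (using $u\geq h$). Applying Lemma~\ref{hM bound} to $v:=\tilde u-M$ with ``obstacle'' $\tilde h$ yields $\essinf_{D'}\tilde u\geq M-C_0$, and since $\tilde u\leq u$ the same lower bound holds for $u$. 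Combined with $u\leq M$ in $\Om$, this gives $|u|\leq M_1$ a.e.\ in $D'$ for some $M_1$ depending on $n,p,s,\Lambda,\Om,D',M,h$.

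Next, I would choose $r>0$ so small that $B_{2r}(x)\subset D'$ for every $x\in D$, and cover $D$ by finitely many balls $\{B_{r/2}(x_i)\}_{i=1}^{N}$ with $x_i\in D$. Lemma~\ref{lemma:cacc for bnd super} applied in each $B_r(x_i)$ (the hypothesis $u\leq M$ on $B_{3r/2}(x_i)\subset\Om$ being automatic) gives
\[
\int_{B_r(x_i)}\int_{B_r(x_i)}\frac{|u(x)-u(y)|^p}{|x-y|^{n+sp}}\dxy \,\leq\, c\,|B_r|\,r^{-sp}\,H_i^{\,p},
\]
where $H_i=M+\bigl(\mean{B_{3r/2}(x_i)}u_-^p\dx\bigr)^{1/p}+{\rm Tail}(u_-;x_i,3r/2)$. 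Since $u_-\leq M_1$ in $B_{3r/2}(x_i)\subset D'$, the average is at most $M_1$; splitting the tail integral into the part over $D'\setminus B_{3r/2}(x_i)$, where $u_-\leq M_1$, and the part over $\R^n\setminus D'$, where $u_-\leq h_-$ thanks to $u\geq h$ a.e.\ in $\R^n$, we get ${\rm Tail}(u_-;x_i,3r/2)\leq c\bigl(M_1+{\rm Tail}(h_-;x_i,3r/2)\bigr)$. Hence all the $H_i$ are bounded uniformly in $i$.

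To conclude, I would decompose $D\times D$ according to whether $|x-y|<r/2$ or $|x-y|\geq r/2$. On the far region the integrand is bounded by $(2M_1)^p(r/2)^{-n-sp}$, using $|u|\leq M_1$ on $D\subset D'$, so the resulting integral is finite. On the near region, each pair $(x,y)$ with $x\in B_{r/2}(x_i)$ satisfies $y\in B_r(x_i)$, hence the contribution is dominated by $\sum_{i=1}^{N}\int_{B_r(x_i)}\int_{B_r(x_i)}\cdots\dxy$, which is controlled by the previous paragraph. Adding the two pieces gives~\rif{eq_gag}.
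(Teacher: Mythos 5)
Your proof is correct and follows essentially the same route as the paper: a local $L^\infty$ bound from Lemma~\ref{hM bound}, the Caccioppoli-type estimate of Lemma~\ref{lemma:cacc for bnd super} applied on a finite cover by balls, and a near-diagonal/far splitting of $D\times D$. The only difference is that you make explicit, via the truncations $\min\{u,M\}-M$ and $\min\{h,M\}-M$, the reduction needed to invoke Lemma~\ref{hM bound} (whose hypotheses require nonpositivity a.e.\ in all of $\R^n$, while here $u\leq M$ only in $\Omega$), a point the paper's proof passes over in a single sentence.
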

\begin{proof}
Let $D \Subset \Omega$ and denote $d:= \dist(D,\partial \Omega)>0$. We can cover the diagonal
$\mathcal{D} := \left\{(x,y) \in D \times D \,:\, |x-y|<\frac{d}{4}\right\}$ of $D \times D$ with finitely many sets of the form $B_{d/2}(z_i) \times B_{d/2}(z_i)$, $i=1,\dots,N$, such that $B_{d}(z_i) \subset \Omega$. By Lemma~\ref{hM bound} we can assume that $u$ is essentially bounded in $D$ by a constant independent of $u$. Since $u \leq M$ is a weak supersolution in $B_d(z_i)$ and $u \geq h \in L^{p-1}_{sp}(\R^n)$, we have by Lemma~\ref{lemma:cacc for bnd super} that
\[
\int_{B_{d/2}(z_i)} \int_{B_{d/2}(z_i)} \frac{|u(x)-u(y)|^p}{|x-y|^{n+sp}} \dxy \leq C'
\]
for every $i=1,\dots,N$, where $C' \equiv C'(n,p,s,\Lambda,d,M,h)$. Thus, we can split the integral in~\eqref{eq_gag} as follows
\begin{align*}\label{gag0}
\int_{D} \int_{D} \frac{|u(x)-u(y)|^p}{ |x-y|^{n+sp}} \dxy  &\leq 
 \sum_{i=1}^{N} \int_{B_{d/2}(z_i)} \int_{B_{d/2}(z_i)} \frac{|u(x)-u(y)|^p}{|x-y|^{n+sp}} \dxy \\
& \qquad
+  \iint_{(D\times D) \setminus \mathcal D} \frac{|u(x)-u(y)|^p}{ |x-y|^{n+sp}} \dxy.
\end{align*}
Now, notice that the first term in the right-hand side of the preceding inequality is bounded from above by
\begin{equation*}\label{gag1}
\sum_{i=1}^{N} \int_{B_{d/2}(z_i)} \int_{B_{d/2}(z_i)} \frac{|u(x)-u(y)|^p}{|x-y|^{n+sp}} \dxy
\leq NC';
\end{equation*}
and the second term by
\begin{equation*}\label{gag2}
\int_{D} \int_{D} \frac{|u(x)-u(y)|^p}{(d/4)^{n+sp}} \dxy
\leq C''|D|^{2}
\end{equation*}
according to the definition of $\mathcal D$, with $C''$ independent of $u$. Combining last three displays yields~\eqref{eq_gag}.
\end{proof}

\subsection{Comparison principle for weak solutions}\label{sec_comparison}

We next prove a comparison principle for weak sub- and supersolution, which   
typically constitutes a  powerful tool, playing a fundamental role in the whole PDE theory. 

\begin{lemma}[{\bf Comparison Principle}]\label{comp principle}
Let $\Omega \Subset \Omega'$ be bounded open subsets of $\R^n$. Let $u \in W^{s,p}(\Omega')$
be a weak supersolution to~\eqref{problema} in $\Omega$, and let 
$v \in W^{s,p}(\Omega')$
be a weak subsolution to~\eqref{problema} in $\Omega$ such that $u \ge v$ almost everywhere in $\R^n \setminus \Omega$. Then $u \ge v$ almost everywhere in $\Omega$ as well. 
\end{lemma}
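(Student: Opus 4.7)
The plan is to insert $\eta := (v-u)_+$ as a test function in both weak formulations, subtract, and exploit the strict monotonicity of $t \mapsto |t|^{p-2}t$ encoded in \eqref{a-b positive}. Since $u \ge v$ almost everywhere on $\R^n\setminus\Omega$, the function $\eta$ vanishes a.e.\ outside $\Omega$; combined with $u,v\in W^{s,p}(\Omega')$ and $\Omega\Subset\Omega'$, the characterization recalled after Definition~\ref{def_supersolution} yields $\eta\in W^{s,p}_0(\Omega)$. Since by Lemma~\ref{l.tail in control} both $u$ and $v$ belong to $L^{p-1}_{sp}(\R^n)$, the estimate in Remark~\ref{rem_superdef} shows that all the double integrals on $\R^n\times\R^n$ appearing below are absolutely convergent; approximating $\eta$ in $W^{s,p}$ by functions in $W^{s,p}_0(D_k)$ with $D_k\Subset\Omega$ then justifies testing the supersolution inequality for $u$ and the subsolution inequality for $v$ against $\eta$.

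Subtracting the two resulting inequalities yields
\[
\mathcal{I} \;:=\; \int_{\R^n}\int_{\R^n}\bigl[L(u(x),u(y)) - L(v(x),v(y))\bigr]\bigl(\eta(x)-\eta(y)\bigr)K(x,y)\dxy \;\ge\; 0.
\]
I would split $\R^n\times\R^n$ along the set $A := \{v > u\}\subset\Omega$ into the four pieces $A\times A$, $A\times A^c$, $A^c\times A$, $A^c\times A^c$ and check pointwise signs. Writing $a := u(x)-u(y)$ and $b := v(x)-v(y)$, on $A\times A$ one has $\eta(x)-\eta(y) = b-a$, so the integrand equals $-K(x,y)\bigl(|a|^{p-2}a - |b|^{p-2}b\bigr)(a-b) \le 0$ by \eqref{a-b positive}. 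On $A\times A^c$ one has $v(x) > u(x)$ and $v(y)\le u(y)$, hence $b > a$, so the strict monotonicity of $t\mapsto|t|^{p-2}t$ gives $L(u(x),u(y)) - L(v(x),v(y)) < 0$; together with $\eta(x)-\eta(y) = v(x)-u(x) > 0$, the integrand is strictly negative. The piece $A^c\times A$ is handled symmetrically, and on $A^c\times A^c$ the factor $\eta(x)-\eta(y)$ vanishes.

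Combining $\mathcal{I}\ge 0$ with the pointwise $\le 0$ bound forces the integrand to vanish almost everywhere. In particular it must vanish on $A\times A^c$, but there it is \emph{strictly} negative wherever $K$ is positive, which happens almost everywhere by \eqref{hp_k}. Hence the product measure $|A|\cdot|A^c|$ must be zero, and since $\R^n\setminus\Omega\subset A^c$ has infinite measure (as $\Omega$ is bounded), we conclude $|A| = 0$, i.e.\ $u\ge v$ almost everywhere in $\Omega$. The case analysis is a direct consequence of \eqref{a-b positive} and the strict monotonicity of $t\mapsto|t|^{p-2}t$, so I expect the main technical hurdle to be the rigorous admissibility of $\eta$ as a test function: one must carefully combine the global regularity $u,v\in W^{s,p}(\Omega')$, the tail integrability from Lemma~\ref{l.tail in control}, and a density argument passing from $C_0^\infty(\Omega)$ test functions to $W^{s,p}_0(\Omega)$ ones, while keeping the double integrals over $\R^n\times\R^n$ absolutely convergent via Remark~\ref{rem_superdef}.
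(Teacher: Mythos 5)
Your proposal is correct and follows essentially the same route as the paper's proof: the test function $(v-u)_+=(u-v)_-$ (admissible in $W^{s,p}_0(\Omega)$ thanks to $u,v\in W^{s,p}(\Omega')$), summing the two weak formulations, splitting $\R^n\times\R^n$ according to $\{v>u\}$ versus its complement, and invoking the monotonicity in~\eqref{a-b positive}. Your endgame, deducing $|\{v>u\}|=0$ from the strict negativity of the cross terms together with the infinite measure of $\{v\le u\}\supset\R^n\setminus\Omega$ (up to a null set), is just a slightly more explicit rendering of the paper's conclusion that all terms vanish and hence $\eta=0$ a.e.\ on $\{u<v\}$.
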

\begin{proof}
Consider the function $\eta := (u-v)_-$. Notice that $\eta$ is a nonnegative function in $W^{s,p}_0(\Omega)$.  
For this, we can use it as a test function in~\eqref{supersolution} for both $u, v\in W^{s,p}(\Omega')$ and, by summing up, we get
\begin{align}\label{50}
0 &\le \int_{\R^n}\int_{\R^n}|u(x)-u(y)|^{p-2}\big(u(x)-u(y)\big)\big(\eta(x)-\eta(y)\big)K(x,y)\dxy  \\
&\quad - \int_{\R^n}\int_{\R^n} |v(x)-v(y)|^{p-2}\big(v(x)-v(y)\big)\big(\eta(x)-\eta(y)\big)K(x,y)\dxy. \nonumber
\end{align}
It is now convenient to split the integrals above by partitioning the whole~$\R^n$ into separate sets comparing the values of $u$ with those of $v$, so that, from~\eqref{50} we get
\begin{align}\label{51}
0 &\leq \int_{\{u<v\}}\int_{\{u<v\}} \big(L(u(x),u(y))-L(v(x),v(y))\big)\big(\eta(x)-\eta(y)\big)K(x,y)\dxy  \\
&\quad + \int_{\{u \ge v\}}\int_{\{u<v\}} \big(L(u(x),u(y))-L(v(x),v(y))\big)\eta(x)K(x,y)\dxy  \nonumber \\
&\quad - \int_{\{u<v\}}\int_{\{u \ge v\}} \big(L(u(x),u(y))-L(v(x),v(y))\big)\eta(y)K(x,y)\dxy. \nonumber
\end{align}
The goal is now to prove that the right-hand side of the inequality above is nonpositive. In view of the very definition of $\eta$ and \eqref{a-b positive}, 
we can estimate the three terms in~\eqref{51} as follows
\begin{align}\label{52}
 [ ... ]& \leq - \int_{\{u<v\}}\int_{\{u<v\}} \big(L(u(x),u(y))-L(v(x),v(y))\big) \nonumber \\
&\qquad\qquad\qquad \times \big(u(x)-u(y)-v(x)+v(y)\big)K(x,y)\dxy \nonumber \\
&\quad + \int_{\{u \ge v\}}\int_{\{u<v\}} \big(L(v(x),v(y))-L(v(x),v(y))\big)\eta(x)K(x,y)\dxy \nonumber \\
&\quad - \int_{\{u<v\}}\int_{\{u \ge v\}} \big(L(v(x),v(y))-L(v(x),v(y))\big)\eta(y)K(x,y)\dxy\nonumber  \\[1ex]
&\leq 0. 
\end{align} 
By combining~\eqref{52} with~\eqref{51}, we deduce that all the terms in~\eqref{51} have to be equal to $0$,
which implies $\eta = 0$ almost everywhere in $\{u<v\}$, in turn giving the desired result.
\end{proof}

In particular, since the weak sub- and supersolutions belong locally to $W^{s,p}$, we get the following comparison principle.
\begin{corollary} \label{comp principle2}
Let $D \Subset \Omega$. Let $u$ 
be a weak supersolution to~\eqref{problema} in $\Omega$, and let 
$v$ 
be a weak subsolution to~\eqref{problema} in $\Omega$ such that $u \ge v$ almost everywhere in $\R^n \setminus D$. Then $u \ge v$ almost everywhere in $D$.
\end{corollary}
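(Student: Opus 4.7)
The plan is to reduce Corollary \ref{comp principle2} to a direct application of Lemma \ref{comp principle} via a localization argument. The only gap between the two statements is that Lemma \ref{comp principle} requires the sub- and supersolutions to belong globally to $W^{s,p}(\Omega')$ for some $\Omega' \Supset \Omega$, whereas in the corollary we only know that $u,v \in W^{s,p}_{\rm loc}(\Omega)$ from Definition~\ref{def_supersolution}. This is bridged by choosing an intermediate neighborhood.

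Specifically, I would pick an open set $D'$ with $D \Subset D' \Subset \Omega$; such a choice is possible since $D$ is compactly contained in $\Omega$. Because $u$ and $v$ are a weak supersolution and subsolution in $\Omega$, respectively, they belong to $W^{s,p}_{\rm loc}(\Omega)$, and since $\overline{D'}$ is compact in $\Omega$ this yields $u,v \in W^{s,p}(D')$. Moreover, restriction preserves the sub/supersolution property, so $u$ is a weak supersolution in $D$ and $v$ is a weak subsolution in $D$.

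With this setup, Lemma \ref{comp principle} is applied with the roles of $\Omega$ and $\Omega'$ played by $D$ and $D'$, respectively. The hypothesis $u \geq v$ almost everywhere on $\R^n \setminus D$, which is assumed in the corollary, is exactly the boundary/complement condition required by Lemma \ref{comp principle}. Therefore the lemma yields $u \geq v$ almost everywhere in $D$, which is the desired conclusion.

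There is no genuine obstacle here, since all the analytic work has already been carried out in Lemma \ref{comp principle}; the only point requiring a word of justification is the passage from $W^{s,p}_{\rm loc}(\Omega)$ to $W^{s,p}(D')$, which is immediate from the definition of the local Sobolev space together with $D' \Subset \Omega$.
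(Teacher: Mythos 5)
Your proposal is correct and matches the paper's (implicit) argument: the corollary is stated as an immediate consequence of Lemma~\ref{comp principle}, precisely because weak sub- and supersolutions lie in $W^{s,p}_{\rm loc}(\Omega)$, so one applies the lemma with an intermediate set $D'$, $D \Subset D' \Subset \Omega$, exactly as you do. The only point worth noting is that restriction trivially preserves the sub/supersolution property (test functions in $C_0^\infty(D)$ are admissible in $\Omega$) and that the tail-space conditions in Definition~\ref{def_supersolution} are inherited, both of which your argument implicitly covers.
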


\subsection{Lower semicontinuity of weak supersolutions}\label{sec_lower}

Now, we give an expected lower semicontinuity result for the weak supersolutions, which, as in the classic local setting, is a fundamental object to provide other important topological tools in order to develop the entire nonlinear Potential Theory. As we can see in the proof below, we will be able to obtain such a  
property essentially via the supremum estimates given by Theorem~\ref{thm_local} performing here a careful choice of the interpolation parameter~$\delta$ in~\eqref{sup_estimate} between the local contributions and the nonlocal ones. This is a relevant difference with respect to the classical nonlinear Potential Theory, where on the contrary the lower semicontinuity is a straight consequence of weak Harnack estimates (see, e.~\!g., \cite[Theorem 3.51 and 3.63]{HKM06}).
\begin{theorem}[{\bf Lower semicontinuity of supersolutions}] \label{lsc representative}
Let $u$ be a weak supersolution in $\Omega$. Then
\[
u(x)=\essliminf_{y \to x} u(y) \qquad \text{for a.\!~e. } x \in \Omega.
\] 
In particular, $u$ has a lower semicontinuous representative.
\end{theorem}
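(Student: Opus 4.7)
The plan is to take $u^{*}(x) := \essliminf_{y \to x} u(y)$ as the candidate lower semicontinuous representative. Being the supremum over $r > 0$ of the (nonincreasing in $r$) quantities $\essinf_{B_{r}(x)} u$, the function $u^{*}$ is automatically lower semicontinuous on $\Omega$, so the task reduces to showing $u^{*}(x) = u(x)$ for a.e.\ $x \in \Omega$. The inequality $u^{*}(x) \le u(x)$ is essentially free of PDE content: at any Lebesgue point $x_{0}$ of $u$, Chebyshev's inequality gives that, for every $\varepsilon > 0$, the set $\{u \le u(x_{0})+\varepsilon\} \cap B_{r}(x_{0})$ has density one as $r \to 0^{+}$, so $\essinf_{B_{r}(x_{0})} u \le u(x_{0})+\varepsilon$ for every small $r$; letting $\varepsilon \searrow 0$ yields $u^{*}(x_{0}) \le u(x_{0})$.

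For the reverse inequality $u^{*}(x_{0}) \ge u(x_{0})$, fix a point $x_{0}$ which is a Lebesgue point of $(u-k)_{-}^{p}$ for every rational $k$ and pick a rational $k < u(x_{0})$. It suffices to prove $\esssup_{B_{r/2}(x_{0})} (u-k)_{-} \to 0$ as $r \to 0^{+}$, since this gives $\essinf_{B_{r/2}(x_{0})} u \ge k$ in the limit and hence $u^{*}(x_{0}) \ge k$; letting $k \nearrow u(x_{0})$ closes the argument. As the operator depends only on differences, $u-k$ is again a weak supersolution; by the truncation property of Theorem~\ref{min(u,k)}, the nonnegative function $v := (u-k)_{-}$ is then a weak subsolution in $\Omega$. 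For every ball $B_{r}(x_{0}) \subset \Omega$ and every $\delta \in (0,1]$, the sup estimate of Theorem~\ref{thm_local} applied to $v$ reads
\begin{equation*}
\esssup_{B_{r/2}(x_{0})} v \,\le\, \delta\,{\rm Tail}(v;x_{0},r/2) + c\,\delta^{-\gamma}\bigg(\mean{B_{r}(x_{0})} v^{p}\dx\bigg)^{\frac{1}{p}}.
\end{equation*}

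The whole argument hinges on the interpolation in $\delta$, and the main obstacle is to bound ${\rm Tail}(v;x_{0},r/2)$ by a constant $T$ \emph{uniformly} in small $r$. Here Lemma~\ref{hM bound} enters crucially: it yields a constant $M$ with $u \ge -M$ almost everywhere on some fixed $B_{R}(x_{0}) \Subset \Omega$, so $v \le M + |k|$ there. Splitting the tail integral into the annular region $B_{R}(x_{0}) \setminus B_{r/2}(x_{0})$ and the far-away region $\R^{n} \setminus B_{R}(x_{0})$, the first piece is controlled by a constant via the elementary $r^{sp}\int_{r/2}^{R}\rho^{-1-sp}\,d\rho \le c$, while the second is dominated by $c\,(r/R)^{sp}\bigl({\rm Tail}(u_{-};x_{0},R)^{p-1}+|k|^{p-1}\bigr)$, which vanishes as $r \to 0^{+}$ by the assumption $u_{-} \in L^{p-1}_{sp}(\R^{n})$. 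Given $\varepsilon > 0$, I first pick $\delta$ so small that $\delta T < \varepsilon/2$; since $x_{0}$ is a Lebesgue point of $v^{p}$ and $v(x_{0}) = 0$, I then shrink $r$ so that $c\,\delta^{-\gamma}(\mean{B_{r}(x_{0})} v^{p}\dx)^{1/p} < \varepsilon/2$, forcing $\esssup_{B_{r/2}(x_{0})} v < \varepsilon$. This delicate interplay between the uniformly bounded nonlocal tail and the Lebesgue vanishing of the local average, enabled precisely by the $\delta$-interpolation in the sup estimate, is the nonlocal nonlinear ingredient replacing the weak Harnack argument from the classical local theory.
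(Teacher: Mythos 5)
Your proof is correct and takes essentially the same route as the paper's: the supremum estimate of Theorem~\ref{thm_local} with the interpolation parameter $\delta$, a tail bound made uniform in small $r$ by combining the local lower bound from Lemma~\ref{hM bound} with $u_-\in L^{p-1}_{sp}(\R^n)$, and the Lebesgue point property to make the local average term vanish. The only cosmetic difference is that you compare at rational levels $k<u(x_0)$ and pass through the truncation Lemma~\ref{min(u,k)} (whose proof is independent, so there is no circularity), whereas the paper works directly with the subsolution $u(x_0)-u$ and so does not need that lemma.
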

\begin{proof}
Let $D \Subset \Omega$ and
\[
E:=\bigg\{x\in D \, : \,  \lim_{r \to 0} \mean{B_r(x)}|u(x)-u(y)| \dy = 0,\, |u(x)| < \infty \bigg\}.
\]
Then, in particular, $|D \setminus E|=0$ by Lebesgue's Theorem.
Fix $z \in E$ and $\tilde{r}>0$. We may assume $B_{2\tilde r}(z) \Subset \Omega$. 
Since $v := u(z)-u$ is a weak subsolution,  we have by Theorem~\ref{thm_local} that
\begin{align} \label{esssup v}
\esssup_{B_{r}(z)} v 
 \leq
 \delta\,{\rm Tail}(v_+;z,r)+c\,\delta^{-\gamma}\left(\mean{B_{2r}(z)} v_+^p\dx \right)^{1/p} 
\end{align}
whenever $r \leq \tilde r$ and $\delta \in (0,1]$, where ${\rm Tail}$ is defined in~\eqref{def_tail} and positive constants $\gamma$ and $c$ are both independent of $u$, $r$, $z$ and $\delta$.  
Firstly, by the triangle inequality $v_+ \leq |u(z)|+ u_-$ so that we immediately have
\[
\sup_{r \in (0,\tilde r)} {\rm Tail}(v_+;z,r) \leq  c\,|u(z)|+c \sup_{r \in (0,\tilde r)} {\rm Tail}(u_-;z,r).
\]
Also, for some constant $c$ independent of $u$, $r$ and $z$, we can write
\begin{align*}
\sup_{r \in (0,\tilde r)} {\rm Tail}(v_+;z,r) &
\leq c\,|u(z)| + c \left(\tilde r^{sp} \int_{\R^n \setminus B_{\tilde r}(z)}|u_-(x)|^{p-1}|x-z|^{-n-sp}\dx \right)^{\frac1{p-1}} \\
&\quad + c \sup_{r \in (0,\tilde r)}\left(r^{sp} \int_{B_{\tilde r}(z) \setminus B_r(z)}|u_-(x)|^{p-1}|x-z|^{-n-sp}\dx \right)^{\frac1{p-1}} \\[1ex]
&\leq c\,|u(z)|+c\,{\rm Tail}(u_-;z,\tilde r) + c\esssup_{B_{\tilde r}(z)} u_-
=:M, 
\end{align*} where $M$ is finite.
Indeed, one can use the fact that $z \in E$, that $u_-$ belongs to the tail space $L^{p-1}_{sp}(\R^n)$, and that $u$ is locally essentially bounded from below in view of Lemma~\ref{hM bound}.

Now, a key-point in the present proof does consist in taking advantage of the ductility of the estimate in~\eqref{sup_estimate}, which permits us to suitably choose the parameter~$\delta$~there in order to interpolate the contribution given by the local and nonlocal terms. For this, given $\eps>0$ we choose $\delta<{\varepsilon}/{2M}$ and thus we get
\begin{align} \label{esssup v tail}
\delta\,{\rm Tail}(v_+;z,r) < \frac{\varepsilon}{2}
\end{align}
whenever $r\in(0,\tilde r)$. 

Then we estimate the term with an integral average.
Since $z \in E$ and $u$ is locally essentially bounded from below,
\[
\mean{B_{2r}(z)} \big(u(z)-u(x)\big)^p_+\dx \le \esssup_{x\in B_{2\tilde r}(z)}\big(u(z)-u(x)\big)^{p-1}_+ \mean{B_{2r}(z)} |u(z)-u(x)|\dx \to 0
\]
as $r \to 0$.  
Thus, we can choose $r_\eps \in (0,\tilde r)$ 
such that
\begin{align} \label{esssu v average}
c\,\delta^{-\gamma}\left(\mean{B_{2r_\varepsilon}(z)} \big(u(z)-u(x)\big)^p_+\dx \right)^{1/p} < \frac{\varepsilon}{2}.
\end{align}
Combining the estimates~\eqref{esssup v},~\eqref{esssup v tail}, and~\eqref{esssu v average}, it follows
\[
\esssup_{B_{r_\varepsilon}(z)}\big(u(z)-u\big) \le \varepsilon,
\]
and consequently
\[
u(z) \leq \essinf_{B_{r_\varepsilon}(z)}u+\varepsilon = \essliminf_{y \to z}u(y) + \eps.
\]
Letting $\eps \to 0$ gives
\[
u(z) \leq  \essliminf_{y \to z}u(y).
\]

The reverse inequality will follow because $z$ is a Lebesgue point:
\[
u(z) =  \lim_{r\to 0} \mean{B_r(z)} u(x) \dx
 \geq \lim_{r \to 0} \essinf_{B_r(z)} u =  \essliminf_{y \to z}u(y),
\]
and thus the claim holds for $z \in E$.
Finally, since $D \Subset \Omega$ was arbitrary, the proof is complete.
\end{proof}

\subsection{Convergence results for weak supersolutions}\label{sec_convergence}

We begin with an elementary result showing that a truncation of a weak supersolution is still a weak supersolution. 
\begin{lemma}\label{min(u,k)} 
Suppose that $u$ is a weak supersolution in $\Omega$. Then, for $k \in \R$, $\min\{u,k\}$ is a weak supersolution in $\Omega$ as well.
\end{lemma}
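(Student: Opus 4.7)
Set $v := \min\{u, k\}$. The definition of weak supersolution requires three things: $v \in W^{s,p}_{\rm loc}(\Omega)$, $v_- \in L^{p-1}_{sp}(\R^n)$, and the integral inequality against nonnegative test functions. The first is immediate from the 1-Lipschitz contraction $t \mapsto \min\{t, k\}$. The second follows from $v_- \leq u_- + |k|$, since both $u_-$ (by hypothesis) and the constant $|k|$ lie in $L^{p-1}_{sp}(\R^n)$ (the tail of a constant being finite).

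For the inequality, which is a local property, the plan is to work on an arbitrary $D \Subset D' \Subset \Omega$. Thanks to Lemma~\ref{l.tail in control} and truncation we have $v \in W^{s,p}(D') \cap L^{p-1}_{sp}(\R^n)$, so the obstacle problem in the class $\mathcal K_{v,v}(D, D')$ (obstacle $v$, boundary datum $v$) is well-posed, and I will let $w$ be its unique solution furnished by Theorem~\ref{obst prob sol}. That same theorem asserts $w$ is a weak supersolution in $D$, so it will suffice to prove $w = v$ a.e.\ in $D$. The obstacle constraint gives $w \geq v$ automatically; the work will lie in showing the reverse $w \leq v = \min\{u, k\}$, which I will achieve by separately establishing $w \leq k$ and $w \leq u$ in $D$.

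Both bounds will follow from a single template. Given $\phi_0 \in \{k, u\}$, the competitor $\tilde w := w \wedge \phi_0$ belongs to $\mathcal K_{v,v}(D,D')$ (outside $D$: $\tilde w = v \wedge \phi_0 = v$ since $v \leq k$ and $v \leq u$; inside $D$: $\tilde w \geq v$), so using it in the variational inequality for $w$ yields $\langle \mathcal A(w), (w-\phi_0)_+ \rangle \leq 0$. On the other hand, $(w-\phi_0)_+$ vanishes on $\R^n \setminus D$ (where $w = v \leq \phi_0$), and is therefore a nonnegative element of $W^{s,p}_0(D)$; since $\phi_0$ is itself a weak supersolution (trivially for the constant $k$, by hypothesis for $u$), testing its inequality produces $\langle \mathcal A(\phi_0), (w-\phi_0)_+ \rangle \geq 0$. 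Subtracting gives
\[
\langle \mathcal A(w) - \mathcal A(\phi_0),\,(w-\phi_0)_+ \rangle \leq 0.
\]
A case-by-case sign analysis along $\{w \leq \phi_0\}$ versus $\{w > \phi_0\}$, powered by the pointwise monotonicity $(|a|^{p-2}a - |b|^{p-2}b)(a-b) \geq 0$ from Remark~\ref{a-b bounds} and modelled on the argument at the end of the proof of Lemma~\ref{comp principle}, will show that the integrand in the above bracket is pointwise $\geq 0$. Combined with the displayed inequality, this forces the integrand to vanish a.e., and then strict monotonicity of $t \mapsto |t|^{p-2}t$ forces $(w-\phi_0)_+ \equiv 0$, i.e.\ $w \leq \phi_0$. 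Applying this with $\phi_0 = k$ and $\phi_0 = u$ and combining with $w \geq v$ gives $w = v$, as desired.

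The hardest step will be this final passage from ``integrand vanishes'' to ``$(w-\phi_0)_+ \equiv 0$'': it rests on the careful handling of the mixed off-diagonal regions $\{w > \phi_0\} \times \{w \leq \phi_0\}$, exactly the subtle step already faced in Lemma~\ref{comp principle}. Once that template is in hand the two choices $\phi_0 = k$ and $\phi_0 = u$ are essentially formal, and the lemma follows.
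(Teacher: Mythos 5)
Your proof is correct, but it takes a genuinely different route from the paper. The paper argues directly from Definition~\ref{def_supersolution}: it tests the weak formulation of $u$ with $\eta=\theta_\eps\phi$, where $\theta_\eps=1-\min\{1,(u-k)_+/\eps\}$ is a Lipschitz marker of the truncation level, splits $\R^n\times\R^n$ into seven regions according to the positions of $u(x)$ and $u(y)$ relative to $k$ and $k+\eps$, estimates each piece, and lets $\eps\to0$ by monotone and dominated convergence to land exactly on the weak formulation for $\min\{u,k\}$ --- a self-contained computation using no existence theory. You instead realize $v=\min\{u,k\}$ as the solution $w$ of the obstacle problem in $\mathcal K_{v,v}(D,D')$ and identify $w=v$ by two comparison arguments ($w\le k$ and $w\le u$), importing the supersolution property from Theorem~\ref{obst prob sol}; this is legitimate and non-circular, since that theorem is quoted from~\cite{KKP15} independently of the present lemma, and the paper itself uses the same device elsewhere (e.g.\ Lemma~\ref{lem_approximation2}). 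Your admissibility checks do go through: $\min\{w,\phi_0\}$ lies in $\mathcal K_{v,v}(D,D')$, $(w-\phi_0)_+$ vanishes a.e.\ outside $D$ and hence lies in $W^{s,p}_0(D)$, constants trivially satisfy the supersolution inequality, and $u$ may be tested with $W^{s,p}_0(D)$ functions as noted after Definition~\ref{def_supersolution}. The sign analysis is the same as in Lemma~\ref{comp principle}, and the decisive mixed region $\{w>\phi_0\}\times\{w\le\phi_0\}$ does finish the job: there the integrand is strictly positive unless $w(x)=\phi_0(x)$, and since $w=v\le\phi_0$ a.e.\ on $\R^n\setminus D$ the set $\{w\le\phi_0\}$ has positive measure, forcing $|\{w>\phi_0\}|=0$. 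What your approach buys is brevity and a single reusable comparison template; what it costs is reliance on the obstacle-problem machinery where the paper's argument is elementary. One small simplification: Lemma~\ref{l.tail in control} is not needed to place $v$ in $L^{p-1}_{sp}(\R^n)$, since $v_+\le|k|$ and $v_-\le u_-+|k|$ already give it.
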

\begin{proof}
Clearly $\min\{u,k\} \in W_{\rm loc}^{s,p}(\Omega) \cap L_{sp}^{p-1}(\R^n)$. Thus we only need to check that it satisfies the weak formulation. To this end, take a nonnegative test function $\phi \in C^\infty_0(\Omega)$. For any $\eps>0$ we consider the marker function~$\theta_\eps$ defined by
$$
\theta_{\varepsilon}:= 1-\min\left\{1, \frac{(u-k)_+}{\varepsilon}\right\}.
$$
We choose $\eta = \theta_{\varepsilon} \phi$ as a test function in the weak formulation of $u$. Then we get
\begin{equation*}  
0 \le \int_{\R^n}\int_{\R^n} L(u(x),u(y)) \big(\theta_{\varepsilon}(x) \phi(x)-\theta_{\varepsilon}(y) \phi(y)\big)K(x,y)\dxy ,
\end{equation*}
where we denoted by~$L$ the function defined in~\eqref{def_l}.  To estimate the integrand,  we decompose $\R^n \times \R^n$ as a union of
\begin{align*}  
E_1  & := \left\{ (x,y) \in \R^n \times \R^n \, : \, u(x) \leq k\,, \;  u(y) \leq k \right\}, \\
E_{2,\eps} & := \left\{ (x,y) \in \R^n \times \R^n \, : \, u(x) \geq k+\eps \,, \; u(y) \geq k + \eps  \right\}, \\
E_{3,\eps} & := \left\{ (x,y) \in \R^n \times \R^n \, : \, u(x) \geq k +\eps   \,, \; u(y) < k+\eps  \right\}, \\
E_{4,\eps} & := \left\{ (x,y) \in \R^n \times \R^n \, : \, u(x) < k+\eps \,, \; u(y) \geq k +\eps   \right\}, \\
E_{5,\eps} & := \left\{ (x,y) \in \R^n \times \R^n \, : \, k < u(x) < k+\eps \,, \; u(y) \leq k   \right\}, \\
E_{6,\eps} & := \left\{ (x,y) \in \R^n \times \R^n \, : \, u(x) \leq k \,, \; k < u(y) < k +\eps   \right\}, \\
E_{7,\eps} & := \left\{ (x,y) \in \R^n \times \R^n \, : \, k< u(x) < k+\eps \,, \; k< u(y) < k + \eps  \right\}.
\end{align*}
Note that on $E_1$ we have $u = \min\{u,k\}$ and $\theta_\eps =1$, whereas on $E_{2,\eps}$ the test function vanishes since $\theta_{\eps}(x) = \theta_{\eps}(y) = 0$. On the other hand, on $E_{3,\eps}$ we have that $\theta_{\varepsilon}(x) = 0$ and $L(u(x),u(y))>0$. Thus, using $\theta_{\eps}(y) \geq \chi_{\{u \leq k\}}(y) $ and $\phi(x) \geq 0$, we get
  \begin{align*}   
&\iint_{E_{3,\eps}} L(u(x),u(y))  \big(\theta_{\varepsilon}(x) \phi(x)-\theta_{\varepsilon}(y) \phi(y)\big) K(x,y)\dxy 
\\* 
& \qquad \leq - \int_{\{u \leq k\}} \int_{\{u \geq k+\eps\}} L(k, u(y)) \phi(y) K(x,y)\dxy  
\\[0.5ex] 
&\quad \stackrel{\eps \to 0}{\longrightarrow} - \int_{\{u \leq k\}} \int_{\{u \geq k\}} L(k, u(y)) \phi(y)K(x,y)\dxy    
\\[0.5ex] 
& \qquad \leq \int_{\{u \leq k\}} \int_{\{u \geq k\}} L(k, u(y)) \big(\phi(x)- \phi(y)\big)K(x,y)\dxy.  
\end{align*}
The convergence follows by the monotone convergence theorem, and the last inequality follows since $\phi$ is nonnegative. Similar reasoning holds on $E_{4,\eps} $ by exchanging the roles of $x$ and $y$.
On $E_{5,\eps}$ we have $L(u(x),u(y))>0$, $\theta_\eps(y)=1$, and $\theta_\eps(x)=1-(u(x)-k)/\eps$, giving the estimate
\begin{align*}
&L(u(x),u(y))\big(\theta_\eps(x)\phi(x)-\theta_\eps(y)\phi(y)\big) \\
&\qquad =L(u(x),u(y))\big(\phi(x)-\phi(y)\big) - L(u(x),u(y))\frac{u(x)-k}{\eps}\phi(x) \\[0.5ex]
&\qquad \leq |u(x)-u(y)|^{p-1}|\phi(x)-\phi(y)|.
\end{align*}
Thus,
\begin{align*}    
& \iint_{E_{5,\eps}} L(u(x),u(y)) \big(\theta_{\varepsilon}(x) \phi(x)-\theta_{\varepsilon}(y) \phi(y)\big) K(x,y)\dxy \\
&\qquad \leq \iint_{E_{5,\eps}} |u(x)-u(y)|^{p-1}|\phi(x)-\phi(y)| K(x,y)\dxy \ \to\  0
\end{align*}
as $\eps \to 0$ by the dominated convergence theorem since $\chi_{\{k< u <k+\eps\}} \to 0$ pointwise as $\eps \to 0$. The uniform upper bound follows from the fact that $u \in W_{\rm loc}^{s,p}(\Omega) \cap L_{sp}^{p-1}(\R^n)$ and $\phi \in C_0^\infty(\Omega)$.
Similar reasoning holds on $E_{6,\eps} $ by exchanging the roles of $x$ and $y$.

Finally, on $E_{7,\eps}$ we have $\theta_\eps=1-(u-k)/\eps$, implying  \begin{align*}
&L(u(x),u(y))\big(\theta_\eps(x)\phi(x)-\theta(y)\phi(y)\big) \\*[0.5ex]
&\qquad = -\eps^{p-1}L(\theta_\eps(x),\theta_\eps(y))\big(\theta_\eps(x)\phi(x)-\theta_\eps(y)\phi(x)+\theta_\eps(y)\phi(x)-\theta_\eps(y)\phi(y)\big) \\*[0.5ex]
&\qquad = -\eps^{p-1}|\theta_\eps(x)-\theta_\eps(y)|^{p}\phi(x)+L(u(x),u(y))\theta_\eps(y)\big(\phi(x)-\phi(y)\big) \\*[0.5ex]
&\qquad \leq |u(x)-u(y)|^{p-1}|\phi(x)-\phi(y)|
\end{align*}
since $0 \leq \theta_\eps \leq 1$. Consequently,
\begin{align*}  
& \iint_{E_{7,\eps}} L(u(x),u(y)) \big(\theta_{\varepsilon}(x) \phi(x)-\theta_{\varepsilon}(y) \phi(y)\big) K(x,y)\dxy \\*
& \qquad \leq \iint_{E_{7,\eps}} |u(x)-u(y)|^{p-1}|\phi(x)-\phi(y)| K(x,y)\dxy\ \to\  0
\end{align*}
as $\eps \to 0$ by the dominated convergence theorem. Indeed, we have that  $|u(x)-u(y)| \chi_{E_{7,\eps}} \to 0$ almost everywhere as $\eps \to 0$, and the uniform upper bound follows as in the case of $E_{5,\eps}$.  
Collecting all the cases gives the desired nonnegativeness of the weak formulation for $\min\{u,k\}$:
\begin{align*} 
 0 & \leq  \liminf_{\eps \to 0 } \int_{\R^n}\int_{\R^n} L(u(x),u(y)) \big(\theta_{\varepsilon}(x) \phi(x)-\theta_{\varepsilon}(y) \phi(y)\big)K(x,y)\dxy  \\ 
  & \leq \int_{\{u \leq k\}} \int_{\{u \leq k\}} L(u(x), u(y)) \big(\phi(x)- \phi(y)\big)K(x,y)\dxy
  \\ & \quad +  \int_{\{u \geq k\}} \int_{\{u \leq k\}} L(u(x),k) \big(\phi(x)- \phi(y)\big)K(x,y)\dxy
  \\ & \quad   + \int_{\{u \leq k\}} \int_{\{u \geq k\}} L(k,u(y)) \big(\phi(x)- \phi(y)\big)K(x,y)\dxy
  \\ & = \int_{\R^n}\int_{\R^n} L\big(\min\{u(x),k\},\min\{u(y),k\}\big) \big(\phi(x)- \phi(y)\big)K(x,y)\dxy,
\end{align*}
finishing the proof. 
\end{proof}

\begin{remark}
We could also prove that the pointwise minimum of two weak supersolutions is a weak supersolution, see \cite{KKP15b}. However, we do not state the proof here since it will immediately follow from our results for $(s,p)$-superharmonic functions in Section \ref{sec_superharmonic}.
\end{remark}

Finally, we state and prove a very general fact which assures that (pointwise) limit functions of suitably 
bounded sequences of weak supersolutions are supersolutions as well.
\begin{theorem}[{\bf Convergence of sequences of supersolutions}] \label{lemma:conv supersolution} 
Let $g \in L^{p-1}_{sp}(\R^n)$ and $h \in L^{p-1}_{sp}(\R^n)$ be such that $h \leq g$ in $\R^n$.
Let  $\{u_j\}$ be a sequence of weak supersolutions in $\Omega$ such that $h \leq u_j \leq g$ almost everywhere in $\R^n$ and $u_j$ is uniformly locally essentially bounded from above in $\Omega$. Suppose that $u_j$ converges to a function $u$ pointwise almost everywhere as $j \to \infty$. Then $u$ is a weak supersolution in $\Omega$ as well.  
\end{theorem}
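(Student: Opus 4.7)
My plan is to first pin down enough uniform regularity for $\{u_j\}$ to make the pointwise limit a legitimate candidate, and then pass to the limit in the weak formulation by separating the contributions coming from near the diagonal, where the kernel is singular, from the genuinely nonlocal long-range part.

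\emph{Step 1 (uniform bounds and properties of $u$).} Using the two-sided sandwich $h \leq u_j \leq g$, the uniform local $L^\infty$-bound from above, and the fact that each $u_j$ is a weak supersolution, Lemma~\ref{seminorm bound} (applied on compact subsets after replacing $u_j$ by $\min\{u_j,M_D\}$ via Lemma~\ref{min(u,k)} to enforce the global upper-bound hypothesis) yields a uniform estimate $[u_j]_{W^{s,p}(D)} \leq C_D$ for every $D \Subset \Omega$. Combined with the pointwise almost everywhere convergence, Fatou's lemma places $u \in W^{s,p}_{\rm loc}(\Omega)$; the sandwich condition also places $u \in L^{p-1}_{sp}(\R^n)$ with $u_-\leq h_- \in L^{p-1}_{sp}(\R^n)$. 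Dominated convergence then yields $u_j \to u$ in $L^q_{\rm loc}(\Omega)$ for every finite $q$.

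\emph{Step 2 (splitting the weak formulation).} Fix a nonnegative test function $\eta \in C_0^\infty(\Omega)$ and choose an open set $D$ with $\supp\eta \Subset D \Subset \Omega$. Since $\eta$ vanishes outside $D$, the symmetry of $K$ allows us to write the weak formulation for $u_j$ as $I^{\rm loc}_j + 2 I^{\rm nl}_j \geq 0$, where
\begin{align*}
I^{\rm loc}_j &:= \int_D \int_D L(u_j(x),u_j(y))\bigl(\eta(x)-\eta(y)\bigr) K(x,y) \dxy, \\
I^{\rm nl}_j &:= \int_{\R^n \setminus D}\int_D L(u_j(x),u_j(y))\,\eta(x)\, K(x,y)\dxy.
\end{align*}
It is enough to show $I^{\rm loc}_j \to I^{\rm loc}$ and $I^{\rm nl}_j \to I^{\rm nl}$, where the limits are the analogous quantities defined with $u$ in place of $u_j$.

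\emph{Step 3 (the local part --- the main obstacle).} On $D \times D$ the integrand is singular along the diagonal and is bounded in absolute value by $c\, \|\nabla\eta\|_\infty |u_j(x)-u_j(y)|^{p-1} |x-y|^{1-n-sp}$. I would invoke Vitali's convergence theorem: the almost-everywhere pointwise convergence of the integrand follows from continuity of $L$ and $u_j \to u$ a.e., while uniform integrability is obtained from H\"older's inequality together with the Step~1 bound,
\begin{equation*}
\int_E |u_j(x)-u_j(y)|^{p-1} |x-y|^{1-n-sp}\dxy \,\leq\, C_D^{p-1} \Bigl( \int_E |x-y|^{p(1-s)-n}\dxy \Bigr)^{\!1/p},
\end{equation*}
valid for $E \subset D \times D$. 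Since $p(1-s)-n > -n$, the function $(x,y) \mapsto |x-y|^{p(1-s)-n}$ is integrable on $D \times D$, so the right-hand side vanishes uniformly in $j$ as $|E|\to 0$. Vitali's theorem then delivers $I^{\rm loc}_j \to I^{\rm loc}$.

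\emph{Step 4 (the nonlocal part).} For $x \in \supp\eta$ and $y \in \R^n\setminus D$ we have $|x-y| \geq \dist(\supp\eta,\partial D) > 0$, and for $|y|$ large, $|x-y|^{-n-sp} \leq c(1+|y|)^{-n-sp}$. Using $|u_j(x)| \leq M_D$ on $\supp\eta$ and $|u_j(y)| \leq |g(y)|+|h(y)|$ almost everywhere, one bounds the integrand by the $j$-independent dominant $c\bigl(M_D^{p-1} + |g(y)|^{p-1}+|h(y)|^{p-1}\bigr)\,\eta(x)(1+|y|)^{-n-sp}$, which is integrable because $g,h \in L^{p-1}_{sp}(\R^n)$. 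Dominated convergence yields $I^{\rm nl}_j \to I^{\rm nl}$, and combining Steps 3 and 4 concludes the proof. The technical heart of the argument is the uniform integrability near the diagonal in Step~3, which is where the uniform $W^{s,p}_{\rm loc}$-estimate from Lemma~\ref{seminorm bound} is crucial.
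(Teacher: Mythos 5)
Your proof is correct, but it handles the delicate diagonal part by a genuinely different route than the paper. The paper estimates the \emph{difference} $L(u_j(x),u_j(y))-L(u(x),u(y))$ over $D_1\times D_1$ and splits it over the good set $A_{j,\theta}\times A_{j,\theta}$, with $A_{j,\theta}=\{|u_j-u|<\theta\}$, and its complement: on the complement it uses H\"older together with the absolute continuity of the integral of $|\phi(x)-\phi(y)|^p|x-y|^{-n-sp}$ (the measure of the bad set tends to zero), while on the good set it invokes the algebraic inequalities of Lemma~\ref{lemmai} plus an interpolation exponent $\sigma$, with separate computations for $p\ge 2$ and $1<p<2$, to get a bound of order $\theta^{\sigma}$, and finally lets $\theta\to 0$. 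You instead apply Vitali's theorem directly to the full integrands $L(u_j(x),u_j(y))(\eta(x)-\eta(y))K(x,y)$ on $D\times D$: the pointwise bound $\Lambda\|\nabla\eta\|_\infty|u_j(x)-u_j(y)|^{p-1}|x-y|^{1-n-sp}$, H\"older's inequality, and the $j$-uniform seminorm bound from Lemma~\ref{seminorm bound} reduce uniform integrability to the absolute continuity of the fixed weight $|x-y|^{p(1-s)-n}\in L^1(D\times D)$, which is integrable since $p(1-s)>0$. This is valid and in fact leaner: it bypasses Lemma~\ref{lemmai}, the case distinction in $p$, and the $\theta$-bookkeeping, at the price of losing the quantitative modulus $\theta^\sigma$ that the paper's splitting yields (irrelevant for the qualitative statement); both arguments rest on the same key input, namely Lemma~\ref{seminorm bound}, and your treatment of the nonlocal part is the same dominated-convergence argument as the paper's term $E_{2,j}$. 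Two small points to make explicit: when you truncate via Lemma~\ref{min(u,k)} before applying Lemma~\ref{seminorm bound}, observe that $\min\{u_j,M_D\}=u_j$ a.e.\ in a neighbourhood of $D$, so the seminorm bound indeed transfers to $u_j$ (alternatively, apply Lemma~\ref{seminorm bound} with $\Omega$ replaced by a slightly larger subdomain on which the uniform upper bound holds); and the pointwise bound $|u_j|\le M_D$ on $\supp\eta$ in your Step 4 relies on the $j$-uniform local lower bound provided by Lemma~\ref{hM bound}, or can be avoided altogether by keeping $M^{p-1}+h_-^{p-1}(x)$ in the dominating function, since $h_-\in L^{p-1}(D)$.
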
 
\begin{proof} Fix a nonnegative $\phi \in C_0^\infty(\Omega)$ and let $D_1 $ be an open set such that $\supp \phi \subset D_1\Subset \Omega$.
Furthermore, let $D_2$ be an open set such that $D_1 \Subset D_2 \Subset \Omega$ and take large enough $M>0$ satisfying $u_j \leq M$ almost everywhere in $D_2$.
First, from Lemma~\ref{seminorm bound} for $u_j$ we deduce that 
\[
\int_{D_1} \int_{D_1} \frac{|u_j(x)-u_j(y)|^p}{ |x-y|^{n+sp}} \dxy \leq C < \infty
\]
uniformly in $j$. Therefore, Fatou's Lemma yields that $u \in W^{s,p}(D_1)$.
Moreover, the pointwise convergence implies that $h \leq u \leq g$ a.~\!e. in~$\R^n$. Accordingly, we may rewrite as
\begin{align*}
0 &\leq \int_{\R^n} \int_{\R^n} L(u_j(x),u_j(y)) \big( \phi(x) -  \phi(y) \big) K(x,y) \dxy \\[1ex]
&= \int_{\R^n} \int_{\R^n} L(u(x),u(y)) \big( \phi(x) -  \phi(y) \big) K(x,y) \dxy \\
&\quad + \int_{\R^n} \int_{\R^n} \big( L(u_j(x),u_j(y)) - L(u(x),u(y)) \big) \big( \phi(x) - \phi(y) \big) K(x,y) \dxy.
\end{align*}
We further split the second term on the right-hand side in the display above into the following two terms, by 
using the fact that $\supp \phi \subset D_1$ will assure the needed separation to write the contribution on~$D_1 \times D_1$,
\begin{align*} 
&\int_{\R^n} \int_{\R^n} \big( L(u_j(x),u_j(y)) - L(u(x),u(y)) \big) \big( \phi(x) -  \phi(y) \big) K(x,y) \dxy \\[1ex]
&\qquad = \int_{D_1} \int_{D_1} \big( L(u_j(x),u_j(y)) - L(u(x),u(y)) \big) \big( \phi(x) -  \phi(y) \big) K(x,y) \dxy \\
&\qquad\quad + 2\int_{\R^n \setminus D_1} \int_{D_1} \big( L(u_j(x),u_j(y)) - L(u(x),u(y)) \big)  \phi(x)  {K}(x,y) \dxy \\[1ex]
&\qquad =: E_{1,j} + 2E_{2,j}.
\end{align*}
Our goal is now to show that 
\[
\lim_{j\to \infty} \left( E_{1,j} + 2E_{2,j} \right) = 0,
\]
which then proves that $u$ is a weak supersolution in $\Omega$, 
as desired.

Considering first $E_{2,j}$, we have the pointwise upper bound
\begin{align*}
&\big|L(u_j(x),u_j(y)) - L(u(x),u(y))\big| \\ 
&\qquad\leq c\,\big( g_+^{p-1}(x) + g_+^{p-1}(y) + h_-^{p-1}(x) + h_-^{p-1}(y) \big),
\end{align*}
and therefore, by the dominated convergence theorem, 
\begin{eqnarray*}
&&\lim_{j\to \infty}E_{2,j} \\*
&& \quad = \, \lim_{j\to \infty} \int_{\R^n \setminus D_1} \int_{D_1} \big( L(u_j(x),u_j(y)) - L(u(x),u(y)) \big) \phi(x) {K}(x,y) \dxy \\*
&&  \quad = \, 0.
\end{eqnarray*}

Therefore, it remains to show that $\lim_{j\to \infty}  E_{1,j} = 0$. 
To this end, denote in short
\[
\Psi_{j}(x,y) := \big( L(u_j(x),u_j(y)) - L(u(x),u(y)) \big) \big( \phi(x) -  \phi(y) \big) K(x,y),
\]
and rewrite
\begin{eqnarray*} 
&&\int_{D_1} \int_{D_1} \Psi_{j}(x,y) \dxy \\
&&\qquad\qquad = \int_{A_{j,\theta} } \int_{A_{j,\theta} } \Psi_{j}(x,y) \dxy + \iint_{(D_1 \times D_1) \setminus (A_{j,\theta} \times A_{j,\theta})} \Psi_{j}(x,y) \dxy,
\end{eqnarray*}
where we have set
\[
A_{j,\theta}  :=  \Big\{x \in D_1 \, : \,  |u_j(x)-u(x)| < \theta \Big\}.
\]
On the one hand, by H\"older's Inequality we get that
\begin{align*} 
 \iint_E \Psi_{j}(x,y) \dxy &\leq c\, \bigg( \iint_E \frac{|u_j(x)-u_j(y)|^p}{|x-y|^{n+sp}} + \frac{|u(x)-u(y)|^p}{|x-y|^{n+sp}} \dxy \bigg)^{\frac{p-1}{p}} \\
& \qquad \times \bigg( \iint_E \frac{|\phi(x)-\phi(y)|^p}{|x-y|^{n+sp}} \dxy \bigg)^{\frac1p} 
\end{align*}
whenever $E$ is a Borel set of $D_1 \times D_1$. The first integral in the right-hand side of the inequality above is uniformly bounded in $j$, since the sequence $u_j$ is equibounded in~$W^{s,p}(D_1)$ as seen in the beginning of the proof. Also, since the function~$\Phi\colon\R^{n} \times \R^n \to \R$, defined by
\[
\Phi(x,y) := \frac{|\phi(x)-\phi(y)|^p}{|x-y|^{n+sp}},
\]
belongs to $L^{1}(\R^{n} \times \R^n)$, we deduce that 
\[
\lim_{j \to  \infty} \iint_{(D_1 \times D_1) \setminus (A_{j,\theta} \times A_{j,\theta})}  \Phi(x,y) \dxy = 0,
\]
because $\big|(D_1 \times D_1) \setminus (A_{j,\theta} \times A_{j,\theta})\big| \to 0$ as $j \to \infty$ for any $\theta>0$ by the pointwise convergence of $u_j$ to $u$. 

On the other hand,
\begin{align*} 
|\Psi_j(x,y)| &\leq c\,\frac{|\phi(x)-\phi(y)|}{|x-y|^{n+sp}} \big| u_j(x)-u(x) - u_j(y) + u(y) \big| \\
&\qquad \times \int_0^1 \big| t\big( u_j(x) - u_j(y)\big) + (1-t)\big( u(x) - u(y)\big) \big|^{p-2} \dt,
\end{align*}
where we can estimate
\begin{align*}
& \big| u_j(x)-u(x) - u_j(y) + u(y) \big| \\
&\qquad \leq \big| |u_j(x){-}u(x)| + |u_j(y){-}u(y)| \big|^\sigma \big| |u_j(x){-}u_j(y)| + |u(x){-}u(y)| \big|^{1-\sigma} 
\end{align*}
for any $\sigma \in (0,1)$.

Now, we have to distinguish two cases depending on the summability exponent~$p$. 
In the case when~$p\geq 2$, we obtain in $A_{j,\theta} \times A_{j,\theta}$ that 
\[
|\Psi_j(x,y)|
\leq c\,\theta^{\sigma} \frac{\big(|u_j(x)-u_j(y)|+|u(x)-u(y)|\big)^{p-1-\sigma}}{|x-y|^{s(p-1-\sigma)}}
\frac{|\phi(x)-\phi(y)|}{|x-y|^{n+s(1+\sigma)}},
\]
and thus by H\"older's Inequality we obtain  
\[
\int_{A_{j,\theta} } \int_{A_{j,\theta} } \Psi_{j}(x,y) \dxy \leq c\,\theta^\sigma C  \left( \int_{D_1} \int_{D_1} \frac{|\phi(x)-\phi(y)|^{q}}{|x-y|^{n+s(1+\sigma) q}} \dxy \right)^{\frac1{q}},
\]
where $q := [p/(p-1-\sigma)]'=p/(1+\sigma)$ and $C$ is independent of $j$ and $\theta$. Taking
\[
\sigma=\min\left\{\frac{1-s}{2s},\frac{p-1}{2},\frac{1}{2}\right\},
\]
we finally get that 
\begin{align} \label{eq:conv prel xxx} 
\int_{A_{j,\theta} } \int_{A_{j,\theta} } \Psi_{j}(x,y) \dxy \leq \widetilde C \theta^{\sigma},
\end{align}
where $\widetilde C$ is independent of $j$ and $\theta$.

On the other hand, in the case when $1<p<2$, we obtain by~\eqref{lemma:1<p<2}
\begin{align*}
|\Psi_j(x,y)| &\leq c\, \frac{\left| u_j(x)-u(x) - u_j(y) + u(y) \right|^{p-1}}{|x-y|^{s(p-1)}} 
\frac{|\phi(x)-\phi(y)|}{|x-y|^{n+s}} \\
&\leq c\,\theta^\sigma  \frac{\left| u_j(x)-u(x) - u_j(y) + u(y) \right|^{p-1-\sigma}}{|x-y|^{s(p-1-\sigma)}} 
\frac{|\phi(x)-\phi(y)|}{|x-y|^{n+s(1+\sigma)}}
\end{align*}
in $A_{j,\theta} \times A_{j,\theta}$, and now it suffices to act as in the case~$p\geq 2$ above in order to prove the estimate in~\eqref{eq:conv prel xxx} also in such a sublinear case.

Finally, it suffices to collect all the estimates above in order to conclude that actually
\[
\lim_{j \to \infty} E_{1,j} = 0
\]
holds since $\theta$ can be chosen arbitrarily small. This finishes the proof.
\end{proof}

If the sequence is increasing, we do not have to assume any boundedness from above.

\begin{corollary} \label{lemma:conv supersolution2}
Let  $\{u_j\}$ be an increasing sequence of weak supersolutions in $\Omega$ such that $u_j$ converges to a function $u \in W^{s,p}_{\rm loc}(\Omega) \cap L^{p-1}_{sp}(\R^n)$ pointwise almost everywhere in $\R^n$ as $j \to \infty$.
Then $u$ is a weak supersolution in $\Omega$ as well.  
\end{corollary}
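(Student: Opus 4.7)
The plan is to reduce to Theorem~\ref{lemma:conv supersolution} via a two–step truncation, since the hypotheses of that theorem require both the sequence and a uniform bound from above, neither of which is directly available here. The key assumption that replaces the uniform upper bound is $u\in W^{s,p}_{\rm loc}(\Omega)\cap L^{p-1}_{sp}(\R^n)$, which provides the dominating function in the final passage to the limit.

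\textbf{Step 1: Truncation from above.} For each integer $k$, set $w_{j,k}:=\min\{u_j,k\}$. By Lemma~\ref{min(u,k)} every $w_{j,k}$ is a weak supersolution in $\Omega$, and for fixed $k$ the sequence $\{w_{j,k}\}_j$ is increasing and converges pointwise a.e.\ to $v_k:=\min\{u,k\}$. Taking $g\equiv k$ (constants lie in $L^{p-1}_{sp}(\R^n)$) and $h:=\min\{u_1,k\}$, which belongs to $L^{p-1}_{sp}(\R^n)$ because $h_{-}\leq (u_1)_{-}\in L^{p-1}_{sp}(\R^n)$ and $h_{+}\leq k$, one has $h\leq w_{j,k}\leq g$ in $\R^n$ and a uniform local upper bound $k$. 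Theorem~\ref{lemma:conv supersolution} thus yields that $v_k=\min\{u,k\}$ is a weak supersolution in $\Omega$ for every $k$.

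\textbf{Step 2: Passing $k\to\infty$.} Now $\{v_k\}_k$ is an increasing sequence of weak supersolutions with $v_k\to u$ pointwise a.e. The idea is to pass to the limit \emph{directly} in the weak formulation, using the hypothesis $u\in W^{s,p}_{\rm loc}(\Omega)\cap L^{p-1}_{sp}(\R^n)$ to produce an integrable majorant. Fix a nonnegative $\phi\in C_0^\infty(\Omega)$ and a domain $D$ with $\supp\phi\subset D\Subset\Omega$. Because truncation is $1$-Lipschitz, one has the pointwise control
\[
|v_k(x)-v_k(y)|\le |u(x)-u(y)|,\qquad |v_k(x)|\le |u(x)|\text{ for }k\text{ large},
\]
so $|L(v_k(x),v_k(y))|\le |u(x)-u(y)|^{p-1}$ everywhere, and $L(v_k(x),v_k(y))\to L(u(x),u(y))$ pointwise a.e.

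\textbf{Step 3: Dominated convergence.} Split the integral $\int_{\R^n}\int_{\R^n}L(v_k(x),v_k(y))(\phi(x)-\phi(y))K(x,y)\dxy$ into the near-diagonal part over $D\times D$ and the long-range part over $D\times(\R^n\setminus D)$ (the symmetric piece is analogous). On $D\times D$, H\"older's inequality bounds the majorant by $[u]_{W^{s,p}(D)}^{p-1}[\phi]_{W^{s,p}(D)}$, finite since $u\in W^{s,p}_{\rm loc}(\Omega)$. On the long-range piece, use $|u(x)-u(y)|^{p-1}\le c(|u(x)|^{p-1}+|u(y)|^{p-1})$: the contribution of $|u(x)|^{p-1}$ is controlled by $\|u\|_{L^{p-1}(D)}^{p-1}/\dist(\supp\phi,\partial D)^{sp}$, while the contribution of $|u(y)|^{p-1}$ is bounded by a constant multiple of $\mathrm{Tail}(u;z,r)^{p-1}$ for some $z\in\supp\phi$, finite because $u\in L^{p-1}_{sp}(\R^n)$. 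Dominated convergence then yields that the weak formulation for $v_k$ passes to the limit, giving a nonnegative inequality for $u$. Together with the assumed regularity of $u$ and $u_{-}\le(u_1)_{-}\in L^{p-1}_{sp}(\R^n)$, this shows that $u$ is a weak supersolution in $\Omega$.

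\textbf{Main obstacle.} The delicate point is that Theorem~\ref{lemma:conv supersolution} cannot be applied a second time in Step~2, because we have no uniform local $L^\infty$-bound on $v_k$ as $k\uparrow\infty$. The work-around is to \emph{use the hypothesis} $u\in W^{s,p}_{\rm loc}(\Omega)\cap L^{p-1}_{sp}(\R^n)$ as a substitute, converting the problem of establishing $W^{s,p}$-equiboundedness of the truncations into a direct dominated-convergence argument via the $1$-Lipschitzianity of truncation.
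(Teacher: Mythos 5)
Your proposal is correct and follows essentially the same route as the paper: truncate both the sequence and the limit at level $k$, use Lemma~\ref{min(u,k)} and Theorem~\ref{lemma:conv supersolution} (with the pinching $\min\{u_1,k\}\le \min\{u_j,k\}\le k$) to conclude that $\min\{u,k\}$ is a weak supersolution, and then let $k\to\infty$ in the weak formulation by dominated convergence, using the $1$-Lipschitz bound $|L(\min\{u,k\}(x),\min\{u,k\}(y))|\le|u(x)-u(y)|^{p-1}$ together with $u\in W^{s,p}_{\rm loc}(\Omega)\cap L^{p-1}_{sp}(\R^n)$. Your only addition is spelling out the integrability of the majorant via the near-diagonal/long-range splitting of Remark~\ref{rem_superdef}, which the paper leaves implicit.
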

\begin{proof} 
For any $M>0$, denote by $u_{M}:=\min\{u,M\}$ and $u_{M,j}:=\min\{u_j,M\}$, which is a weak supersolution by Lemma~\ref{min(u,k)}. Then $\{u_{M,j}\}_j$ is a sequence satisfying the assumptions of Theorem~\ref{lemma:conv supersolution} converging pointwise almost everywhere to $u_M$, and consequently $u_M$ is a weak supersolution in $\Omega$.
Let $\eta \in C^\infty_0(\Omega)$ be a nonnegative test function.
Since
\[
\left|L(u_M(x),u_M(y))\right| \leq |u(x)-u(y)|^{p-1}
\]
for every $M>0$ and every $x,y \in \R^n$, where $u \in W^{s,p}_{\rm loc}(\Omega) \cap L^{p-1}_{sp}(\R^n)$, we can let $M \to \infty$ to obtain by the dominated convergence theorem that
\[
\int_{\R^n}\int_{\R^n}L(u(x),u(y))\big(\eta(x)-\eta(y)\big)K(x,y)\dxy \geq 0.
\]
We conclude that $u$ is a weak supersolution in $\Omega$.
\end{proof}

A similar result as Theorem \ref{lemma:conv supersolution} holds also for sequences of weak solutions.

\begin{corollary} \label{lemma:conv solution}
Let  $h,g \in L_{sp}^{p-1}(\R^n)$ and let $\{u_j\}$ be a sequence of weak solutions in $\Omega$ such that $h \leq u_j \leq g$ and $u_j \to  u$  pointwise almost everywhere in $\R^n$ as $j \to \infty$. Then $u$ is a weak solution in $\Omega$. 
\end{corollary}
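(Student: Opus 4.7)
The natural strategy is to obtain the supersolution and subsolution properties of $u$ separately by invoking Theorem~\ref{lemma:conv supersolution} twice: once for the sequence $\{u_j\}$, and once for the sequence $\{-u_j\}$. Since each $u_j$ is a weak solution, both $u_j$ and $-u_j$ are simultaneously weak supersolutions and weak subsolutions, so after verifying the hypotheses of Theorem~\ref{lemma:conv supersolution} in each case we will conclude that $u$ and $-u$ are both weak supersolutions, i.e.\ that $u$ is a weak solution.

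The only nontrivial hypothesis to verify is the uniform local essential boundedness from above required by Theorem~\ref{lemma:conv supersolution}. To this end I would argue as follows. Fix $B_{2r}(x_0) \Subset \Omega$. Since $u_j$ is in particular a weak subsolution and $u_j \leq g$ almost everywhere in $\R^n$, we have $(u_j)_+ \leq g_+$ pointwise. Applying Theorem~\ref{thm_local} to $u_j$ with interpolation parameter $\delta = 1$, we get
\begin{equation*}
\esssup_{B_{r/2}(x_0)} u_j \,\leq\, {\rm Tail}((u_j)_+;x_0,r/2) + c\left(\mean{B_r(x_0)} (u_j)_+^p\dx\right)^{\frac1p} \,\leq\, C,
\end{equation*}
where $C$ depends only on $n,p,s,\Lambda,r,g$ via $\|g_+\|_{L^p(B_r(x_0))}$ and ${\rm Tail}(g_+;x_0,r/2)$, both finite because $g \in L^{p-1}_{sp}(\R^n) \subset L^{p-1}_{\rm loc}(\R^n)$ and $g_+ \leq |g|$. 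A finite covering argument then yields the uniform local essential bound from above for $\{u_j\}$ on every compact subset of $\Omega$. Hence Theorem~\ref{lemma:conv supersolution}, applied with the given $h$ and $g$, produces a weak supersolution limit; by the assumed pointwise convergence this limit is $u$.

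For the reverse inequality, the sequence $\{-u_j\}$ consists of weak solutions (so in particular weak supersolutions) satisfying $-g \leq -u_j \leq -h$ almost everywhere in $\R^n$, with $-g, -h \in L^{p-1}_{sp}(\R^n)$, and $-u_j \to -u$ pointwise almost everywhere. The uniform local essential boundedness from above of $\{-u_j\}$ follows exactly as in the previous paragraph by applying Theorem~\ref{thm_local} to the weak subsolution $-u_j$ together with the bound $-u_j \leq -h$. Theorem~\ref{lemma:conv supersolution} then gives that $-u$ is a weak supersolution in $\Omega$, i.e., $u$ is a weak subsolution in $\Omega$. Combining with the previous step concludes that $u$ is a weak solution in $\Omega$.

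The main technical point is the verification of the uniform upper bound on $\{u_j\}$ (and symmetrically on $\{-u_j\}$), since mere membership of $g$ in $L^{p-1}_{sp}(\R^n)$ does not give pointwise local boundedness of the majorant. The device that bridges this gap is precisely the tail-sensitive sup estimate of Theorem~\ref{thm_local}: it converts the uniform global control $(u_j)_+ \leq g_+$ into a uniform local essential sup bound, which is the hypothesis demanded by Theorem~\ref{lemma:conv supersolution}. Once this interpolation between local and nonlocal information is in place, the remainder of the proof is a formal application of the two-sided convergence result.
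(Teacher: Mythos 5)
Your overall strategy is exactly the paper's: apply Theorem~\ref{lemma:conv supersolution} to $\{u_j\}$ and to $\{-u_j\}$, the only substantive hypothesis to check being the uniform local essential bound from above. The gap is in how you verify that bound. After invoking Theorem~\ref{thm_local} with $\delta=1$ you estimate the average term by $\big(\mean{B_r}(u_j)_+^p\,{\rm d}x\big)^{1/p}\leq \big(\mean{B_r}g_+^p\,{\rm d}x\big)^{1/p}$ and declare this finite ``because $g\in L^{p-1}_{sp}(\R^n)$''. But membership in the tail space only gives $g\in L^{p-1}_{\rm loc}(\R^n)$; it does not give $g_+\in L^{p}_{\rm loc}(\R^n)$, so $\|g_+\|_{L^p(B_r(x_0))}$ may well be $+\infty$ and your constant $C$ is not finite in general. (The Tail term is fine, since ${\rm Tail}$ only involves the $(p-1)$-power; the problem is solely the local $L^p$ average, whose exponent exceeds the integrability you actually control.)

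This mismatch of exponents is precisely what the paper's Lemma~\ref{hM bound} is designed to handle, and it is the lemma the paper cites at this point: there one estimates $\mean{}(u_j)_+^p\leq \esssup (u_j)_+\cdot\mean{} g_+^{p-1}$, absorbs the essential supremum by Young's inequality, and runs a standard iteration over radii $\sigma' r<\sigma r$ to conclude a bound depending only on $\mean{B_{2r}}g_+^{p-1}$ and ${\rm Tail}(g_+;x_0,r)$, which \emph{are} finite for $g\in L^{p-1}_{sp}(\R^n)$. So your argument is repairable: either reproduce that absorption-plus-iteration step in place of the direct domination, or simply invoke Lemma~\ref{hM bound} applied to the weak supersolutions $-u_j\geq -g$ (its proof uses only the lower bound by a tail-space function, so the normalization $v\leq 0$ in its statement is harmless) to get the uniform local upper bound on $u_j$, and symmetrically to $u_j\geq h$ for the lower bound. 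With that correction the rest of your proof coincides with the paper's.
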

\begin{proof}
Since both $u_j$ and $-u_j$ are weak supersolutions in $\Omega$, we have that $u_j$ is uniformly locally essentially bounded in $\Omega$ by Lemma~\ref{hM bound}.
Then $u$ is a weak solution in $\Omega$ since both $u$ and $-u$ are weak supersolutions by Theorem~\ref{lemma:conv supersolution}.
\end{proof}

We conclude the section with a crucial convergence result concerning continuous weak solutions. 

\begin{corollary} \label{harnack conv 0}
Let  $h,g \in L_{sp}^{p-1}(\R^n)$ and let $\{u_j\}$ be a sequence of continuous weak solutions in $\Omega$ such that $h \leq u_j \leq g$ and that $\lim_{j \to \infty} u_j$ exists almost everywhere in $\R^n$.
Then $u := \lim_{j \to \infty} u_j$ exists at every point of $\Omega$ and $u$ is a continuous weak solution in $\Omega$.  
\end{corollary}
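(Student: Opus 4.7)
The strategy is to combine Corollary~\ref{lemma:conv solution}, which already identifies the a.e.\ limit as a weak solution, with the quantitative H\"older estimate of Theorem~\ref{thm_holdere} applied uniformly to the sequence $\{u_j\}$. First, since $h\leq u_j\leq g$ with $h,g\in L^{p-1}_{sp}(\R^n)$, Corollary~\ref{lemma:conv solution} gives that the almost everywhere limit, call it $u^{\star}$, is a weak solution in~$\Omega$. It remains to upgrade a.e.\ convergence to pointwise everywhere convergence in~$\Omega$ and to show that the limit is continuous; once continuity is in hand, the weak-solution property is preserved.

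The main step is to obtain a uniform modulus of continuity for $\{u_j\}$ on every compact subset $K\Subset\Omega$. Fix $K\Subset\Omega$ and choose $r_0>0$ so that $B_{2r_0}(x_0)\subset\Omega$ for every $x_0\in K$. Because $h\leq u_j\leq g$ pointwise on~$\R^n$, we have the uniform bounds
\[
\|u_j\|_{L^p(B_{2r_0}(x_0))}\leq \|h\|_{L^p(B_{2r_0}(x_0))}+\|g\|_{L^p(B_{2r_0}(x_0))},
\]
\[
{\rm Tail}(u_j;x_0,r_0)\leq {\rm Tail}(h_-;x_0,r_0)+{\rm Tail}(g_+;x_0,r_0),
\]
both of which are finite and bounded by a constant $M_0$ independent of $j$ and of the point $x_0\in K$ (a simple covering of $K$ suffices to make the bound uniform in $x_0$). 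Since each $u_j$ is a continuous weak solution in~$\Omega$, Theorem~\ref{thm_holdere} applies and yields
\[
\osc_{B_\varrho(x_0)} u_j \leq c\left(\frac{\varrho}{r_0}\right)^{\alpha} M_0
\]
for all $\varrho\in(0,r_0]$, with constants $\alpha\in(0,sp/(p-1))$ and $c$ depending only on $n,p,s,\Lambda$. Consequently $\{u_j\}$ is equi-H\"older continuous on $K$ with a common exponent and constant, and it is uniformly bounded on $K$ by Lemma~\ref{hM bound} applied to both $u_j$ and $-u_j$ (or directly by $h\leq u_j\leq g$).

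By the Arzel\`a--Ascoli theorem, every subsequence of $\{u_j\}$ has a further subsequence converging uniformly on~$K$ to some continuous function. Since $u_j\to u^{\star}$ almost everywhere and uniform limits on $K$ must agree with the a.e.\ limit on $K$, every such subsequential limit coincides with a fixed continuous representative~$u$ of~$u^{\star}$ on~$K$; hence the whole sequence $\{u_j\}$ converges uniformly to $u$ on~$K$. In particular the pointwise limit $\lim_{j\to\infty}u_j(x)$ exists for every $x\in K$, and since $K\Subset\Omega$ was arbitrary, $u$ is defined and continuous on all of~$\Omega$ and equals $u^{\star}$ there. As $u^{\star}$ is already a weak solution in~$\Omega$ by Corollary~\ref{lemma:conv solution}, this finishes the proof.

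The only delicate point is verifying that the H\"older estimate of Theorem~\ref{thm_holdere} applies with constants that are truly uniform in $j$ and in the center $x_0\in K$; this is ensured by the global bounds $h\leq u_j\leq g$ together with $h,g\in L^{p-1}_{sp}(\R^n)$, which simultaneously control the $L^p$-average and the nonlocal tail in the estimate. Everything else is a standard equicontinuity/Arzel\`a--Ascoli argument.
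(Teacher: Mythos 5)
Your proposal follows essentially the same route as the paper: Corollary~\ref{lemma:conv solution} identifies the a.e.\ limit as a weak solution, uniform local bounds plus uniform tail bounds feed into the H\"older estimate of Theorem~\ref{thm_holdere} to give equicontinuity on compacts, and Arzel\`a--Ascoli upgrades a.e.\ convergence to everywhere pointwise convergence with a continuous limit. One small slip to fix: from $h,g\in L^{p-1}_{sp}(\R^n)$ you only get $h,g\in L^{p-1}_{\rm loc}(\R^n)$, so the displayed bound $\|u_j\|_{L^p(B_{2r_0}(x_0))}\leq \|h\|_{L^p(B_{2r_0}(x_0))}+\|g\|_{L^p(B_{2r_0}(x_0))}$ is not justified, since the right-hand side need not be finite. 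The correct way to control the $L^p$-average in Theorem~\ref{thm_holdere} is the one you mention only parenthetically and the paper uses as the main step: apply Lemma~\ref{hM bound} (to $u_j$ and $-u_j$) to get $\sup_j\sup_{B_{2r}(x_0)}|u_j|\leq C$ with $C$ depending only on the data and on $h,g$ through their tail-space quantities, which then bounds both the average term and, together with the tail bound, the whole oscillation estimate uniformly in $j$. With that substitution your argument coincides with the paper's proof.
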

\begin{proof}
According to Corollary~\ref{lemma:conv solution}, $u$ is a weak solution in $\Omega$. Therefore only continuity of $u$ in $\Omega$ and pointwise convergence need to be checked. Letting $B_{3r}(x_0)$ be a ball in $\Omega$, we have by Lemma~\ref{hM bound} and the uniform Tail space bounds that
\[
\sup_{j} \left(\sup_{B_{2r}(x_0)} |u_j| +{\rm Tail}(u_j;x_0,r)\right) \leq C,
\]
where $C$ is independent of $u_j$ and $u$. Using now the H\"older continuity estimate in Theorem~\ref{thm_holdere}, we see that 
\begin{align*}
\osc_{B_\rho(x_0)} u_j &\leq c\left(\frac\rho r\right)^\alpha\left(\sup_{B_{2r}(x_0) } |u_j|  + {\rm Tail}(u_j;x_0,r) \right) \leq    c\left(\frac\rho r\right)^\alpha C,
\end{align*}
where $\rho \in (0,r)$ and $\alpha \equiv \alpha(n,p,s,\Lambda) \in (0,1)$. Therefore the sequence $\{u_j\}$ is equicontinuous on compact subsets of $\Omega$, and thus the continuity of $u$ and pointwise convergence in $\Omega$ follow from the Arzel\`a--Ascoli theorem. This finishes the proof.
\end{proof}
\vs


\vs\section{$(s,p)$-superharmonic functions}\label{sec_superharmonic}

In this section, we study the nonlocal superharmonic functions for the nonlinear integro-differential equations in~\eqref{lequazione},
which we have defined in the introduction; recall Definition \ref{def_superharmonic}.
As well-known, the superharmonic functions constitute an important class of functions which have been extensively used in PDE and in classical Potential Theory, as well as in Complex Analysis. Their fractional counterpart has to take into account the nonlocality of the operators in~\eqref{problema} and thus it has to
incorporate the summability assumptions of the negative part of the functions in the tail space~$L_{sp}^{p-1}$ defined in~\eqref{def_tailspace}.

\subsection{Bounded $(s,p)$-superharmonic functions} We first move towards proving Theorem~\ref{thm:superharmonic}(iv). We begin with an elementary approximation result for lower semicontinuous functions. The proof is standard and goes via infimal convolution. However, due to the nonlocal framework we need a suitable pointwise control of approximations over $\R^n$, and hence we present the details. 
\begin{lemma} 
\label{lem_approximation}
Let $u$ be an $(s,p)$-superharmonic function in $\Omega$ and let $D \Subset \Omega$. Then there is an increasing sequence of smooth functions $\{\psi_j\}$ such that 
\begin{equation*} 
\lim_{j \to \infty} \psi_j(x) = u(x) \qquad \text{for all } x \in D.
\end{equation*}
\end{lemma}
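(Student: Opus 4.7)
The plan is to approximate $u$ from below by first truncating from above, then applying infimal convolution, and finally mollifying with a carefully calibrated radius. First I would fix open sets $D \Subset D' \Subset \Om$ and, using Remark~\ref{remark:superharmonic loc bdd}, choose $M > 0$ such that $u \geq -M$ on $\overline{D'}$. The truncations $u^k := \min\{u, k\}$ are lower semicontinuous on $\Om$ (the minimum of a lower semicontinuous function and a constant), they satisfy $-M \leq u^k \leq k$ on $\overline{D'}$, and $u^k(x) \nearrow u(x)$ for every $x \in \Om$, including at points where $u(x) = +\infty$.

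For each $k$, I would form the infimal convolution
$$
w^k_\eps(x) := \inf_{y \in \overline{D'}} \Bigl\{ u^k(y) + \frac{|x-y|^2}{\eps} \Bigr\}, \qquad x \in \R^n,
$$
which is locally Lipschitz, lies below $u^k$ on $\overline{D'}$, and is monotone nondecreasing both in $k$ and as $\eps \downarrow 0$. For $x \in D$, any approximate minimizer $y_\eps$ must satisfy $|y_\eps - x|^2 \leq \eps\,(u^k(x) + M + \eps)$, so $y_\eps \to x$; lower semicontinuity of $u^k$ at $x \in \Om$ then forces $\liminf_{\eps \to 0} w^k_\eps(x) \geq u^k(x)$, hence $w^k_\eps(x) \to u^k(x)$ on $D$. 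Picking any $\eps_k \downarrow 0$, the diagonal sequence $W_k := w^k_{\eps_k}$ is still nondecreasing (both operations raise the infimum), each $W_k$ is Lipschitz on a fixed neighborhood of $\overline{D}$ with some constant $L_k$, and $W_k(x) \nearrow u(x)$ pointwise on $D$ by the standard two-step argument (fix $k_0$, let $k \to \infty$, then let $k_0 \to \infty$).

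The remaining, and main, obstacle is that ordinary mollification generally destroys monotonicity. I would bypass it by defining
$$
\psi_k := W_k * \rho_{\delta_k} - L_k \delta_k - 2^{-k},
$$
with $\rho_{\delta_k}$ a standard mollifier and $\delta_k > 0$ chosen inductively so small that $\delta_k < \dist(D, \partial D')$ and $L_k \delta_k \leq 2^{-k-1}$. Each $\psi_k$ is then smooth on $D$; the sandwich $W_k - 2 L_k \delta_k - 2^{-k} \leq \psi_k \leq W_k - 2^{-k}$ yields $\psi_k(x) \to u(x)$ on $D$; and the estimate
$$
\psi_{k+1} - \psi_k \geq (W_{k+1} - W_k) - 2 L_{k+1}\delta_{k+1} + 2^{-k-1} \geq 0,
$$
obtained from $W_{k+1} \geq W_k$ together with $L_{k+1}\delta_{k+1} \leq 2^{-k-2}$, delivers the required monotonicity of $\{\psi_k\}$.
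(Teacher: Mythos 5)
Your proof is correct and follows essentially the same route as the paper's: truncate at level $k$, take an infimal convolution of the lower semicontinuous, locally-bounded-below function to get an increasing sequence of Lipschitz approximations, and then smooth in a way that preserves monotonicity via small downward shifts. The only differences are cosmetic: the paper uses the linear penalty $j^2|x-y|$ over $\overline D$ with a $1/j$ downshift and simply squeezes smooth functions into the resulting strict gaps, whereas you use the quadratic (Moreau-type) penalty over a slightly larger compact set, a diagonal choice of parameters, and an explicit mollification with a Lipschitz-error correction term.
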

\begin{proof}
Define the increasing sequence of continuous functions $\{\widetilde \psi_j\}$ as follows
\[
\widetilde \psi_j(x) := \min_{y \in \overline D} \Big\{\min\big\{j,\, u(y)\big\} + j^2 |x-y| \Big\} - \frac1{j}.
\]
Notice that, by the very definition, $\widetilde \psi_j(x) \leq u(x)-1/j < u(x)$ in $D$. Since $u$ is locally bounded from below,
$u(y) \ge -M$ in $\overline D$ for some $M<\infty$. Also, by the lower semicontinuity, the minimum is attained at some $y_j \in \overline D$, and thus we have 
\[
j - \frac1{j} \geq \widetilde \psi_j(x) \geq -M + j^2 |x-y_j| - \frac1{j},
\]
which yields
\[
|x-y_j| \le \frac{j+M}{j^2} =: r_j < \infty,
\]
where $r_j \to 0$  as $j \to \infty$. Since $u$ is lower semicontinuous, we have that in $D$ 
\begin{align*}
u(x) &\leq 
\lim_{j \to \infty} \left(  \inf_{y\in B_{r_j}(x)} \min\big\{j,u(y)\big\} -\frac1{j} \right)\\
&\le \lim_{j \to \infty} \left( \inf_{y\in B_{r_j}(x)} \Big\{\min\big\{j,u(y)\big\} + j^2 |x-y| \Big\} - \frac1{j}\right)
= \lim_{j \to \infty} \widetilde \psi_j(x).
\end{align*}
Hence, $\lim_{j \to \infty} \widetilde \psi_j(x) = u(x)$ for all $x \in D$.
Finally, since $\{\widetilde \psi_j\}$ is an increasing sequence of continuous functions in $D$
and
\[
\widetilde \psi_{j+1} - \widetilde \psi_{j} \geq \frac1{j} - \frac1{j+1} > 0 \qquad \text{in } D,
\]
we can find smooth functions $\psi_j$ such that 
$\widetilde \psi_{j} \leq \psi_j < \widetilde \psi_{j+1}$ in $D$. 
Now $\{\psi_j\}$ is the desired sequence of functions.
\end{proof}

Using the previous approximation lemma, we can show that the $(s,p)$-superharmonic functions can  be also approximated by continuous weak supersolutions in regular sets.

\begin{lemma} \label{lem_approximation2}
Let $u$ be an $(s,p)$-superharmonic function in $\Omega$ and let $D \Subset \Omega$ be an open set such that $\R^n \setminus D$ satisfies the measure density condition~\eqref{eq:dens cond}.
Then there is an increasing sequence $\{u_j\}$, $u_j \in C(\overline{D})$, of weak supersolutions in $D$ converging to $u$ pointwise in $\R^n$.
\end{lemma}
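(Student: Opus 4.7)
The plan is to realize each $u_j$ as the unique solution of a nonlocal obstacle problem on $D$, with obstacle a smooth approximation of $u$ from below and boundary datum a suitable truncation of $u$ outside $D$. Fix an open set $D'$ with $D \Subset D' \Subset \Omega$, and apply Lemma~\ref{lem_approximation} (with $D'$ in place of $D$) to obtain an increasing sequence $\{\psi_j\}$ of smooth functions on $\R^n$ with $\psi_j \nearrow u$ pointwise in $D'$ and $\psi_j<u$ in $D'$. Define
\[
g_j := \psi_j \quad\text{on } D', \qquad g_j := \min\{u,j\} \quad\text{on } \R^n \setminus D'.
\]
A direct inspection shows that $g_j$ is continuous in $D'$, increasing in $j$, bounded above by $u$, and converges pointwise to $u$ in $\R^n$; moreover $g_j \in W^{s,p}(D') \cap L^{p-1}_{sp}(\R^n)$, thanks to smoothness of $\psi_j$ on $\overline{D'}$ together with $u_- \in L^{p-1}_{sp}(\R^n)$.

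Let $u_j \in \mathcal K_{g_j,\psi_j}(D,D')$ be the unique solution of the corresponding obstacle problem furnished by Theorem~\ref{obst prob sol}; in particular $u_j$ is a weak supersolution in $D$. Since $\R^n \setminus D$ satisfies the measure density condition~\eqref{eq:dens cond} and both $g_j$ and $\psi_j$ are continuous, Theorem~\ref{thm:cont up to bdry} yields $u_j \in C(D') \subset C(\overline D)$. The obstacle constraint gives $u_j \ge \psi_j$ a.e.\ in $D$, and pointwise there by continuity of both functions.

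For the upper bound $u_j \le u$, consider the open set $D_+ := \{x \in D : u_j(x) > \psi_j(x)\} \Subset \Omega$; by Corollary~\ref{obst prob free}, $u_j$ is a continuous weak solution in $D_+$, and $u_j$ is globally bounded above by $\max\{\sup_{\overline D}u_j,\sup_{\overline{D'}}\psi_j,j\}$. Since moreover $u_j = \psi_j \le u$ on $\partial D_+$ and $u_j \le u$ a.e.\ on $\R^n \setminus D_+$, the defining property~(iii) of Definition~\ref{def_superharmonic} applied to the $(s,p)$-superharmonic $u$ yields $u_j \le u$ in $D_+$, hence in $D$ since $u_j = \psi_j \le u$ on $D \setminus D_+$. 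The monotonicity $u_j \le u_{j+1}$ will be obtained by an obstacle-type comparison: the functions $v := \min\{u_j,u_{j+1}\}$ and $w := \max\{u_j,u_{j+1}\}$ are admissible competitors for $\mathcal K_{g_j,\psi_j}(D,D')$ and $\mathcal K_{g_{j+1},\psi_{j+1}}(D,D')$ respectively (using $\psi_j \le \psi_{j+1}$ and $g_j \le g_{j+1}$); testing the variational inequality for $u_j$ against $v$ and that for $u_{j+1}$ against $w$, subtracting, and exploiting the strict monotonicity of $L$ exactly as in the proof of Lemma~\ref{comp principle}, one forces $(u_j-u_{j+1})_+ \equiv 0$.

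Finally, the sandwich $\psi_j \le u_j \le u$ on $D$ combined with $\psi_j \to u$ (including at points where $u = +\infty$, at which $\psi_j \to +\infty$) and $u_j = g_j \to u$ on $\R^n \setminus D$ yields the claimed pointwise convergence in $\R^n$. The main obstacle is the monotonicity step: the needed obstacle-type comparison principle is not recorded in the paper and must be proved by hand via the test-function splitting and the strict monotonicity of $L$ from Lemma~\ref{comp principle}; a secondary technical point is to check that the globally defined $g_j$ truly lies in the admissible class with the required monotonicity and convergence properties.
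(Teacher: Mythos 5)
Your construction is essentially the paper's: the same boundary/obstacle data $g_j=\psi_j\chi_{D'}+\min\{u,j\}\chi_{\R^n\setminus D'}$ built from Lemma~\ref{lem_approximation}, the same appeal to Theorem~\ref{obst prob sol} for the supersolution property, to Theorem~\ref{thm:cont up to bdry} for continuity up to $\overline D$, to Corollary~\ref{obst prob free} on the non-contact set together with Definition~\ref{def_superharmonic}(iii) for the bound $u_j\le u$, and the same sandwich $\psi_j\le u_j\le u$ plus $u_j=g_j$ off $D$ for the pointwise convergence; all of this is correct. The one place where you diverge is the monotonicity step, and there your worry that an ``obstacle-type comparison principle is not recorded in the paper'' is unnecessary: the paper gets $u_{j+1}\ge u_j$ with the tools already at hand by noting that $u_j$ is a weak solution in $A_j:=D\cap\{u_j>g_j\}$ (Corollary~\ref{obst prob free}), that $u_{j+1}\ge g_{j+1}\ge g_j=u_j$ almost everywhere on $\R^n\setminus A_j$ (obstacle constraint off the non-contact set, and $u_{j+1}=g_{j+1}$, $u_j=g_j$ outside $D$), and then invoking Lemma~\ref{comp principle} for the supersolution $u_{j+1}$ against the (sub)solution $u_j$ in $A_j$. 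Your alternative—testing the two variational inequalities with $\min\{u_j,u_{j+1}\}$ and $\max\{u_j,u_{j+1}\}$ (both admissible since $\psi_j\le\psi_{j+1}$ and $g_j\le g_{j+1}$, so that $(u_j-u_{j+1})_+\in W_0^{s,p}(D)$) and running the splitting argument of Lemma~\ref{comp principle}—is a perfectly valid substitute, just longer, and it is the standard route when one does want a genuine comparison theorem for obstacle problems; here it buys nothing extra. Two small points you should make explicit if you write this up: on $\partial D_+\cap\partial D$ the identity $u_j=g_j=\psi_j\le u$ needs the measure density condition plus continuity of $u_j$ and $g_j$ in $D'$ (a.e.\ equality on $\R^n\setminus D$ alone does not give a pointwise statement on $\partial D$), and outside $D$ one fixes the representative $u_j:=g_j$ so that the convergence is pointwise everywhere, exactly as the paper implicitly does.
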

\begin{proof} 
Let $U$ be an open set satisfying $D  \Subset U \Subset \Omega$, which is possible by Urysohn's Lemma. By Lemma~\ref{lem_approximation}, there is an increasing sequence of smooth functions $\{\psi_j\}$, $\psi_j \in C^\infty(\overline U)$, converging to $u$ pointwise in $U$.
For each $j$, define
\[
g_j(x):=
\begin{cases}
\psi_j(x), & x \in U, \\[0.5ex]
\min\{j,u(x)\}, & x \in \R^{n} \setminus U.
\end{cases}
\]
Clearly $g_j \in W^{s,p}(U) \cap  L^{p-1}_{sp}(\R^n)$ by smoothness of $\psi_j$ and the fact that $u_- \in L^{p-1}_{sp}(\R^n)$. Now we can solve the obstacle problem using the functions $g_j$ as obstacles to obtain solutions $u_j \in \mathcal K_{g_j,g_j}(D,U)$, $j=1,2,\dots$, so that $u_j$ is continuous in $\overline{D}$ by Theorem~\ref{thm:cont up to bdry} and a weak supersolution in $D$ by Theorem~\ref{obst prob sol}.
To see that $\{u_j\}$ is an increasing sequence, denote by $A_j := D \cap \{u_j > g_j\}$.  
Since $u_j$ is a weak solution in $A_j$ by Corollary~\ref{obst prob free} and clearly $u_{j+1} \geq u_j$ in $\R^n \setminus A_j$, the comparison principle (Lemma~\ref{comp principle}) implies that $u_{j+1} \geq u_j$.
Similarly, $u_j \leq u$ by Definition~\ref{def_superharmonic}(iii). Since $g_j$ converges pointwise to $u$, we must also have that 
\[
\lim_{j \to \infty} u_j(x) = u(x) \qquad \text{for all } x \in \R^n.
\]
This finishes the proof.
\end{proof}

Below we will show that, as expected, an $(s,p)$-superharmonic function bounded from above is a weak supersolution to~\eqref{problema}. This proves the first statement of Theorem~\ref{thm:superharmonic}(iv).

\begin{theorem}\label{thm_supersuper}
Let $u \in L^{p-1}_{sp}(\R^n)$
be an $(s,p)$-superharmonic function in $\Omega$ that is locally bounded from above in $\Omega$. Then $u$ is a weak supersolution in $\Omega$.
\end{theorem}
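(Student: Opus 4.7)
The plan is to realize $u$ as a pointwise limit of continuous weak supersolutions produced by the obstacle problem and then transfer the weak supersolution property via the convergence result in Theorem~\ref{lemma:conv supersolution}. Fix a nonnegative test function $\phi \in C_0^\infty(\Omega)$; since the notion of weak supersolution is tested locally, it is enough to establish the defining inequality for each such $\phi$.

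First, I would choose an open set $D$ with $\supp \phi \Subset D \Subset \Omega$ whose complement satisfies the measure density condition~\eqref{eq:dens cond}, using the remark preceding Theorem~\ref{thm:cont up to bdry}. Applying Lemma~\ref{lem_approximation2} on $D$ yields an increasing sequence $\{u_j\}$ with $u_j \in C(\overline D)$ of weak supersolutions in $D$, converging to $u$ pointwise on $\R^n$, each $u_j$ coinciding outside $U \Supset D$ with $\min\{j,u\}$ and on $U$ with a smooth increasing approximation $\psi_j$ from the proof of Lemma~\ref{lem_approximation}. The comparison step built into Lemma~\ref{lem_approximation2} gives $u_j \leq u$ everywhere.

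To apply Theorem~\ref{lemma:conv supersolution}, I need a uniform local upper bound for $\{u_j\}$ in a neighbourhood of $\supp\phi$ and two envelopes $h,g \in L_{sp}^{p-1}(\R^n)$ with $h \leq u_j \leq g$ on $\R^n$. The upper envelope is immediate from the hypotheses: $u_j \leq u$ and $u \in L^{p-1}_{sp}(\R^n)$, so take $g:=u$; moreover, since $u$ is assumed locally bounded from above in $\Omega$, the bound $u \leq M$ on a compact neighbourhood of $\supp\phi$ supplies the uniform local upper bound. For the lower envelope I take $h := g_1$, the obstacle defining $u_1$ in the construction of Lemma~\ref{lem_approximation2}. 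Since $\{u_j\}$ is increasing, $u_j \geq u_1 \geq g_1$ on $\R^n$; and $g_1$ lies in $L^{p-1}_{sp}(\R^n)$ because on $U$ it agrees with the smooth function $\psi_1$ (bounded on $\overline U$) while outside $U$ it equals $\min\{1,u\}$, whose negative part is dominated by $u_-\in L^{p-1}_{sp}(\R^n)$ and whose positive part is bounded by $1$. With these bounds at hand, Theorem~\ref{lemma:conv supersolution} applies and yields that $u$ is a weak supersolution in $D$, hence the desired inequality holds for the chosen $\phi$; since $\phi$ was arbitrary, $u$ is a weak supersolution in $\Omega$.

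The step I expect to require most care is producing the sequence $\{u_j\}$ with a usable uniform lower bound on the whole of $\R^n$: the straightforward choice of replacing $u$ by a truncated version inside $\Omega$ only controls $u_j$ locally, whereas the convergence theorem demands a global envelope in the tail space to handle the nonlocal terms. The obstacle problem construction in Lemma~\ref{lem_approximation2} is tailored precisely to this purpose, and monotonicity of the approximating obstacles $g_j$ turns the first obstacle $g_1$ into the global lower bound. Once this is in place, the remainder is a verification of hypotheses and an appeal to the already-established machinery.
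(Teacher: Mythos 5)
Your proposal is correct and follows essentially the same route as the paper: approximate $u$ by the increasing sequence of continuous weak supersolutions from Lemma~\ref{lem_approximation2} on a regular set $D \Subset \Omega$ and pass to the limit with Theorem~\ref{lemma:conv supersolution}, using $u$ itself as the upper tail-space envelope. The only (harmless) variation is that you take the first obstacle $g_1$ as the lower envelope, whereas the paper uses $u_1$ directly, with $u_1 \in L^{p-1}_{sp}(\R^n)$.
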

\begin{proof} 
Let $D \Subset \Omega$ be an open set such that $\R^n \setminus D$ satisfies the measure density condition~\eqref{eq:dens cond}.
Then by Lemma~\ref{lem_approximation2} there is an increasing sequence $\{u_j\}$ of weak supersolutions in $D$ converging to $u$ pointwise in $\R^n$ such that each $u_j$ is continuous in $\overline D$.
Since each $u_j$ satisfies $u_1 \le u_j \le u$ with $u_1, u \in L^{p-1}_{sp}(\R^n)$ and $u$ is bounded from above in $D$, $u$ is a weak supersolution in $D$ by Theorem~\ref{lemma:conv supersolution}. Finally, because of the arbitrariness of the set $D \Subset \Omega$, we can deduce that the function $u$ is a weak supersolution in $\Omega$, as desired.
\end{proof}

If an $(s,p)$-superharmonic function is a fractional Sobolev function, it is a weak supersolution as well. This gives the second statement of Theorem~\ref{thm:superharmonic}(iv).

\begin{corollary} \label{cor_supersuper}
Let $u \in W^{s,p}_{\rm loc}(\Omega) \cap L^{p-1}_{sp}(\R^n)$ be an $(s,p)$-superharmonic function in $\Omega$.
Then $u$ is a weak supersolution 
in $\Omega$.
\end{corollary}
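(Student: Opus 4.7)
The plan is to reduce to the bounded-from-above situation already handled in Theorem~\ref{thm_supersuper} and then pass to the limit by means of the monotone convergence result stated in Corollary~\ref{lemma:conv supersolution2}. The only hypothesis missing from Theorem~\ref{thm_supersuper} that we must circumvent is the local boundedness from above of $u$, and truncation at an arbitrary level does exactly this without losing $(s,p)$-superharmonicity.

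Concretely, for every $k\in\mathds{N}$ I would set $u_k:=\min\{u,k\}$ and first verify that $u_k$ is $(s,p)$-superharmonic in~$\Omega$. Any constant $k$ trivially satisfies the weak formulation with vanishing integrand, hence is a weak solution; checking the four items of Definition~\ref{def_superharmonic} directly (using Corollary~\ref{comp principle2} for item (iii)) shows that constants are themselves $(s,p)$-superharmonic. Since the pointwise minimum of two $(s,p)$-superharmonic functions is $(s,p)$-superharmonic by Remark~\ref{remark:min superharmonic}, so is $u_k$. By construction $u_k\leq k$ in $\R^n$, and the trivial bound $|u_k|\leq |u|+k$ combined with the assumption $u\in L^{p-1}_{sp}(\R^n)$ gives $u_k\in L^{p-1}_{sp}(\R^n)$. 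Theorem~\ref{thm_supersuper} therefore applies to each $u_k$ and yields that $u_k$ is a weak supersolution in~$\Omega$.

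To conclude, observe that the sequence $\{u_k\}$ is increasing in $k$ and that, since $u\in L^{p-1}_{sp}(\R^n)\subset L^{p-1}_{\rm loc}(\R^n)$ is finite almost everywhere in $\R^n$, one has $u_k\to u$ pointwise almost everywhere in~$\R^n$. The limit $u$ belongs to $W^{s,p}_{\rm loc}(\Omega)\cap L^{p-1}_{sp}(\R^n)$ by hypothesis, so Corollary~\ref{lemma:conv supersolution2} is directly applicable and delivers that $u$ is a weak supersolution in~$\Omega$, which is the desired conclusion.

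No step of this plan is genuinely delicate: the whole argument is a short bootstrap of previously established results, the mildly non-obvious point being only the verification that constants fall into the class of $(s,p)$-superharmonic functions so that Remark~\ref{remark:min superharmonic} can be invoked to transfer $(s,p)$-superharmonicity to the truncations $u_k$.
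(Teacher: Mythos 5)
Your proposal is correct and follows essentially the same route as the paper: truncate at level $k$, note that $\min\{u,k\}$ is $(s,p)$-superharmonic (the paper also treats it as the pointwise minimum of two $(s,p)$-superharmonic functions), apply Theorem~\ref{thm_supersuper} to get weak supersolutions, and conclude via Corollary~\ref{lemma:conv supersolution2}. The extra verification that constants are $(s,p)$-superharmonic and that $u_k \in L^{p-1}_{sp}(\R^n)$ is a harmless elaboration of details the paper leaves implicit.
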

\begin{proof}
For any $M>0$, denote by $u_M:=\min\{u,M\}$, which is $(s,p)$-superharmonic in $\Omega$ as a pointwise minimum of two $(s,p)$-superharmonic functions. By Theorem~\ref{thm_supersuper} $u_M$ is a weak supersolution in $\Omega$.
Consequently, Corollary~\ref{lemma:conv supersolution2} yields that $u$ is a weak supersolution in $\Omega$.
\end{proof}

On the other hand, lower semicontinuous representatives of weak supersolutions are $(s,p)$-superharmonic.
 
\begin{theorem}\label{thm_supersuper2}
Let $u$ be a lower semicontinuous weak supersolution  
in~$\Omega$ satisfying
\begin{equation} \label{eq:essliminf000}
u(x)=\essliminf_{y \to x}u(y) \qquad \text{for every } x\in\Omega.
\end{equation}
Then $u$ is an $(s,p)$-superharmonic function in $\Omega$.
\end{theorem}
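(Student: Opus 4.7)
My plan is to verify the four conditions of Definition~\ref{def_superharmonic} in turn. Condition~(ii) is the hypothesis and condition~(iv) is built into the definition of a weak supersolution, so both come for free. For~(i), the inclusion $u \in W^{s,p}_{\rm loc}(\Omega) \subset L^p_{\rm loc}(\Omega)$ immediately gives $u<+\infty$ almost everywhere, leaving only the pointwise lower bound. I would fix $D \Subset \Omega$, use Lemma~\ref{min(u,k)} with $k=0$ to see that $\min\{u,0\}$ is still a weak supersolution, and then apply Lemma~\ref{hM bound} with $h := -u_- \in L^{p-1}_{sp}(\R^n)$ to obtain a constant $C$ with $u \geq -C$ almost everywhere in $D$. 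The assumed identity \eqref{eq:essliminf000} then promotes this a.\,e.\ bound to $u(x) \geq -C$ for every $x \in D$, finishing~(i).

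The heart of the proof is the unbounded comparison~(iii). Fix $D \Subset \Omega$ and $v \in C(\overline D)$ a weak solution in $D$ with $v_+ \in L^\infty(\R^n)$ and $u \geq v$ both on $\partial D$ and a.\,e.\ on $\R^n \setminus D$. The strategy is to compare an upper truncation of $u$ against a small downward perturbation of $v$ on a slightly shrunk subdomain and then pass to limits. For any $\eps>0$ and any $M > \|v_+\|_{L^\infty(\R^n)}$, the function $u_M := \min\{u,M\}$ remains a weak supersolution in $\Omega$ by Lemma~\ref{min(u,k)}, while $v - \eps$ is still a weak solution in $D$ because the operator in \eqref{problema} depends only on differences.

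To produce the separation near $\partial D$ needed to apply the comparison principle, I would use the l.\,s.\,c.\ of $u$ and the continuity of $v$: the set $A_\eps := \{x \in \Omega : u(x) > v(x) - \eps\}$ is open (as a superlevel set of an l.\,s.\,c.\ function) and contains $\partial D$, since at every $x \in \partial D$ one has $\liminf_{y \to x}(u(y)-v(y)) \geq u(x)-v(x) \geq 0$. I would then pick a smooth open set $D_\eps$ with $D \setminus A_\eps \Subset D_\eps \Subset D$, so that $u \geq v - \eps$ holds on the annular region $D \setminus D_\eps$. Combined with the choice of $M$, this yields $u_M \geq v - \eps$ almost everywhere on $\R^n \setminus D_\eps$. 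Choosing an intermediate open set $U$ with $D_\eps \Subset U \Subset D$, both $u_M$ and $v - \eps$ belong to $W^{s,p}(U)$, and Lemma~\ref{comp principle} delivers $u_M \geq v - \eps$ almost everywhere in $D_\eps$.

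Sending $M \to \infty$ and then $\eps \to 0$ (with $D_\eps$ exhausting $D$) gives $u \geq v$ almost everywhere in $D$; a final application of \eqref{eq:essliminf000} together with continuity of $v$,
\[
u(x) = \essliminf_{y \to x} u(y) \geq \essliminf_{y \to x} v(y) = v(x),
\]
upgrades this to the pointwise inequality $u \geq v$ throughout $D$. The main technical point I anticipate is the boundary separation step together with the bookkeeping needed to ensure that at each stage the pair $(u_M, v-\eps)$ satisfies the $W^{s,p}$ integrability and the ordering required by Lemma~\ref{comp principle} on the intermediate set $U$; after that, the proof is essentially a careful combination of the ingredients already built in Sections~\ref{sec_apriori}--\ref{sec_convergence}.
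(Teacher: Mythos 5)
Your proposal is correct and follows essentially the same route as the paper: properties (i), (ii), (iv) come from Lemma~\ref{min(u,k)}, Lemma~\ref{hM bound} and \eqref{eq:essliminf000}, and property (iii) is established exactly as in the paper's proof by shifting $v$ down by $\eps$, observing that the set $\{u \leq v-\eps\}\cap\overline D$ is compact and disjoint from $\partial D$, applying the comparison principle on a slightly smaller open set, and then upgrading the resulting a.e.\ inequality to a pointwise one via \eqref{eq:essliminf000} and the continuity of $v$. The only deviation is your truncation $u_M$, which is harmless but superfluous: since $u\in W^{s,p}(U)$ for $U\Subset D$, Corollary~\ref{comp principle2} (equivalently Lemma~\ref{comp principle} applied to $u$ itself) already yields $u\geq v-\eps$ a.e.\ in $D_\eps$ without invoking $v_+\in L^\infty(\R^n)$.
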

\begin{proof}
According to the definition of $u$, by Lemma \ref{l.tail in control} and Lemma~\ref{hM bound}, together with~\eqref{eq:essliminf000}, we have that (i--ii) and (iv) of Definition~\ref{def_superharmonic} hold. Thus it remains to check that $u$ satisfies the comparison given in Definition~\ref{def_superharmonic}(iii).
For this, take $D \Subset\Omega$ and a weak solution $v$ in $D $ such that $v\in C(\overline D )$, $v \leq u$ almost everywhere in $\R^n\setminus D $ and $v \leq u$ on $\partial D $. For any $\eps>0$ define $v_\eps:=v-\eps$ and consider the set $K_\eps=\big\{ v_\eps \geq u \big \} \cap \overline D $. Notice that by construction the set $K_\eps$ is compact and $K_\eps \cap \partial  D=\emptyset$. Thus, it suffices to prove that $K_\eps=\emptyset$. This is now a plain consequence of the comparison principle proven in Section~\ref{sec_most}. Indeed, one can find an open set $D_1$ such that $ K_\eps\subset D_1  \Subset D$. Moreover, $v_\eps \leq u$ in $\R^n\setminus D_1$ almost everywhere and thus Corollary~\ref{comp principle2} yields $u\geq v_\eps$ almost everywhere in $D_1$. In particular, $u \geq v-\eps$ almost everywhere in $ D$. To obtain an inequality that holds everywhere in $ D$, fix $x\in D$. Then there exists $r>0$ such that $B_r(x)\subset D$ and
\[
u(x) \geq \essinf_{B_r(x)}u-\eps \geq \inf_{B_r(x)}v-2\,\eps \geq v(x)-3\,\eps,
\]
by \eqref{eq:essliminf000} and continuity of $v$.
Since $\eps>0$ and $x\in D$ were arbitrary, we have $u \geq v$ in $ D$. This finishes the proof.
\end{proof}

From Theorem~\ref{thm_supersuper} and Theorem~\ref{thm_supersuper2} we see that a function is a continuous weak solution in $\Omega$ if and only if it is both $(s,p)$-superharmonic and $(s,p)$-subharmonic in~$\Omega$. 
\begin{corollary} \label{cor_harmharm}
A function $u$ is $(s,p)$-harmonic in $\Omega$ if and only if $u$ is a continuous weak solution in $\Omega$.
\end{corollary}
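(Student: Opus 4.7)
The plan is to prove the two directions separately, as direct consequences of Theorems \ref{thm_supersuper} and \ref{thm_supersuper2}, by checking that the technical hypotheses of those theorems are met for free under either assumption.

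First, assume that $u$ is $(s,p)$-harmonic, that is, both $u$ and $-u$ are $(s,p)$-superharmonic in~$\Omega$. By Definition~\ref{def_superharmonic}(ii), $u$ is lower semicontinuous and $-u$ is lower semicontinuous, so $u$ is upper semicontinuous as well; hence $u$ is continuous in~$\Omega$, and in particular locally bounded there. Definition~\ref{def_superharmonic}(iv) applied to $u$ gives $u_-\in L^{p-1}_{sp}(\R^n)$, and applied to $-u$ gives $u_+=(-u)_-\in L^{p-1}_{sp}(\R^n)$; therefore $u\in L^{p-1}_{sp}(\R^n)$. I would then invoke Theorem~\ref{thm_supersuper}, both for $u$ (which is $(s,p)$-superharmonic, in $L^{p-1}_{sp}(\R^n)$, and locally bounded from above), and for $-u$ under the analogous conditions, to conclude that $u$ is simultaneously a weak supersolution and a weak subsolution in $\Omega$, i.e.\ a continuous weak solution.

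For the converse, suppose $u$ is a continuous weak solution in~$\Omega$. Then $u$ is in particular a lower semicontinuous weak supersolution. Since $u$ is continuous,
\[
u(x)=\lim_{y\to x}u(y)=\essliminf_{y\to x}u(y)\qquad\text{for every }x\in\Omega,
\]
so the hypothesis~\eqref{eq:essliminf000} in Theorem~\ref{thm_supersuper2} is automatic. Applying that theorem yields that $u$ is $(s,p)$-superharmonic in~$\Omega$. The same reasoning applied to $-u$, which is continuous and a weak supersolution by Definition~\ref{def_supersolution}, yields that $-u$ is also $(s,p)$-superharmonic in~$\Omega$, and therefore $u$ is $(s,p)$-harmonic in~$\Omega$ by definition.

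Essentially no obstacle is expected: the corollary is a bookkeeping statement combining the two connection theorems already proven. The only mild points to check carefully are that continuity of $u$ automatically upgrades it to both the lower semicontinuity and the essliminf identity needed by Theorem~\ref{thm_supersuper2}, and that the two tail-space conditions coming from $u$ and $-u$ being $(s,p)$-superharmonic together promote $u_-\in L^{p-1}_{sp}(\R^n)$ to $u\in L^{p-1}_{sp}(\R^n)$, so that Theorem~\ref{thm_supersuper} actually applies in the first direction.
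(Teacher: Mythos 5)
Your argument is correct and follows exactly the route the paper intends: the corollary is stated there as an immediate consequence of Theorem~\ref{thm_supersuper} and Theorem~\ref{thm_supersuper2}, and your write-up simply makes the bookkeeping explicit (continuity from combined lower and upper semicontinuity, $u\in L^{p-1}_{sp}(\R^n)$ from $u_\pm\in L^{p-1}_{sp}(\R^n)$, and the trivial verification of \eqref{eq:essliminf000} for continuous functions). Nothing is missing.
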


\subsection{Pointwise behavior}

We next investigate the pointwise behavior of $(s,p)$-superharmonic functions in $\Omega$ and start with the following lemma.

\begin{lemma} \label{lemma:pointwise 0}
Let $u$ be $(s,p)$-superharmonic in $\Omega$ such that $u = 0$ almost everywhere in $\Omega$.
Then $u=0$ in $\Omega$.
\end{lemma}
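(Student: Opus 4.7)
Plan:

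The strategy is to approximate $u$ from below by continuous weak supersolutions and then upgrade the a.e.\ identity $u=0$ to a pointwise identity by means of the supremum-with-tail estimate of Theorem~\ref{thm_local}.

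Fix $x_0\in\Omega$ and a ball $D=B_{2r}(x_0)\Subset\Omega$, whose complement automatically satisfies the measure density condition~\eqref{eq:dens cond}. Lower semicontinuity of $u$ combined with $u=0$ a.e.\ in $\Omega$ already forces $u\leq 0$ pointwise in $\Omega$: for any $x\in\Omega$ one finds $y_n\to x$ with $u(y_n)=0$, hence $\liminf_{y\to x}u(y)\leq 0$, and lower semicontinuity concludes. Invoking Lemma~\ref{lem_approximation2}, I obtain an increasing sequence $\{u_j\}$ of continuous weak supersolutions in $D$ with $u_j\leq u$ and $u_j\to u$ pointwise in $\R^n$. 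Continuity together with $u_j\leq u\leq 0$ a.e.\ in $D$ gives $u_j\leq 0$ in $D$, and the specific structure of the obstacles $g_j$ (bounded on an intermediate set $U$ with $D\Subset U\Subset\Omega$, and equal to $\min\{j,u\}$ outside $U$) guarantees that $\{u_j\}$ is uniformly bounded in $D$.

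Set $v_j:=-u_j\geq 0$; each $v_j$ is a continuous weak subsolution in $D$, so Theorem~\ref{thm_local} applied on $B_r(x_0)\subset D$ gives, for every $\delta\in(0,1]$,
\[
\esssup_{B_{r/2}(x_0)} v_j \,\leq\, \delta\,{\rm Tail}((v_j)_+;x_0,r/2) + c\,\delta^{-\gamma}\bigg(\mean{B_r(x_0)} v_j^p\dx\bigg)^{1/p}.
\]
As $j\to\infty$, the average term vanishes by dominated convergence (since $v_j\to -u=0$ a.e.\ in $B_r$ under uniform bounds), while the tail term remains bounded uniformly in $j$: $(v_j)_+=(u_j)_-$ is uniformly controlled on $U$ by the obstacle construction and agrees with $u_-$ on $\R^n\setminus U$, which lies in $L^{p-1}_{sp}(\R^n)$ by Definition~\ref{def_superharmonic}(iv). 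Optimizing $\delta$ therefore yields $\esssup_{B_{r/2}}v_j\to 0$, and the continuity of the $v_j$'s promotes this to $\sup_{B_{r/2}}v_j\to 0$. In particular $u_j(x_0)=-v_j(x_0)\to 0$, and combining with the pointwise convergence $u_j(x_0)\to u(x_0)$ supplied by Lemma~\ref{lem_approximation2} forces $u(x_0)=0$. Since $x_0\in\Omega$ was arbitrary, $u\equiv 0$ in $\Omega$.

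The delicate step is the uniform-in-$j$ control of the tail ${\rm Tail}((v_j)_+;x_0,r/2)$, which hinges on the obstacle construction in Lemma~\ref{lem_approximation2}: its separation of scales lets one decouple the (uniformly bounded) local contribution on $U$ from the long-range contribution governed by $u_-\in L^{p-1}_{sp}(\R^n)$. Once this is in place, the passage from the essential supremum produced by Theorem~\ref{thm_local} to a genuine pointwise bound is painless because the approximants $u_j$ are continuous.
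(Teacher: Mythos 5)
Your proof is correct and follows essentially the same route as the paper: approximate $u$ from below by the continuous weak supersolutions of Lemma~\ref{lem_approximation2}, apply the supremum-with-tail estimate of Theorem~\ref{thm_local} to $-u_j$ on a small ball, and let the local average vanish by dominated convergence. The only (harmless) difference is in handling the tail: you keep it merely uniformly bounded in $j$ — which already follows from monotonicity, $(u_j)_-\leq (u_1)_-$ with $(u_1)_-\in L^{p-1}_{sp}(\R^n)$, without invoking the internal obstacle construction — and absorb it by taking the interpolation parameter $\delta$ small before sending $j\to\infty$, whereas the paper takes $\delta=1$, first replaces $u$ by $\min\{u,0\}$, and splits the tail at an intermediate radius $R$, making the far part small by choosing $r\ll R$ first and then $j$ large.
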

\begin{proof}
Since $u$ is lower semicontinuous, we have $u \leq 0$ in $\Omega$.
Furthermore, we can assume that $u \leq 0$ in the whole $\R^n$ by considering the $(s,p)$-superharmonic function $\min\{u,0\}$ instead of $u$.
Let $z\in\Omega$ and take $R>0$ such that $B_{R}(z) \Subset \Omega$.
By Lemma~\ref{lem_approximation2} there is an increasing sequence $\{u_j\}$ of weak supersolutions in $B_{R}(z)$ converging to $u$ pointwise in $\R^n$ such that each $u_j$ is continuous in $\overline B_{R}(z)$.
Then it holds, in particular, that $u_j(z) \leq u(z)$.
Thus, it suffices to show that for every $\eps>0$ there exists $j$ such that $u_j(z) \geq -\eps$.
To this end, let $\eps>0$.
Since $-u_j$ is a weak subsolution in $B_{2r}(z)$ for any $r\leq R/2$,
applying Theorem~\ref{thm_local} with $\delta=1$ we have that
\begin{align} \label{supBrz-uj}
\sup_{B_{r}(z)}(-u_j) &\leq c\left(\mean{B_{2r}} (-u_j)_+^p\dx\right)^{\frac 1p} + {\rm Tail}((-u_j)_+;z,r) \nonumber \\
&\leq c\left(\mean{B_{2r}} |u_j|^p\dx\right)^{\frac 1p} + c\left(r^{sp}\int_{B_R\setminus B_r}|u_j(y)|^{p-1}|z-y|^{-n-sp}\dy\right)^{\frac 1{p-1}} \nonumber \\
&\quad +c\left(r^{sp}\int_{\R^{n}\setminus B_{R}}|u_j(y)|^{p-1}|z-y|^{-n-sp}\dy\right)^{\frac 1{p-1}} \nonumber \\
&\leq c\left(\mean{B_{2r}} |u_j|^p\dx\right)^{\frac 1p} + c\left(r^{sp}\int_{B_R\setminus B_r}|u_j(y)|^{p-1}|z-y|^{-n-sp}\dy\right)^{\frac 1{p-1}} \\
&\quad +c\left(\frac rR\right)^{\frac{sp}{p-1}}{\rm Tail}(u_1;z,R). \nonumber 
\end{align}
Now, we first choose $r$ to be so small that the last term on the right-hand side of~\eqref{supBrz-uj} is smaller than $\eps/3$.
Then we can choose $j$ so large that each of the two first terms on the right-hand side of~\eqref{supBrz-uj} is smaller than $\eps/3$.
This is possible according to the dominated convergence theorem since $u_j \to 0$ almost everywhere in $B_R(z)$ as $j \to \infty$ and $|u_j| \leq |u_1|$ for every $j$.
Consequently, $u_j(z) \geq -\eps$ and the proof is complete.
\end{proof}

An $(s,p)$-superharmonic function has to coincide with its inferior limits  in~$\Omega$.
In particular, the function cannot have isolated smaller values in single points. This gives Theorem~\ref{thm:superharmonic}(i).

\begin{theorem} \label{thm:essliminf}
Let $u$ be $(s,p)$-superharmonic in $\Omega$. Then
\[
u(x)= \liminf_{y \to x} u(y) = \essliminf_{y \to x}u(y) \quad \text{for every } x\in\Omega.
\]
In particular, $\inf_D u = \essinf_D u$ for any open set $D \Subset \Omega$. 
\end{theorem}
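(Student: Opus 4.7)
The plan is to establish the two inequalities $u(x) \le \liminf_{y \to x} u(y) \le \essliminf_{y \to x} u(y)$ for free, and then reduce the non-trivial bound $u(x) \ge \essliminf_{y \to x} u(y)$ to Lemma~\ref{lemma:pointwise 0} by a truncate-and-shift argument. The second inequality holds because $\essinf$ on a ball dominates $\inf$ on the same ball, so $\liminf \le \essliminf$, while the first is just the lower semicontinuity built into Definition~\ref{def_superharmonic}(ii). The main work is therefore to bound $u(x)$ from below by the essential liminf.

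Fix $x \in \Omega$ and set $m := \essliminf_{y \to x} u(y) \in [-\infty,+\infty]$. If $m = -\infty$ there is nothing to prove (we automatically have $u(x) \ge m$), so assume $m > -\infty$ and take an arbitrary $c < m$. By definition of the essential liminf there exists $r > 0$ with $B_r(x) \Subset \Omega$ such that $u \ge c$ almost everywhere in $B_r(x)$. Now consider the function
\[
w := \min\{u, c\} - c.
\]
Observe that $\min\{u, c\}$ is $(s,p)$-superharmonic in $\Omega$, as the pointwise minimum of the $(s,p)$-superharmonic functions $u$ and the constant $c$ (Remark~\ref{remark:min superharmonic}); subtracting the constant $c$ preserves the property since the operator~$\mathcal{L}$ only sees differences of values, and since Definition~\ref{def_superharmonic}(i),(ii),(iv) are clearly stable under translation. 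Restricting to $B_r(x)$, the function $w$ is $(s,p)$-superharmonic in $B_r(x)$ and vanishes almost everywhere there. Lemma~\ref{lemma:pointwise 0} then forces $w \equiv 0$ in $B_r(x)$, and in particular $w(x) = 0$, i.e.\ $u(x) \ge c$. Letting $c \uparrow m$ yields $u(x) \ge m$, which completes the three-way equality.

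For the last assertion, let $D \Subset \Omega$ be open and set $m' := \essinf_D u$. Only the bound $\inf_D u \ge m'$ requires argument. If $m' = -\infty$ it is trivial, otherwise define $W := \min\{u, m'\} - m'$ on $D$. As above, $W$ is $(s,p)$-superharmonic in an open neighbourhood of $D$ contained in $\Omega$, and $W = 0$ almost everywhere in $D$. A direct application of Lemma~\ref{lemma:pointwise 0} (applied on any open set compactly contained in $\Omega$ and containing $D$ where $W$ still vanishes a.e., or more simply, applied componentwise on $D$) gives $W \equiv 0$ in $D$, hence $u \ge m'$ everywhere in $D$.

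The main obstacle is the delicate step $u(x) \ge \essliminf_{y \to x} u(y)$: one cannot access it through the weak supersolution machinery directly, because $u$ need not be a weak supersolution without further assumptions (recall Remark~\ref{remark:riesz}). The point is to realise that after truncating by a constant $c$ slightly below $m$, the difference $\min\{u,c\} - c$ is a bounded $(s,p)$-superharmonic function that is zero almost everywhere on a small ball, putting us exactly in the scope of the already established Lemma~\ref{lemma:pointwise 0}, whose proof relies on the approximation scheme of Lemma~\ref{lem_approximation2} and the supremum estimate with tail from Theorem~\ref{thm_local}.
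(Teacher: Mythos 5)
Your proposal is correct and takes essentially the same route as the paper: the easy inequalities follow from lower semicontinuity and $\liminf\le\essliminf$, and the key bound is obtained exactly as in the paper by applying Lemma~\ref{lemma:pointwise 0} to the $(s,p)$-superharmonic function $\min\{u,c\}-c$ on a small ball where $u\ge c$ almost everywhere, then letting $c\uparrow\essliminf_{y\to x}u(y)$. One small remark on your last paragraph: the final claim already follows directly from the pointwise identity (for $x\in D$ one has $u(x)=\essliminf_{y\to x}u(y)\ge\essinf_D u$), and of your two suggested applications of Lemma~\ref{lemma:pointwise 0} only the one with $D$ itself in the role of $\Omega$ is justified, since $W$ need not vanish a.e.\ on any strictly larger open set.
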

\begin{proof}
Fix $x\in\Omega$ and denote by $\lambda:=\essliminf_{y\to x}u(y)$. Then
\[
\lambda \geq \liminf_{y\to x}u(y) \geq u(x)
\]
by the lower semicontinuity of $u$. To prove the reverse inequality, pick $t<\lambda$.
Then there exists $r>0$ such that $B_r(x) \subset \Omega$ and $u \geq t$ almost everywhere in $B_r(x)$.
By Lemma~\ref{lemma:pointwise 0} the $(s,p)$-superharmonic function
\[
v:=\min\{u,t\}-t
\]
is identically $0$ in $B_r(x)$. In particular, $u(x) \geq t$ and the claim follows by arbitrariness of $t<\lambda$.
\end{proof}

\vs\subsection{Summability of $(s,p)$-superharmonic functions}
We recall a basic result from~\cite[Lemma 7.3]{KMS15}, which is in turn based on the Caccioppoli inequality and the weak Harnack estimates for weak supersolutions presented in~\cite{DKP14}. 
In \cite{KMS15} it is given for equations involving nonnegative source terms, but the proof is identical in the case of weak supersolutions. The needed information is that the weak supersolution belongs locally to $W^{s,p}$.  

\begin{lemma} \label{lemma:lower 2}
Let $u$ be a nonnegative weak  supersolution in $B_{4r} \equiv B_{4r}(x_0)\subset \Omega$.  
Let $h \in (0,s)$, $q \in (0,\bar q)$, where 
\begin{equation}  \label{e.bar q}
\bar q := \min\left\{\frac{n(p-1)}{n-s},p\right\}.
\end{equation}
Then there exists a constant $c \equiv c(n,p,s,\Lambda,s-h,\bar q - q)$ such that 
\begin{equation*} 
\left(\int_{B_{2r}} \mean{B_{2r}} \frac{|u(x)-u(y)|^{q}}{|x-y|^{n+hq}} \dxy \right)^{\frac 1q} \leq \frac{c}{ r^{h}}  \left(\essinf_{B_{r}}  u + \text{\rm Tail}(u_-; x_0, 4r)\right)
\end{equation*}
holds.
\end{lemma}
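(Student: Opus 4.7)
The strategy is to combine the $W^{s,p}$-type Caccioppoli energy bound for truncations (Lemma~\ref{lemma:cacc for bnd super}) with the $L^{\bar q}$-integrability provided by the weak Harnack inequality (Theorem~\ref{thm_weakharnack}), and then interpolate to $W^{h,q}$ by means of H\"older's inequality. Throughout, set $M := \essinf_{B_r} u + {\rm Tail}(u_-; x_0, 4r)$, which is the right-hand side of the claimed inequality.

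A preliminary step is the $L^q$-integrability of $u$: since a direct verification gives $\bar q < \bar t$, the weak Harnack inequality applied with radii $2r$ and $R = 4r$ yields $(\mean{B_{2r}} u^{\bar q})^{1/\bar q} \leq c\,M$, so in particular the super-level sets $\{u>K\}\cap B_{2r}$ have measure at most $c\,r^n (M/K)^{\bar q}$ by Chebyshev's inequality. For the energy side, for each $j\geq 0$ the truncation $v_j := \min\{u,2^j M\}$ is a bounded weak supersolution in $B_{4r}$ by Lemma~\ref{min(u,k)}. Since $u \geq 0$ in $B_{4r}$ we have $(v_j)_- = u_-$, and a short computation shows ${\rm Tail}((v_j)_-; x_0, 3r)\leq c\,M$; Lemma~\ref{lemma:cacc for bnd super} then delivers
\[
\int_{B_{2r}}\int_{B_{2r}} \frac{|v_j(x)-v_j(y)|^p}{|x-y|^{n+sp}}\,\dxy \leq c\, r^{n-sp}\,(2^j M)^p.
\]
Applying H\"older's inequality with exponents $p/q$ and $p/(p-q)$, factoring the kernel as $|x-y|^{-n-hq} = (|x-y|^{-n-sp})^{q/p} \cdot |x-y|^{(n+sp)q/p - n - hq}$, and using that the second factor raised to $p/(p-q)$ equals $|x-y|^{-n+\alpha}$ for $\alpha := pq(s-h)/(p-q)>0$ (which is integrable on $B_{2r}\times B_{2r}$ with bound $c\,r^{n+\alpha}/\alpha$), one arrives, after simplification of the resulting power of $r$, at
\[
\int_{B_{2r}}\int_{B_{2r}} \frac{|v_j(x)-v_j(y)|^q}{|x-y|^{n+hq}}\,\dxy \leq c\,r^{n-hq}\,(2^j M)^q,
\]
with $c$ depending on $1/(s-h)$ through $1/\alpha$.

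The principal difficulty, and main obstacle, lies in removing the factor $2^j$ to pass from the bounded truncations $v_j$ to $u$ itself. A naive $j \to \infty$ limit is ineffective because the right-hand side blows up. I would instead employ a layer-cake decomposition $u = v_0 + \sum_{j\geq 0}(v_{j+1}-v_j)$ together with a careful H\"older argument: each increment $v_{j+1}-v_j$ is pointwise bounded by $2^j M$ and is supported as a non-vanishing function on $\{u>2^j M\}\cap B_{2r}$, a set of measure at most $c\,r^n\,2^{-j\bar q}$; coupling these pieces of information and rerunning the H\"older interpolation with the weak-Harnack exponent $\bar q$ in place of $p$ in certain factors gives an improved dyadic estimate of the form $[v_{j+1}-v_j]_{W^{h,q}(B_{2r})}\leq c\,r^{n/q-h}\,M\,2^{-j\delta}$ for some $\delta = \delta(\bar q - q)>0$. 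The geometric series $\sum_j 2^{-j\delta}$ then converges, producing the claimed estimate; it is precisely in this final step that the strict inequalities $h<s$ and $q<\bar q$ are essential, explaining the blow-up of the constant $c$ as $h\to s$ or $q \to \bar q$.
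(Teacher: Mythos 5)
Your overall skeleton -- weak Harnack inequality for the decay of superlevel sets, Caccioppoli estimate for the truncations $v_j=\min\{u,2^jM\}$, H\"older interpolation from $W^{s,p}$ down to $W^{h,q}$, and a dyadic summation -- is the same scheme as the proof the paper relies on; note that the paper does not reprove this lemma but quotes \cite[Lemma 7.3]{KMS15}, whose argument is exactly of this type. Your preliminary steps are fine: indeed $\bar q<\bar t$, so Theorem~\ref{thm_weakharnack} gives $\big(\mean{B_{2r}}u^{\bar q}\,{\rm d}x\big)^{1/\bar q}\le c\,M$ and hence $|\{u>2^jM\}\cap B_{2r}|\le c\,r^n2^{-jt}$ for every $t<\bar t$; Lemma~\ref{lemma:cacc for bnd super} gives $[v_j]_{W^{s,p}(B_{2r})}^p\le c\,r^{n-sp}(2^jM)^p$; and your exponent bookkeeping in the H\"older step (the factor $|x-y|^{-n+\alpha}$ with $\alpha=pq(s-h)/(p-q)$ and the resulting power $r^{n-hq}$) is correct.

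The gap is precisely at the step you yourself flag as the main obstacle, and the fix you sketch does not close it. With the three pieces of information you have for the increments $w_j=v_{j+1}-v_j$ (namely $\|w_j\|_\infty\le 2^jM$, support of measure $\le c\,r^n2^{-jt}$, and the level-$k$ energy bound of order $k^p$), the H\"older interpolation restricted to $\{x\in A_j \ \mbox{or}\ y\in A_j\}$ produces the dyadic factor $2^{\,j[q-t(p-q)/p]}$, which is summable only for $q<p\bar t/(p+\bar t)$; this threshold is in general strictly smaller than $\bar q$ (for $n=2$, $s=1/2$, $p=2$ one has $\bar t=2$, $\bar q=4/3$, but the series converges only for $q<1$), and one can check that even optimizing an extra near/far-diagonal splitting these ingredients cannot reach all $h\in(0,s)$, $q\in(0,\bar q)$. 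What the cited proof actually uses, and what is missing from your outline, is a Caccioppoli-type estimate for the truncations that is \emph{linear} in the truncation level: testing the supersolution inequality with a cutoff times $\min\{u,k\}$ and bounding the resulting Riesz-measure/Tail terms by $M^{p-1}r^{n-sp}$ (which is where nonnegativity and the weak Harnack inequality enter) yields $[\min\{u,k\}]_{W^{s,p}(B_{2r})}^p\le c\,r^{n-sp}\big(kM^{p-1}+M^p\big)$, i.e.\ $2^jM^p$ at level $k=2^jM$ instead of $(2^jM)^p$. Rerunning your interpolation with this bound gives the dyadic factor $2^{\,j[q/p-t(p-q)/p]}$, summable (letting $t\uparrow\bar t$) precisely when $q<p\bar t/(1+\bar t)=n(p-1)/(n-s)$, which combined with the constraint $q<p$ needed for the H\"older exponent is exactly $q<\bar q$ -- this is also why $\bar q$ has the stated form. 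So the architecture is right, but the decisive energy estimate, which is not a consequence of Lemma~\ref{lemma:cacc for bnd super}, must be proved and inserted; ``replacing $p$ by $\bar q$ in certain factors'' is not a substitute, since no level-independent $W^{s,\bar q}$-type energy bound is available.
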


The next theorem tells that the positive part of an $(s,p)$-superharmonic function also belongs to the Tail space and describes summability properties of solutions, giving Theorem~\ref{thm:superharmonic}(ii).

\begin{theorem} \label{t.summability}
Suppose that $u$ is an $(s,p)$-superharmonic function in $B_{2r}(x_0)$. Then $u \in L_{sp}^{p-1}(\R^n)$. Moreover, defining the quantity 
\begin{equation*}  
M:= \sup_{z \in B_{r}(x_0)} \left( \inf_{B_{r/8}(z)} u_+ + {\rm Tail}(u_-;z,r/2) + \sup_{B_{3r/2}(x_0)} u_- \right),  
\end{equation*}
then $M$ is finite and for $h \in (0,s)$, $q \in (0,\bar q)$ and $t \in (0, \bar t)$, where $\bar q$ is as in~\eqref{e.bar q} and $\bar t$ as in~\eqref{e.bar t}, there is a positive finite constant $C \equiv C(n,p,s,\Lambda,s-h,\bar q-q,\bar t - t)$ such that 
\begin{equation} \label{e.WhqLt bound}
r^h \left[u\right]_{W^{h,q}(B_r(x_0))} + \|u\|_{L^{t}(B_{r}(x_0))} \leq C  M.
\end{equation}
\end{theorem}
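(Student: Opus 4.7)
The plan is to first establish finiteness of $M$, then build both estimates in~\eqref{e.WhqLt bound} by applying Lemma~\ref{lemma:lower 2} and Theorem~\ref{thm_weakharnack} to bounded truncations of $u$ on small balls, and finally to deduce $u\in L_{sp}^{p-1}(\R^n)$ via Lemma~\ref{l.tail in control}. For the finiteness of $M$, the term $\sup_{B_{3r/2}(x_0)}u_-$ is finite since $u_-$ is upper semicontinuous and pointwise finite on the compact set $\overline{B_{3r/2}(x_0)}\subset B_{2r}(x_0)$. Uniform boundedness in $z\in B_r(x_0)$ of ${\rm Tail}(u_-;z,r/2)$ follows by splitting the integration at $B_{4r}(x_0)$: the inner part is controlled by the local $L^{p-1}$-integrability of $u_-$ (a consequence of $u_-\in L^{p-1}_{sp}(\R^n)$), while the outer part uses $|y-z|\ge\tfrac12|y-x_0|$ to reduce to ${\rm Tail}(u_-;x_0,4r)$. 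For the remaining term, covering $\overline{B_r(x_0)}$ by finitely many balls $B_{r/16}(z_i)$, for every $z\in B_r(x_0)$ one has $B_{r/16}(z_i)\subset B_{r/8}(z)$ for some $i$, and $\inf_{B_{r/16}(z_i)}u_+<\infty$ because the lower semicontinuous function $u_+$ is finite almost everywhere (so $\inf=\essinf$ on the open ball).

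Set $C:=\sup_{B_{3r/2}(x_0)}u_-$ and $v_k:=\min\{u,k\}+C$. By Remark~\ref{remark:min superharmonic}, $\min\{u,k\}$ is $(s,p)$-superharmonic, and being bounded from above Theorem~\ref{thm_supersuper} promotes it to a weak supersolution in $B_{2r}(x_0)$. Hence $v_k$ is a nonnegative weak supersolution on $B_{r/2}(z)\subset B_{3r/2}(x_0)$ for every $z\in B_r(x_0)$, with $(v_k)_-\le u_-$ and $|v_k(x)-v_k(y)|\le|\min\{u,k\}(x)-\min\{u,k\}(y)|$. The bounds $\essinf_{B_{r/8}(z)}v_k\le\inf_{B_{r/8}(z)}u_++C$ and ${\rm Tail}((v_k)_-;z,r/2)\le{\rm Tail}(u_-;z,r/2)$ show that Lemma~\ref{lemma:lower 2} (with its ``$r$'' taken to be $r/8$) and Theorem~\ref{thm_weakharnack} (with $R=r/2$, inner radius $r/16$) applied to $v_k$ yield
\begin{align*}
\bigg(\int_{B_{r/4}(z)}\mean{B_{r/4}(z)}\frac{|v_k(x)-v_k(y)|^{q}}{|x-y|^{n+hq}}\dxy\bigg)^{1/q}&\le\frac{c}{r^{h}}\,M,\\
\bigg(\mean{B_{r/16}(z)}v_k^{t}\dx\bigg)^{1/t}&\le c\,M,
\end{align*}
uniformly in $k$ and $z$.

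Since $\min\{u,k\}\nearrow u$ pointwise, Fatou's lemma passes both inequalities above from $v_k$ to $u$ (and to $u+C$ for the $L^t$ inequality). Covering $\overline{B_r(x_0)}$ by finitely many balls $B_{r/16}(z_i)$ and summing gives the $L^t$-part of~\eqref{e.WhqLt bound}. For the Sobolev seminorm on $B_r(x_0)\times B_r(x_0)$ one splits at $|x-y|=r/16$: the near-diagonal region is controlled by the local $W^{h,q}$-bounds centered at the $z_i$, while the far region uses $|x-y|^{-n-hq}\le c\,r^{-n-hq}$ and H\"older's inequality (valid since $q\le\bar q\le\bar t$) to reduce to the already-established $L^t$-bound. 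For the tail integrability, Lemma~\ref{l.tail in control} applied to each $\min\{u,k\}$ gives a uniform bound on ${\rm Tail}(\min\{u,k\};x_0,r/2)$ in terms of its $L^{p-1}$- and $W^{h,p-1}$-norms on a smaller ball and ${\rm Tail}(u_-;x_0,r/2)$; monotone convergence on the left and the bounds already obtained on the right (choosing $q,t$ slightly above $p-1$) produce $u\in L^{p-1}_{sp}(\R^n)$.

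The hard part is the mutual coupling between local Sobolev-type regularity and global tail integrability: since $u$ is a priori neither a weak supersolution nor in $W^{s,p}_{\mathrm{loc}}$, both have to be accessed through the truncated supersolutions $v_k$ with a delicate monotone-limit passage, and the off-diagonal contribution to the $W^{h,q}$-seminorm forces the $L^t$-bound to be derived before the Sobolev estimate on $B_r(x_0)$ can be closed.
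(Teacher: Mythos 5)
Your proposal is correct and follows essentially the same route as the paper's proof: truncate to $\min\{u,k\}$, use Theorem~\ref{thm_supersuper} to obtain weak supersolutions, apply Theorem~\ref{thm_weakharnack} and Lemma~\ref{lemma:lower 2} on balls centered at $z\in B_r(x_0)$, pass to the limit by Fatou, conclude \eqref{e.WhqLt bound} by a covering argument, and obtain $u\in L^{p-1}_{sp}(\R^n)$ from Lemma~\ref{l.tail in control} with $t=q=p-1$. You merely spell out details the paper leaves implicit (finiteness of $M$, the off-diagonal part of the seminorm covering, the monotone-convergence passage in the tail estimate, and the replacement of $\essinf$ by $\inf$, which rests on Theorem~\ref{thm:essliminf}), so no further changes are needed.
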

\begin{proof}
First, $M$ is finite  
due to assumptions (i) and (iv) of Definition~\ref{def_superharmonic}.
Since $u$ is locally bounded from below, we may assume, without loss of generality, that $u$ is nonnegative in $B_{3r/2}(x_0)$.
Let $u_k := \min\{u,k\}$, $k \in \mathbb{N}$. By Theorem~\ref{thm_supersuper} we have that $u_k$ is a lower semicontinuous weak supersolution in $B_{2r}(x_0)$. 
Let $z \in B_r(x_0)$. The weak Harnack estimate (Theorem~\ref{thm_weakharnack}) for $u_k$ together with Fatou's Lemma, after letting $k \to \infty$,  then imply that
\begin{equation} \label{e.Lt bound}
r^{\frac nt}\|u\|_{L^t(B_{r/4}(z))} \leq c \inf_{B_{r/2}(z)}u + c\,{\rm Tail}(u_-;z,r/2)
\end{equation}
for any $t \in (0, \bar t)$. 
Similarly, Lemma~\ref{lemma:lower 2} applies for $u_k$, and we deduce from it, by Fatou's Lemma  
that 
\begin{equation} \label{e.Whq bound}
r^{h+\frac nq}[u]_{W^{h,q}(B_{r/4}(z))} \leq c \inf_{B_{r/8}(z)}u + c\,{\rm Tail}(u_-;z,r/2)
\end{equation}
for any $h \in (0,s)$ and $q \in (0,\bar q)$.
Now \eqref{e.WhqLt bound} follows from \eqref{e.Lt bound} and \eqref{e.Whq bound} after a covering argument.
Finally, Lemma~\ref{l.tail in control} implies that $u \in L_{sp}^{p-1}(\R^n)$  
from the boundedness of $\left[u\right]_{W^{h,q}(B_r(x_0))}$ and $\|u\|_{L^{t}(B_{r}(x_0))}$ when taking $t=q=p-1$. 
\end{proof}

\vs\subsection{Convergence properties}
We next collect some convergence results related to $(s,p)$-superharmonic functions. 
The first one is that the limit of an increasing sequence of $(s,p)$-super-\break harmonic 
 functions in an open set $\Omega$ is either identically $+\infty$ or $(s,p)$-super-\break harmonic 
  in $\Omega$. Observe that $\Omega$ does not need to be a connected set which is in strict contrast with respect to the local setting. 

\begin{lemma} \label{l.increasing}
Let $\{u_k\}$ be an increasing sequence of $(s,p)$-superharmonic functions in an open set $\Omega$ converging pointwise to a function $u$ as $k \to \infty$. Then either $u \equiv + \infty$ in $\Omega$ or $u$ is $(s,p)$-superharmonic in $\Omega$.
\end{lemma}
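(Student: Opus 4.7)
If $u\equiv+\infty$ in $\Omega$ there is nothing to prove, so I assume there is some $x_0\in\Omega$ with $u(x_0)<+\infty$ and verify the four clauses of Definition~\ref{def_superharmonic}. Three of them are immediate: the pointwise supremum of lower semicontinuous functions is l.s.c., giving~(ii); $u\geq u_1>-\infty$ everywhere supplies the lower part of~(i); and $u_-\leq(u_1)_-\in L^{p-1}_{sp}(\R^n)$ gives~(iv). For the a.e.\ finiteness required by~(i), I plan to apply the summability estimate of Theorem~\ref{t.summability} to each $u_k$ on a ball $B_{2r}(x_0)\Subset\Omega$: the controlling quantity is dominated by $\inf_{B_{r/8}(x_0)}(u_k)_+ + {\rm Tail}((u_k)_-;x_0,r/2) + \sup_{B_{3r/2}(x_0)}(u_k)_-$, and the first summand is at most $u_k(x_0)\leq u(x_0)$ uniformly in $k$, while the last two are uniformly controlled because $(u_k)_-\leq(u_1)_-\in L^{p-1}_{sp}(\R^n)$. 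Fatou's Lemma then delivers a uniform $L^t$-bound, hence $u\in L^t_{\rm loc}$ near $x_0$; a chaining argument propagates a.e.\ finiteness throughout $\Omega$.

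For the comparison~(iii), let $D\Subset\Omega$ and $v\in C(\overline D)$ be a weak solution in $D$ with $v_+\in L^\infty(\R^n)$ satisfying the boundary and exterior inequalities. Set $M:=\|v_+\|_\infty$ and consider the truncations $u_k^M:=\min\{u_k,M\}$ and $u^M:=\min\{u,M\}$. By Remark~\ref{remark:min superharmonic} each $u_k^M$ is $(s,p)$-superharmonic; being bounded above with $(u_k^M)_-\leq(u_1)_-\in L^{p-1}_{sp}(\R^n)$, Theorem~\ref{thm_supersuper} makes it a weak supersolution in $\Omega$. The sequence $\{u_k^M\}$ is increasing and trapped between $-(u_1)_-$ and the constant $M$, both in $L^{p-1}_{sp}(\R^n)$, so Theorem~\ref{lemma:conv supersolution} promotes $u^M$ to a weak supersolution in $\Omega$. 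Its essliminf representative $\tilde u^M(x):=\essliminf_{y\to x}u^M(y)$ is l.s.c.\ by Theorem~\ref{lsc representative} and equals $u^M$ almost everywhere, and Theorem~\ref{thm_supersuper2} then identifies $\tilde u^M$ as an $(s,p)$-superharmonic function in $\Omega$. Since $\tilde u^M\geq u^M$ pointwise and $\tilde u^M=u^M$ a.e., the boundary and exterior hypotheses transfer to $\tilde u^M$, so Definition~\ref{def_superharmonic}(iii) applied to $\tilde u^M$ yields $\tilde u^M\geq v$ in $D$, whence $u\geq u^M\geq v$ almost everywhere in $D$.

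The main obstacle is promoting this a.e.\ comparison to the pointwise statement $u\geq v$ everywhere in $D$ demanded by Definition~\ref{def_superharmonic}(iii). Following the endgame of the proof of Theorem~\ref{thm_supersuper2}, this reduces to the pointwise essliminf identity $u(x)=\essliminf_{y\to x}u(y)$ at every $x\in\Omega$ with $u(x)<+\infty$. The inequality $u(x)\leq\essliminf_{y\to x}u(y)$ is immediate from l.s.c., while the reverse is the delicate point: I would obtain it by combining the essliminf identity for each $u_k$ (Theorem~\ref{thm:essliminf}) with the monotonicity $u_k\nearrow u$ and the uniform tail control, exploiting precisely the interplay between local and nonlocal contributions emphasized in the paper. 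Once this identity is in hand, fixing $x\in D$ and $\epsilon>0$ and choosing $r$ small with $B_r(x)\subset D$, continuity of $v$ together with $u\geq v$ a.e.\ in $D$ yields $u(x)\geq\essinf_{B_r(x)}u-\epsilon\geq v(x)-2\epsilon$, which closes the argument upon letting $\epsilon\to 0$.
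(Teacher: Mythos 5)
Your reduction to the a.e.\ comparison is correct and is in fact a nice alternative route for that partial step: truncating at $M=\|v_+\|_\infty$, invoking Theorem~\ref{thm_supersuper}, Theorem~\ref{lemma:conv supersolution}, Theorem~\ref{lsc representative} and Theorem~\ref{thm_supersuper2}, and comparing the regularization $\tilde u^M$ with $v$ does give $u\geq v$ a.e.\ in $D$. But the step you yourself flag as ``the main obstacle'' is a genuine gap, and your plan for it is not an argument. Monotonicity plus the identity $u_k(x)=\essliminf_{y\to x}u_k(y)$ only yields $u(x)=\lim_k u_k(x)\leq \essliminf_{y\to x}u(y)$, i.e.\ the trivial direction; the reverse inequality is exactly the nontrivial content, and nothing in ``uniform tail control'' produces it without a quantitative estimate. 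To make it work you would have to redo, for the limit, the machinery of Lemma~\ref{lemma:pointwise 0}/Theorem~\ref{lsc representative}: on a ball where $u\geq t+\delta$ a.e.\ one applies Theorem~\ref{thm_local} with a small interpolation parameter $\delta'$ to the subsolutions $(t+\delta)-\min\{u_k,t+\delta\}$ (weak supersolutions by Theorem~\ref{thm_supersuper}), kills the tail by the choice of $\delta'$ and the local term by letting $k\to\infty$, and so contradicts $u_k(x)\leq u(x)=t$. None of this is carried out in your proposal. The paper sidesteps the whole a.e.-versus-everywhere issue differently: using compactness of $\{u\leq v-\eps\}\cap D$ and the stability result Lemma~\ref{l.stability}, it constructs continuous weak solutions $v_k\leq u_k$ converging locally uniformly to $v-\eps$, so Definition~\ref{def_superharmonic}(iii) applied to each $u_k$ gives a comparison that is already pointwise and survives the limit.

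There are two further gaps in the finiteness part. First, the controlling quantity in Theorem~\ref{t.summability} is $\sup_{z\in B_r(x_0)}\inf_{B_{r/8}(z)}u_+ + \dots$, not $\inf_{B_{r/8}(x_0)}u_+$, so it is \emph{not} bounded by $u_k(x_0)\leq u(x_0)$ uniformly in $k$ (a priori $u$ could be $+\infty$ on a sub-ball of $B_r(x_0)$); what you actually need is the weak Harnack inequality (Theorem~\ref{thm_weakharnack}) applied to the truncations $\min\{u_k,m\}$, whose infimum over a ball containing $x_0$ is controlled by $u_k(x_0)$ via Theorem~\ref{thm:essliminf}. Second, ``a chaining argument propagates a.e.\ finiteness throughout $\Omega$'' fails when $\Omega$ is disconnected, and the lemma (as the paper stresses) makes no connectedness assumption; the dichotomy across components is a nonlocal phenomenon. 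The paper's Steps 2--3 handle it: if $u$ is finite a.e.\ on some open $D\Subset\Omega$, then $u$ is $(s,p)$-superharmonic there and Theorem~\ref{t.summability} forces $u\in L^{p-1}_{sp}(\R^n)$, which is incompatible with $u=+\infty$ on any set of positive measure elsewhere in $\Omega$, since on such a set the $L^{p-1}$-norms of $u_k$ blow up and the summability estimate then forces $u\equiv+\infty$ on a ball and $u\notin L^{p-1}_{sp}(\R^n)$. Your proof as written does not reach the stated dichotomy over all of $\Omega$.
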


\begin{proof}
Observe that since $u \geq u_1$ and $(u_1)_- \in L_{sp}^{p-1}(\R^n)$ by Definition~\ref{def_superharmonic}(iv), we also have that $u_- \in L_{sp}^{p-1}(\R^n)$.

\smallskip
\emph{Step 1.} Assume first that there is an open set $D \subset \Omega $ such that $u$ is finite almost everywhere in $D$. Then we clearly have that $u$ satisfies (i--ii), (iv) of Definition~\ref{def_superharmonic} in $D$. Thus we have to check Definition~\ref{def_superharmonic}(iii). 
Let $D_4 \Subset D$ and let $v$ be as in Definition~\ref{def_superharmonic}(iii) (with $D\equiv D_4$), i.~\!e., $v \in C(\overline{D}_4)$ is a weak solution in $D_4$ such that $v_+ \in L^\infty(\R^n)$ and $v \leq u$ on $\partial D_4$ and almost everywhere on $\R^n \setminus D_4$. For any $\eps>0$, by the lower semicontinuity of $u-v$, there are open sets $D_1, D_2, D_3$ such that $D_1 \Subset  D_2 \Subset D_3 \Subset D_4$, $\R^n \setminus D_2$ satisfies the measure density condition \eqref{eq:dens cond}, and $\{u \leq v - \eps\} \cap D_4 \subset D_1$.
In particular, $u > v - \eps$ on $D_4 \setminus D_1$ and almost everywhere on $\R^n \setminus D_4$.
Since $\overline{D}_3 \Subset D_4$ we have by the compactness that there is large enough $k_\eps$ such that $\overline{D}_3 \setminus  D_2 \Subset \{u_k > v - \eps\} $ for $k>k_\eps$. Indeed, since
\[
\{u>v-\eps\} \cap D_4 = \bigcup_{k} \{u_k>v-\eps\} \cap D_4,
\]
we have that $\big\{ \{u_k>v-\eps\} \cap D_4 \big\}_k$ is an open cover for the compact set $\overline{D}_3 \setminus  D_2$. Defining $\widetilde u_k = v - \eps$ on $D_3 \setminus D_2$ and $\widetilde u_k = \min\{u_k,v-\eps\}$ on $\R^n \setminus D_3$, we have by Lemma~\ref{l.stability} below (applied with $\Omega \equiv D_3$, $D\equiv D_2$, $u_k \equiv \widetilde u_k$) that there is a sequence of weak solutions $\{v_k\}$ in $D_2$ such that $v_k \in C(\overline D_2)$, $v_k \to v-\eps$ in $D_2$ and almost everywhere in $\R^n \setminus D_2$, and that $v_k \leq u_k$ on $\partial D_2$ and almost everywhere on $\R^n \setminus D_2$ whenever $k>k_\eps$. Therefore, by Definition~\ref{def_superharmonic}(iii), $u_k \geq v_k$ in $D_2$ as well. Since the convergence of $v_k \to v-\eps$ is uniform in $\overline{D}_1$ by Arzel\`a--Ascoli Theorem as $k \to \infty$, we obtain that $u \geq v -2\eps$ in $D_1$, and therefore also in the whole $D_4$. This shows that $u$ is $(s,p)$-superharmonic in $D$. 

\smallskip
\emph{Step 2.} Let us next assume that $u$ is not finite on a Borel subset $E$ of $\Omega$ having positive measure. Using inner regularity of the Lebesgue measure we find a compact set $K \subset \Omega$ with positive measure such that $u = +\infty$ on $K$. Then there has to be a ball $B_{r}(x_0)$ such that $| K \cap B_{r}(x_0)| > 0$ and $B_{2r}(x_0) \Subset \Omega$. In particular, for nonnegative $(s,p)$-superharmonic functions defined as $w_k := u_k - \inf_{B_{2r}(x_0)} u_1$, $k \in \mathbb{N}$, we have by the monotone convergence theorem that $\| w_k  \|_{L^{p-1}(B_r(x_0))} \to + \infty$ as $k \to \infty$. Then Theorem~\ref{t.summability} implies that $\inf_{B_\rho(z)} w_k \to +\infty$ as $k \to \infty$ for some smaller ball $B_\rho(z)$ and that $u \equiv + \infty$ in $B_\rho(z)$.  This also implies that $u \notin L_{sp}^{p-1}(\R^n)$. 

\smallskip
\emph{Step 3.} Conclusion. If there is \emph{any} non-empty open set $D \Subset \Omega $ such that $u$ is finite almost everywhere in $D$, then Step 1 yields that $u$ is $(s,p)$-superharmonic in $D$. Therefore Theorem~\ref{t.summability} implies that in fact $u \in L_{sp}^{p-1}(\R^n)$. By Step 2 this excludes the possibility of having a Borel subset $E$ of $\Omega$ with positive measure such that $u$ is not finite on $E$.  Suppose now that there is $E$ as in Step 2. The only possibility that this situation occurs is that \emph{every} ball $B_r(z)$ such that $B_{2r}(z) \Subset \Omega $ contains a Borel set $E_{z,r}$ with positive measure such that $u$ is not finite on $E_{z,r}$ (otherwise $B_r(z)$ would work as $D$).
Step~2~then implies that $\inf_{B_r(z)} u = + \infty$, and hence either $u \equiv + \infty$ in $\Omega$ or $u$ is finite almost everywhere in $\Omega$, implying that $u$ is $(s,p)$-superharmonic in $\Omega$ by Step 1. 
\end{proof}

In the  proof above we appealed to the following stability result. 

\begin{lemma} \label{l.stability}
Suppose that $v$ is a continuous weak solution in $\Omega$ and let $D \Subset \Omega$ be an open set such that $\R^n \setminus D$ satisfies the measure density condition~\eqref{eq:dens cond}. Assume further that there are $h,g \in L_{sp}^{p-1}(\R^n)$ and a sequence 
$\{ u_k \}$ such that $h \leq u_k \leq g$ and $u_k \to v$ almost everywhere in $\R^n \setminus \Omega$ as $k \to \infty$.
Then there is a sequence of weak solutions $\{v_k\}$ in $D$ such that $v_k \in C(\overline D)$, $v_k = v$ on $\Omega \setminus D$, $v_k = u_k$ on $\R^n \setminus \Omega$,  and  $v_k \to v$ everywhere in $D$ and almost everywhere on $\R^n \setminus D$ as $k \to \infty$. 
\end{lemma}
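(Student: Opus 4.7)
The plan is to realize each $v_k$ as the Dirichlet solution in $D$ whose boundary data coincide with $v$ on $\Omega\setminus D$ and with $u_k$ on $\R^n\setminus\Omega$, constructed via the obstacle problem framework of Theorem~\ref{obst prob sol}, Corollary~\ref{obst prob free} and Theorem~\ref{thm:cont up to bdry}, and then to pass to the limit by exploiting the uniform a~priori estimates for weak solutions established earlier in this section.

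To build $v_k$, I would fix an intermediate open set $U$ with $D\Subset U\Subset\Omega$ and set
\[
\widetilde g_k(x):=\begin{cases}v(x),& x\in\Omega,\\[0.3ex] u_k(x),& x\in\R^n\setminus\Omega.\end{cases}
\]
Since $v$ is a continuous weak solution in $\Omega$, Lemma~\ref{l.tail in control} (applied to $v$ and $-v$) yields $v\in W^{s,p}_{\rm loc}(\Omega)\cap L^{p-1}_{sp}(\R^n)$, whereas $|u_k|\leq|h|+|g|$ with $h,g\in L^{p-1}_{sp}(\R^n)$; hence $\widetilde g_k\in W^{s,p}(U)\cap L^{p-1}_{sp}(\R^n)$. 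Taking $v_k\in\mathcal K_{\widetilde g_k,-\infty}(D,U)$ to be the unique solution of the obstacle problem without obstacle (Theorem~\ref{obst prob sol}), Corollary~\ref{obst prob free} promotes $v_k$ to a weak solution in $D$, and since $\R^n\setminus D$ satisfies the measure density condition and $\widetilde g_k=v$ is continuous in $U$, Theorem~\ref{thm:cont up to bdry} gives $v_k\in C(\overline D)$. The required boundary identities $v_k=v$ on $\Omega\setminus D$ and $v_k=u_k$ on $\R^n\setminus\Omega$ then hold by construction.

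For the uniform a~priori bounds I would introduce the $k$-independent envelopes $\widetilde G^{\pm}$ obtained by replacing $u_k$ with $g$ (respectively $h$) in the definition of $\widetilde g_k$, and let $w^\pm$ be the corresponding Dirichlet solutions in $D$ constructed exactly as above. Since $\widetilde G^{-}\leq\widetilde g_k\leq\widetilde G^{+}$ a.e.\ on $\R^n\setminus D$, Lemma~\ref{comp principle} forces $w^{-}\leq v_k\leq w^{+}$ in $D$, providing a $k$-independent $L^\infty$-bound on $\{v_k\}$ on compact subsets of $D$. Together with the uniform tail bound inherited from $v,h,g\in L^{p-1}_{sp}(\R^n)$, Theorem~\ref{thm_holdere} then supplies a uniform modulus of continuity for $\{v_k\}$ on compact subsets of $D$.

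Finally, by Arzel\`a--Ascoli, every subsequence of $\{v_k\}$ admits a further subsequence converging locally uniformly in $D$ to some $v^\ast\in C(D)$. Outside $D$ one has $v_k=v$ on $\Omega\setminus D$ and $v_k=u_k\to v$ a.e.\ on $\R^n\setminus\Omega$, so $v_k\to v$ a.e.\ on $\R^n\setminus D$ and in particular $v^\ast=v$ a.e.\ there. Corollary~\ref{lemma:conv solution}, applied with the envelopes of the preceding paragraph, ensures that $v^\ast$ is a weak solution in $D$, whence Corollary~\ref{comp principle2} forces $v^\ast=v$ in $D$. Since this identification of the limit is the same along every convergent subsequence, the full sequence $\{v_k\}$ converges to $v$ locally uniformly, and hence pointwise, in $D$, and a.e.\ on $\R^n\setminus D$. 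The main obstacle is precisely this last passage to the limit: the convergence \emph{inside} $D$ is not built into the construction and must be extracted from the uniform regularity estimates, whose $k$-independence rests crucially on the bilateral envelope $h\leq u_k\leq g$ keeping the nonlocal tail contributions in the equation for $v_k$ under uniform control.
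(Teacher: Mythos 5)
Your construction of $v_k$ is exactly the paper's (solve in $\mathcal K_{\widetilde g_k,-\infty}(D,U)$ with $\widetilde g_k=v\chi_\Omega+u_k\chi_{\R^n\setminus\Omega}$), and your compactness route — trapping $v_k$ between the envelope solutions $w^\pm$ by Lemma~\ref{comp principle}, then interior H\"older estimates and Arzel\`a--Ascoli — is a legitimate substitute for the paper's route (a uniform $W^{s,p}(U)$ bound plus the compact embedding and Corollary~\ref{harnack conv 0}). The genuine gap is the final identification of the limit: the appeal to Corollary~\ref{comp principle2} to conclude $v^\ast=v$ in $D$ does not apply. That corollary (and Lemma~\ref{comp principle} behind it) compares a weak supersolution and a weak subsolution that are defined in an open set strictly containing $D$, equivalently that belong to $W^{s,p}(\Omega')$ for some $\Omega'\Supset D$ so that $(u-v)_-$ is an admissible test function in $W^{s,p}_0(D)$. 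Your limit $v^\ast$ (extended by $v$ outside $D$) is only known to be a weak solution in $D$, hence only $W^{s,p}_{\rm loc}(D)$: nothing in your argument controls its Gagliardo energy across $\partial D$, so $(v^\ast-v)\chi_U$ is not known to lie in $W^{s,p}_0(D)$ and the comparison machinery cannot be invoked. Knowing $v^\ast=v$ a.e.\ outside $D$ and that both are solutions in $D$ does not by itself give uniqueness; you need either Sobolev regularity of $v^\ast$ across $\partial D$ or boundary-limit information at $\partial D$ (to use Theorem~\ref{thm:comparison}), and your interior estimates provide neither, since Arzel\`a--Ascoli on compact subsets of $D$ says nothing about $v^\ast$ up to $\overline D$.

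This identification is precisely where the paper does its real work: testing the weak formulation of $v_k$ with $(v_k-v)\chi_U$ yields the uniform energy bound \eqref{e.vkWspU uniform}, which both gives $W^{s,p}(U)$ control of the limit and makes $\eta_k=(v-v_k)\chi_U$ an admissible test function; subtracting the weak formulations of $v$ and $v_k$, splitting into a near and a far part, and using dominated convergence, the monotonicity of $L$, and Fatou's lemma then forces $\widetilde v=v$ a.e.\ in $D$. If you want to keep your softer compactness argument, a viable repair would be to upgrade the equicontinuity to $\overline D$: the restriction of the data $\widetilde g_k$ to $U$ is the fixed continuous function $v$, so Theorem~\ref{thm:cont up to bdry} together with Remark~\ref{remark:unifcont} (plus the uniform tail bounds coming from $h\leq u_k\leq g$) gives a $k$-independent modulus of continuity on $\overline D$, hence $v^\ast\in C(\overline D)$ with $v^\ast=v$ on $\partial D$, after which Corollary~\ref{cor_harmharm} and the comparison principle of Theorem~\ref{thm:comparison} identify $v^\ast$ with $v$. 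As written, however, the proposal stops one step short, and that step cannot be bridged by Corollary~\ref{comp principle2}.
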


\begin{proof}
Let $U$ be such that $D \Subset U \Subset \Omega$ and $v \in W^{s,p}(U)$. 
Setting $g_k := v$ on $\Omega$ and $g_k = u_k$ on  $\R^n \setminus \Omega$, we find by Corollary~\ref{obst prob free} functions $\{v_k\}$, $v_k \in \mathcal K_{g_k,-\infty}(D,U)$, as in the statement. Indeed, $g_k \in W^{s,p}(U) \cap L_{sp}^{p-1}(\R^n)$.
We may test the weak formulation of $v_k$ with $\phi_k :=(v_k-v)\chi_{U} \in W_0^{s,p}(D)$ and obtain after straightforward manipulations (see e.g. proof of \cite[Lemma 3]{KKP15}) that, for a universal constant $C$,
\begin{equation} \label{e.vkWspU uniform}
\left \| v_k \right\|_{W^{s,p}(U)}  \leq  C \left\| v \right\|_{W^{s,p}(U)}  + C \left( \int_{\R^n \setminus D} \frac{(|h(x)|+|g(x)|)^{p-1}}{(1 + |x|)^{n+sp}} \dx\right)^{\frac1{p-1}}.
\end{equation}
Therefore the sequence $\{v_k\}$ is uniformly bounded in $W^{s,p}(U)$, and the precompactness of $W^{s,p}(U)$, as shown for instance in~\cite[Theorem 7.1]{DPV12}, guarantees that there is a subsequence $\{v_{k_j}\}_j$ converging almost everywhere to $\widetilde v$ as $j \to \infty$. By Corollary~\ref{harnack conv 0} the convergence is pointwise in $D$ and $\widetilde v$ is $(s,p)$-harmonic in $D$. We will show that actually $\widetilde v = v$ in $D$. Since every subsequence of $\{v_k\}$ has such a subsequence, we have that $\lim_{k \to \infty} v_k = v$ pointwise in $D$.  

To see that $v=\widetilde v$ in $D$, we test the weak formulation with $\eta_k :=(v-v_k)\chi_{U} \in W_0^{s,p}(D)$, relabeling the subsequence. Notice that $\eta_k$ is a feasible test function since $v,v_k \in W^{s,p}(U)$ and $v_k=v$ in $U \setminus D$. The weak formulation for $v$ and $v_k$ gives
\begin{align*}
0 &= \int_{\R^n}\int_{\R^n}\big(L(v(x),v(y))-L(v_k(x),v_k(y))\big)\big(\eta(x)-\eta(y)\big)K(x,y)\dxy \nonumber \\[1ex]
&= \int_{U}\int_{U}\big(L(v(x),v(y))-L(v_k(x),v_k(y))\big) \nonumber \\[1ex]
&\qquad\qquad \times \big(v(x)-v(y)-v_k(x)+v_k(y)\big)K(x,y)\dxy \nonumber \\
&\quad + 2 \int_{\R^n \setminus U}\int_{U}\big(L(v(x),v(y))-L(v_k(x),v_k(y))\big)  \big(v(x)-v_k(x)\big)K(x,y)\dxy \nonumber \\[1ex]
&=: I_{1,k} + 2I_{2,k}.  \nonumber 
\end{align*}
We claim that $\lim_{k \to \infty} I_{2,k} = 0$. Indeed, noticing that since $v_k(x) = v(x) $ for $x \in U\setminus D$, we may rewrite 
\begin{equation*} 
I_{2,k} = \int_{\R^n \setminus U}\int_{D}\big(L(v(x),v(y))-L(v_k(x),v_k(y))\big)  \big(v(x)-v_k(x)\big)K(x,y)\dxy.
\end{equation*}
The involved measure  $K(x,y)\dxy$ is finite on $D \times \R^n \setminus U$ and thus we have by the dominated convergence theorem, using the uniform bounds $h \leq u_k \leq g$, the estimate in~\eqref{e.vkWspU uniform}, 
and the fact that $\widetilde v = v$ almost everywhere on $\R^n \setminus D$, that 
\begin{equation*}  
\lim_{k \to \infty} I_{2,k} = \int_{\R^n \setminus U}\int_{D}\big(L(v(x),v(y))-L(\widetilde v(x),v (y))\big)  \big(v(x)-\widetilde v(x)\big)K(x,y)\dxy.
\end{equation*}
Therefore $\lim_{k \to \infty} I_{2,k} \geq 0$ by the monotonicity of $t \mapsto L(t,v(y))$. Thus, Fatou's Lemma implies that 
\begin{align*}  
0  \geq \liminf_{k\to \infty} I_{1,k}
& \geq \int_{U}\int_{U}\big(L(v(x),v(y))-L(\widetilde v(x),\widetilde v(y))\big) \nonumber \\
&\qquad\qquad \times \big(v(x)-v(y)-\widetilde v(x)+\widetilde v(y)\big)K(x,y)\dxy \nonumber,
\end{align*}
proving by the monotonicity of $L$ that $\widetilde v = v$ almost everywhere. This finishes the proof. 
\end{proof}

We also get a fundamental convergence result for increasing sequences of $(s,p)$-harmonic functions, improving Corollary~\ref{harnack conv 0}.

\begin{theorem}[{\bf Harnack's convergence theorem}] \label{harnack conv}
Let $\{u_k\}$ be an increasing sequence of $(s,p)$-harmonic functions in $\Omega$ converging pointwise to a function $u$ as $k \to \infty$.
Then either $u\equiv + \infty$ in $\Omega$ or $u$ is $(s,p)$-harmonic in~$\Omega$. 
\end{theorem}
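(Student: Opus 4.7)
My plan is to combine the dichotomy of Lemma~\ref{l.increasing} with the convergence statement for continuous weak solutions in Corollary~\ref{harnack conv 0}. The subtle point to be aware of is that even though $\{u_k\}$ is an \emph{increasing} sequence of $(s,p)$-superharmonic functions (so that Lemma~\ref{l.increasing} directly applies to extract the superharmonicity of $u$), the sequence $\{-u_k\}$ is \emph{decreasing}, which means Lemma~\ref{l.increasing} cannot be applied symmetrically to extract subharmonicity of $u$. The way around this asymmetry is to use that each $u_k$ is a continuous weak solution in $\Omega$ by Corollary~\ref{cor_harmharm}, and then appeal to Corollary~\ref{harnack conv 0}, which does not require monotonicity but only uniform two-sided bounds in the tail space $L_{sp}^{p-1}(\R^n)$.

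To carry this out, I would first note that each $u_k$ is in particular $(s,p)$-superharmonic in $\Omega$, so Lemma~\ref{l.increasing} applied to the increasing sequence $\{u_k\}$ yields the dichotomy: either $u \equiv +\infty$ in $\Omega$, giving one of the conclusions; or $u$ is $(s,p)$-superharmonic in $\Omega$. Focusing on the second alternative, Theorem~\ref{t.summability} applied on any ball $B_{2r}(x_0) \Subset \Omega$ provides that $u \in L_{sp}^{p-1}(\R^n)$ (it is here that we crucially upgrade from $u_- \in L_{sp}^{p-1}(\R^n)$, which is automatic from superharmonicity, to control of the positive part as well). Similarly, since $u_1$ is $(s,p)$-harmonic in $\Omega$ we have both $(u_1)_+$ and $(u_1)_-$ in $L_{sp}^{p-1}(\R^n)$, hence $u_1 \in L_{sp}^{p-1}(\R^n)$. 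Monotonicity of the sequence then furnishes the pointwise two-sided bound $u_1 \leq u_k \leq u$ throughout $\R^n$, with both extremes in $L_{sp}^{p-1}(\R^n)$.

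With these ingredients in place, Corollary~\ref{harnack conv 0} applies to the sequence of continuous weak solutions $\{u_k\}$ with the choice $h := u_1$, $g := u$; the pointwise limit exists everywhere in $\R^n$ by monotonicity. The corollary yields that $u$ is a continuous weak solution in $\Omega$, and a final application of Corollary~\ref{cor_harmharm} upgrades this to $u$ being $(s,p)$-harmonic in $\Omega$. Beyond the asymmetry between increasing and decreasing sequences noted in the first paragraph, I do not expect any further obstacle: once Lemma~\ref{l.increasing}, Theorem~\ref{t.summability} and Corollary~\ref{harnack conv 0} are available, the proof reduces to carefully assembling them.
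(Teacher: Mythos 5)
Your proposal is correct and follows essentially the same route as the paper: Lemma~\ref{l.increasing} for the dichotomy, Theorem~\ref{t.summability} to place $u$ in $L_{sp}^{p-1}(\R^n)$, and then Corollary~\ref{harnack conv 0} together with Corollary~\ref{cor_harmharm} to conclude $(s,p)$-harmonicity. Your explicit choice of the bounding functions $h = u_1$ and $g = u$ simply spells out a detail the paper leaves implicit.
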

\begin{proof}
By Lemma~\ref{l.increasing} either $u \equiv + \infty$ or $u$ is $(s,p)$-superharmonic in $\Omega$. In the latter case, Theorem~\ref{t.summability} implies that $u \in L_{sp}^{p-1}(\R^n)$, and thus by Corollary~\ref{harnack conv 0} together with Corollary \ref{cor_harmharm}, $u$ is $(s,p)$-harmonic in $\Omega$. 
\end{proof}

\vs\subsection{Unbounded comparison} 
In Definition~\ref{def_superharmonic}(iii) we demanded that the comparison functions are globally bounded from above. A reasonable question is then that how would the definition change if one removes this assumption. In other words, if the solution is allowed to have too wild nonlocal contributions, would this be able to break the comparison? The answer is negative.  Indeed, the next lemma tells that one can remove the boundedness assumption $ v_+ \in L^\infty(\R^n)$ in the definition of $(s,p)$-superharmonic functions and still get the same class of functions. This is Theorem~\ref{thm:superharmonic}(iii). 

\begin{lemma} \label{l.(iii) vs (iii')}
Let $u$ be an $(s,p)$-superharmonic function in $\Omega$. Then it satisfies the following unbounded comparison statement: 
\begin{itemize}
\item[(iii')] $u$ satisfies the comparison in $\Omega$ against solutions, that is, if $D \Subset \Omega$ is an open set and $v \in C(\overline{D})$ is a weak solution in $D$ such that $u \geq v$ on $\partial D$ and almost everywhere on~$\R^n \setminus D$, then $u \geq v$ in $D$.
\end{itemize}
\end{lemma}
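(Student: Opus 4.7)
The plan is to reduce statement (iii$'$) to the bounded comparison in Definition~\ref{def_superharmonic}(iii) by approximating $v$ by a sequence of continuous weak solutions that are globally bounded above. The only obstruction to invoking (iii) directly is the missing $L^\infty$-bound on $v_+$ outside $D$; on $\overline{D}$ the continuity of $v$ already gives boundedness.

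Fix $\varepsilon>0$. Using the lower semicontinuity of $u$, the continuity of $v$ on $\overline{D}$, and the pointwise inequality $u\geq v$ on $\partial D$, I would first observe that the set
\[
K_\varepsilon := \big\{x\in\overline{D}\,:\,u(x)\leq v(x)-\varepsilon\big\}
\]
is closed in $\overline{D}$ and disjoint from $\partial D$, hence a compact subset of $D$. I would then pick an open $D_\varepsilon$ with $K_\varepsilon\subset D_\varepsilon\Subset D$ whose complement $\R^n\setminus D_\varepsilon$ satisfies the measure density condition~\eqref{eq:dens cond}; such a choice is standard in view of the remark preceding Theorem~\ref{thm:cont up to bdry}. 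By construction $u\geq v-\varepsilon$ on $D\setminus D_\varepsilon$, and in particular on $\partial D_\varepsilon$. Now I would apply Lemma~\ref{l.stability} with the lemma's $\Omega$ set equal to $D$, its $D$ set equal to $D_\varepsilon$, and the sequence $u_k:=\min\{v,k\}$ for $k\geq 1$; the uniform bounds $\min\{v,1\}\leq u_k\leq v$ hold with both envelopes in $L^{p-1}_{sp}(\R^n)$ thanks to Lemma~\ref{l.tail in control}. This produces continuous weak solutions $v_k\in C(\overline{D_\varepsilon})$ in $D_\varepsilon$ such that $v_k=v$ on $D\setminus D_\varepsilon$, $v_k=\min\{v,k\}$ on $\R^n\setminus D$, and $v_k\to v$ everywhere in $D_\varepsilon$. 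A comparison of $v_k$ inside $D_\varepsilon$ against the constant weak supersolution $\max\{k,\sup_{\overline{D}}v\}$ shows $(v_k)_+\in L^\infty(\R^n)$, so the translate $v_k-\varepsilon$ is admissible as a comparison function in (iii).

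The verification of (iii) with domain $D_\varepsilon$ and comparison function $v_k-\varepsilon$ then proceeds region by region: on $\R^n\setminus D$ one has $v_k-\varepsilon\leq v\leq u$ almost everywhere; on $D\setminus D_\varepsilon$, $v_k=v$ by Lemma~\ref{l.stability} and $u\geq v-\varepsilon$ by the choice of $D_\varepsilon$; on $\partial D_\varepsilon\subset D\setminus D_\varepsilon$, continuity of $v_k$ on $\overline{D_\varepsilon}$ combined with $v_k=v$ on the adjacent annular region forces $v_k=v$ there, so once more $u\geq v_k-\varepsilon$. Invoking (iii) then yields $u\geq v_k-\varepsilon$ in $D_\varepsilon$; sending $k\to\infty$ via the pointwise convergence supplied by Lemma~\ref{l.stability} gives $u\geq v-\varepsilon$ in $D_\varepsilon$, and combined with $u\geq v-\varepsilon$ on $D\setminus D_\varepsilon$ this yields $u\geq v-\varepsilon$ throughout $D$. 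Letting $\varepsilon\to 0$ completes the proof.

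The main subtlety I expect is that one cannot simply replace $v$ by its truncation $\min\{v,k\}$ while preserving the weak-solution property: decreasing $v$ outside $D$ turns it into a weak supersolution rather than a solution, so direct comparison against the $(s,p)$-superharmonic $u$ collapses. Lemma~\ref{l.stability} is precisely the device that supplies \emph{bona fide} bounded continuous weak solutions in the inner domain $D_\varepsilon$ while matching $v$ on the collar $D\setminus D_\varepsilon$, thereby making (iii) applicable and bridging the gap caused by the possible unboundedness of $v_+$ outside $D$.
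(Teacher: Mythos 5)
Your proof is correct and follows essentially the same route as the paper's: isolate the compact set $K_\varepsilon=\{u\le v-\varepsilon\}\cap\overline D$, pass to a regular intermediate subdomain, use truncations of $v$ together with Lemma~\ref{l.stability} to manufacture globally bounded continuous weak solutions agreeing with $v$ on the collar, and then invoke the bounded comparison of Definition~\ref{def_superharmonic}(iii) before letting $k\to\infty$ and $\varepsilon\to 0$. The only deviations are cosmetic and harmless: you subtract $\varepsilon$ after the stability lemma rather than truncating $v-\varepsilon$, you use a single regular subdomain $D_\varepsilon$ instead of the paper's nested pair $D_1\Subset D_2$, and you pass to the pointwise limit in $k$ (applying (iii) for every $k$) instead of fixing one large $k$ via uniform convergence on a compact subset.
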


\begin{proof}
Let $u$ be an $(s,p)$-superharmonic function in $\Omega$. We will show that then it also satisfies (iii'). To this end, take $D \Subset \Omega$ and $v$ as in (iii'). 
Let $\eps>0$. Due to lower semicontinuity of $u-v$ and the boundary condition, the set $K_\eps := \{ u \leq v - \eps\} \cap D$ is a compact set of $D$. Therefore we find open sets $D_1,D_2$ such that $K_\eps \subset D_1 \Subset D_2 \Subset D$ and $\R^n \setminus D_2$ satisfies the measure density condition \eqref{eq:dens cond}. Truncate $v$ as $u_k := \min\{v - \eps,k\}$. Applying Lemma~\ref{l.stability} 
(with $\Omega \equiv D$ and $D \equiv D_2$) we find a sequence of continuous weak solutions $\{v_k\}$ in~$ D_2$ such that $v_k \to v-\eps$ in~$D_2$. The convergence is uniform in~$\overline{D}_1$. Therefore, there is large enough $k$ such that $|v_k - v| \leq 2 \eps$ on~$\overline{D}_1$. Moreover, by the comparison principle (Lemma \ref{comp principle}), $v_k \leq v$ in $\R^n$. Since $u > v_k - \eps$ on $\partial D_1$ and almost everywhere in $\R^n \setminus D_1$ by the definition of $K_\eps$, we have by Definition~\ref{def_superharmonic}(iii) that $u \geq v_k - \eps \geq v - 3 \eps$ in $D_1$, and thus we also have that 
$u \geq v - 3 \eps$ in the whole $D$, because in $D \setminus K_\eps$ we have $u > v - \eps$. Since this holds for an arbitrary positive $\eps$, we have that (iii') holds, completing the proof.
\end{proof}

We conclude the section by a more general version of the comparison principle. 
\begin{theorem}[{\bf Comparison principle}] \label{thm:comparison}
Let $u$ be $(s,p)$-superharmonic in $\Omega$ and let $v$ be $(s,p)$-subharmonic in $\Omega$.
If $u \geq v$ almost everywhere in $\R^n \setminus \Omega$ and
\[
\liminf_{\Omega \ni y \to x} u(y) \geq \limsup_{\Omega \ni y \to x} v(y) \qquad \text{for all } x \in \partial\Omega
\]
such that both sides are not simultaneously $+\infty$ or $-\infty$, then $u \geq v$ in $\Omega$.
\end{theorem}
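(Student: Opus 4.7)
The plan is to establish, for every $\eps > 0$, the inequality $u \geq v - 3\eps$ in $\Omega$, and then let $\eps \to 0$. After intersecting with a large ball if needed, I may assume $\Omega$ is bounded. Fix $\eps > 0$ and introduce the ``bad set''
\[
K_\eps := \{u - v \leq -\eps\} \cap \Omega.
\]
Since $u$ is lower semicontinuous and $-v$ is lower semicontinuous (because $-v$ is $(s,p)$-superharmonic), the difference $u - v$ is lower semicontinuous, so $K_\eps$ is relatively closed in $\Omega$. The boundary hypothesis forces $K_\eps \Subset \Omega$: a sequence $y_n \in K_\eps$ converging to $x_0 \in \partial \Omega$ would produce $\limsup v(y_n) \geq \liminf u(y_n) + \eps$, which contradicts $\liminf u \geq \limsup v$ at $x_0$ in every admissible case (the simultaneous infinities are precisely those excluded by hypothesis).

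Next I would select open sets $D_1 \Subset D_2 \Subset \Omega$ with $K_\eps \Subset D_1$ and $\R^n \setminus D_1$ satisfying the measure density condition \eqref{eq:dens cond}. Applying Lemma~\ref{lem_approximation2} to the $(s,p)$-superharmonic function $-v$ and negating, I obtain a decreasing sequence $\{v_k\}$ of continuous (on $\overline{D_2}$) weak subsolutions in $D_2$ with $v_k \geq v$ and $v_k \searrow v$ pointwise on $\R^n$. A Dini-type argument applied to the lower semicontinuous non-negative differences $v_k - v$ (whose super-level sets are closed and nested with empty intersection) gives $v_k \to v$ uniformly on the compact set $\overline{D_2}$. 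For each $k$, I then invoke Theorem~\ref{obst prob sol} with trivial obstacle $h \equiv -\infty$ and boundary datum $v_k$ to produce a weak solution $w_k$ in $D_1$ with $w_k = v_k$ on $\R^n \setminus D_1$; continuity of $w_k$ on $\overline{D_1}$ follows from Theorem~\ref{thm:cont up to bdry}. Because $v_k$ itself is a weak subsolution and $w_k = v_k$ outside $D_1$, the comparison principle (Lemma~\ref{comp principle}) gives $w_k \geq v_k \geq v$ on $D_1$.

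The core step is to apply the unbounded comparison Lemma~\ref{l.(iii) vs (iii')} to $u$ (superharmonic) and the continuous weak solution $w_k - 2\eps$ on $D_1$, which requires $u \geq w_k - 2\eps$ on $\partial D_1$ and almost everywhere on $\R^n \setminus D_1$, for $k$ large. On the compact set $\partial D_1 \subset \overline{D_2} \setminus K_\eps$ one has $v \leq u + \eps$ pointwise, and the uniform bound $v_k \leq v + \eps$ on $\overline{D_2}$ yields $w_k = v_k \leq u + 2\eps$ there. On $\R^n \setminus D_1$ one combines $u \geq v$ almost everywhere on $\R^n \setminus \Omega$, the pointwise inequality $u > v - \eps$ on $\Omega \setminus D_1 \subset \Omega \setminus K_\eps$, and the explicit structure of the approximants from Lemma~\ref{lem_approximation2} (namely $v_k = \max\{-k, v\}$ outside a fixed compact region containing $D_2$), to obtain the desired a.e.\ inequality. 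Lemma~\ref{l.(iii) vs (iii')} then delivers $u \geq w_k - 2\eps \geq v - 2\eps$ in $D_1$; combining with $u > v - \eps$ on $\Omega \setminus D_1$ gives $u \geq v - 3\eps$ throughout $\Omega$, and $\eps \to 0$ completes the argument.

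The main technical obstacle is the verification of $u \geq w_k - 2\eps = v_k - 2\eps$ almost everywhere on the unbounded exterior $\R^n \setminus D_1$: the Dini-type uniform convergence is confined to compact subsets of $\overline{D_2}$, and handling the outer region requires careful exploitation of the explicit form of the approximating sequence from Lemma~\ref{lem_approximation2} together with the tail-space integrability $u_-, v_+ \in L^{p-1}_{sp}(\R^n)$ provided by Theorem~\ref{t.summability}, so that dominated-convergence-type arguments can control the nonlocal long-range contributions appearing in the admissibility hypotheses of the unbounded comparison.
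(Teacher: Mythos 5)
Your plan founders at precisely the step you flag as ``the main technical obstacle'': the admissibility hypothesis of Lemma~\ref{l.(iii) vs (iii')}, namely $u \geq w_k-2\eps$ almost everywhere on $\R^n\setminus D_1$, is in general \emph{false} for your choice of $w_k$, and no tail-integrability or dominated-convergence argument can substitute for it, since that lemma requires a pointwise a.e.\ ordering of the exterior data, not integrated smallness of long-range terms. Indeed, outside the intermediate set used in Lemma~\ref{lem_approximation2} your approximants are $v_k=\max\{v,-k\}$, i.e.\ $v$ truncated \emph{from below}; at points where $v<-k$ you would need $u\geq -k-2\eps$, and the only information available there is $u\geq v$ (a.e.\ on $\R^n\setminus\Omega$) or $u>v-\eps$ (in $\Omega\setminus D_1$), which gives no lower bound on $u$ relative to the truncation level. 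For example $u=v$ on $\R^n\setminus\Omega$ with $u,v\to-\infty$ at infinity is perfectly compatible with $u_-,\,v_+\in L^{p-1}_{sp}(\R^n)$ and all hypotheses, and then $u<v_k-2\eps$ on a set of positive (even infinite) measure for every $k$, so Lemma~\ref{l.(iii) vs (iii')} is never applicable to the pair $(u,\,w_k-2\eps)$. This is exactly why the paper runs the approximation on the \emph{other} side: it approximates $u$ from below by smooth $\psi_j\leq u$ (Lemma~\ref{lem_approximation}), solves the Dirichlet problem in $D$ with datum $g=\psi_j\chi_U+u\chi_{\R^n\setminus U}$, so the resulting solution $h$ satisfies $h\leq u$ everywhere on $\R^n\setminus D$ by construction and $h\geq v-2\eps$ a.e.\ there (a Dini-type argument on the compact set $\overline U\setminus D$, and $u\geq v-\eps$ a.e.\ farther out), and then applies Lemma~\ref{l.(iii) vs (iii')} twice, to $u$ against $h$ and to $-(v-2\eps)$ against $-h$. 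Your scheme would have to be reorganized along these lines; as written the gap is structural, not technical.

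A second, repairable error: the claimed uniform convergence $v_k\to v$ on $\overline{D_2}$ cannot hold, since a uniform limit of continuous functions is continuous while $v$ is merely upper semicontinuous (even the one-sided bound $v_k\leq v+\eps$ fails near an u.s.c.\ spike of $v$); moreover your semicontinuity bookkeeping is reversed, because $v_k-v$ is \emph{lower} semicontinuous and for l.s.c.\ functions it is the sub-level sets, not the super-level sets, that are closed, so the nested-compact-sets argument does not run. What you actually need on $\partial D_1$ (and on $\overline{D_2}\setminus D_1$), namely $v_k\leq u+2\eps$ for large $k$, can be obtained correctly by comparing $v_k$ with $u$ directly: $v_k-u$ is u.s.c.\ on $\Omega$, so the sets $\{v_k\geq u+2\eps\}\cap(\overline{D_2}\setminus D_1)$ are compact, nested, and have empty intersection because $u>v-\eps$ off $K_\eps$; this is the paper's Dini step with the roles of $u$ and $v$ mirrored, and it fixes the boundary part of your argument but not the exterior part above. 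Finally, the opening reduction ``intersect $\Omega$ with a large ball'' is not justified: on the new boundary portion $\Omega\cap\partial B_R$ you have neither the a.e.\ ordering nor the liminf/limsup hypothesis, and the compact containment of $K_\eps$ genuinely uses boundedness of $\Omega$ (as does the paper's own choice of the set $D$).
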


\begin{proof}
Suppose that $u$ and $v$ satisfy the assumptions of the theorem. Let $\eps>0$.
Then there exists an open set $D \Subset \Omega$ such that $u \geq v-\eps$ in $\Omega \setminus D$ by the boundary condition for $u$ and $v$. We may also assume that $\R^n \setminus D$ satisfies the measure density condition \eqref{eq:dens cond}.
Let $U$ be an open set such that $D \Subset U \Subset \Omega$, and let $\{\psi_j\}$, $\psi_j \in C^\infty(\overline U)$, be an increasing sequence converging pointwise to $u$ in $U$. Such a sequence exists according to Lemma \ref{lem_approximation}.
Then $\psi_j \geq v-2\eps$ in $\overline U \setminus D$ whenever $j$ is large enough by compactness of $\overline U \setminus D$ together with upper semicontinuity of $v-2\eps$.
For such $j$, let $g:=\psi_j\chi_U+u\chi_{\R^n \setminus U}$, which is in $W^{s,p}(U)$ by smoothness of $\psi_j$ and in $L^{p-1}_{sp}(\R^n)$ since $u \in L^{p-1}_{sp}(\R^n)$ by Theorem \ref{t.summability}. 
Letting now $h \in \mathcal K_{g,-\infty}(D,U)$ solve the related Dirichlet problem, $h \in C(\overline D)$ is a weak solution in $D$ by Corollary~\ref{obst prob free} and Theorem~\ref{thm:cont up to bdry}.
Since $u \geq h \geq v-2\eps$ in $\partial D$ and almost everywhere in $\R^n \setminus D$, we have according to Lemma \ref{l.(iii) vs (iii')} that $u \geq h \geq v-2\eps$ in $D$ as well. Also $u \geq v-\eps$ in $\Omega \setminus D$ by the choice of $D$ in the beginning, and consequently $u \geq v-2\eps$ in $\Omega$. The claim follows by letting $\eps \to 0$.
\end{proof}

\vs\section{The Perron method} \label{sec_perrons}
We now turn our focus on Dirichlet boundary value problems. Collecting some of the tools so far, it is rather straightforward to prove existence results outside of the natural energy classes. For instance, we record the following existence and regularity result, which often in practice turns out to be very useful. 

\begin{theorem} \label{thm:basic Dir}
Let $\Omega\Subset \Omega'$ be bounded open sets, and assume that $\R^n \setminus \Omega$ satisfies the measure density condition~\eqref{eq:dens cond}. Suppose that $g \in C(\Omega') \cap L_{sp}^{p-1}(\R^n)$. Then there is a weak solution in $\Omega$, which is continuous in $\Omega'$ and has boundary values $g$ on $\R^n \setminus \Omega$.  Such a solution is unique. 
\end{theorem}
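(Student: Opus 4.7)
The strategy is to approximate $g$ by boundary data in $W^{s,p}(\Omega') \cap L^{p-1}_{sp}(\R^n)$, solve the resulting Dirichlet problems via the obstacle machinery of Theorem~\ref{obst prob sol} with trivial obstacle $h \equiv -\infty$ (which removes the constraint), and pass to the limit using the uniform modulus of continuity up to the boundary provided by Theorem~\ref{thm:cont up to bdry} and Remark~\ref{remark:unifcont}. Uniqueness will be immediate from the comparison principle: two solutions $u_1, u_2$ as in the statement are continuous weak solutions, hence $(s,p)$-harmonic by Corollary~\ref{cor_harmharm}, they coincide with $g$ on $\R^n \setminus \Omega$, and their continuity in $\Omega'$ forces $\lim_{\Omega \ni y \to x} u_1(y) = g(x) = \lim_{\Omega \ni y \to x} u_2(y)$ for every $x \in \partial\Omega$; Theorem~\ref{thm:comparison} applied in both directions then yields $u_1 \equiv u_2$ in $\Omega$.

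For existence, I would first truncate by $g^k := \max\{-k,\min\{g,k\}\}$, which is bounded, continuous in $\Omega'$, lies in $L^{p-1}_{sp}(\R^n)$, and converges to $g$ uniformly on compact subsets of $\Omega'$. Next, fixing an intermediate open set $\Omega \Subset \Omega'' \Subset \Omega'$, I would construct via a mollifier together with a smooth cutoff near $\partial\Omega''$ a sequence $g^k_j$ with $|g^k_j| \leq k$, $g^k_j \in W^{s,p}(\Omega'') \cap C(\Omega') \cap L^{p-1}_{sp}(\R^n)$, and $g^k_j \to g^k$ uniformly on compact subsets of $\Omega'$ as $j \to \infty$. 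Theorem~\ref{obst prob sol} applied with $h \equiv -\infty$ then yields a weak solution $u^k_j \in \mathcal K_{g^k_j, -\infty}(\Omega, \Omega'')$ in $\Omega$, and Theorem~\ref{thm:cont up to bdry} makes $u^k_j$ continuous in $\Omega'$.

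The limits are then taken in two stages. For fixed $k$ and $j \to \infty$, Remark~\ref{remark:unifcont} supplies a uniform modulus of continuity for $\{u^k_j\}_j$ on compact subsets of $\Omega'$ inherited from the uniform equicontinuity of $\{g^k_j\}_j$, while Theorem~\ref{thm_local} together with the uniform tail bound $|g^k_j| \leq k$ gives uniform $L^\infty$-bounds on compact subsets of $\Omega$. Arzel\`a--Ascoli extracts a locally uniform limit $u^k$, and Corollary~\ref{harnack conv 0} identifies $u^k$ as a continuous weak solution in $\Omega$ with $u^k = g^k$ on $\R^n \setminus \Omega$. Passing $k \to \infty$ next, the modulus of continuity from Remark~\ref{remark:unifcont} is now controlled by that of $g$ itself (a fixed continuous function on compact subsets of $\Omega'$), and the tails $|u^k| = |g^k| \leq |g| \in L^{p-1}_{sp}(\R^n)$ outside $\Omega$ are uniformly bounded; one further application of Arzel\`a--Ascoli and Corollary~\ref{harnack conv 0} delivers the desired continuous weak solution $u$ in $\Omega$, continuous in $\Omega'$, with $u = g$ on $\R^n \setminus \Omega$.

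The main obstacle is threading a genuinely uniform modulus of continuity through both limits: one must verify that the modulus produced by Theorem~\ref{thm:cont up to bdry} (and captured in Remark~\ref{remark:unifcont}) depends only on the measure density constant of $\R^n \setminus \Omega$, on the structural constants, and on the modulus of continuity of the boundary data on compact subsets of $\Omega'$, so that it is dominated uniformly in $j$ and $k$ by the fixed modulus of $g$. Constructing the $g^k_j$ with uniform equicontinuity across the gluing region $\partial\Omega''$ is the one genuinely delicate technical ingredient, but is standard via mollification combined with a smooth cutoff.
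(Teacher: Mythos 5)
Your overall architecture (approximate the data, solve via the obstacle machinery with obstacle $-\infty$, use the uniform modulus from Theorem~\ref{thm:cont up to bdry}/Remark~\ref{remark:unifcont}, pass to the limit, and get uniqueness from comparison) is the same as the paper's, and your first limit $j\to\infty$ at fixed $k$ is sound: there $-k \le u^k_j \le k$ everywhere by comparison with constants, so Corollary~\ref{harnack conv 0} genuinely applies. The gap is in the second limit $k\to\infty$. Corollary~\ref{harnack conv 0} (equivalently Theorem~\ref{lemma:conv supersolution}/Corollary~\ref{lemma:conv solution}) requires two-sided domination $h \le u^k \le \tilde g$ by \emph{fixed} functions $h,\tilde g \in L^{p-1}_{sp}(\R^n)$ almost everywhere in \emph{all} of $\R^n$, together with uniform local boundedness in $\Omega$; you only verify $|u^k| = |g^k| \le |g|$ outside $\Omega$. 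Inside $\Omega$ nothing you quote gives a $k$-uniform bound: constants are no longer barriers, because $g$ need only be continuous near $\overline\Omega$ and may be unbounded far away, and the far-field tail of $g^k$ can push $u^k$ above $\sup_{\partial\Omega} g_+$ in the interior. The appeal to "the modulus of $g$" gives equicontinuity of $\{u^k\}$ on compacts of $\Omega'$ (truncation is $1$-Lipschitz, so the $g^k$ do share a modulus), but equicontinuity alone does not supply the pointwise bounds needed for Arzel\`a--Ascoli in $\Omega$ nor the $L^{p-1}_{sp}$-dominating functions needed to identify the limit as a weak solution; producing such a bound amounts to a global sup-estimate with tail up to $\partial\Omega$ (an absorption/covering argument in the spirit of Lemma~\ref{hM bound}), which you would have to carry out. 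A smaller wrinkle: the glued functions $g^k_j$ built with a cutoff near $\partial\Omega''$ coincide near $\partial\Omega''$ with $g^k$, which is merely continuous, so membership in $W^{s,p}(\Omega'')$ is not automatic; plain global mollification of the bounded function $g^k$ avoids this.

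The paper sidesteps the problematic second limit by approximating $g$ \emph{monotonically}: Lemma~\ref{lem_approximation} (inf-convolution) gives an increasing sequence of smooth $\psi_j \to g$, the corresponding solutions $u_j \in \mathcal K_{\psi_j,-\infty}(\Omega,\Omega') \cap C(\Omega')$ are increasing by comparison, and Harnack's convergence theorem (Theorem~\ref{harnack conv}) handles the increasing limit without any two-sided $L^{p-1}_{sp}$ domination; the uniform modulus of continuity only serves to rule out the alternative $u \equiv +\infty$ and to give continuity in $\Omega'$. If you restore monotonicity in your scheme (or prove the missing uniform interior bound), your argument goes through; your uniqueness argument via Theorem~\ref{thm:comparison} is fine and is essentially the paper's.
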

\begin{proof}
By Lemma~\ref{lem_approximation} there is an increasing sequence $\{\psi_j\}$, $\psi_j \in C^\infty(\Omega') \cap L_{sp}^{p-1}(\R^n)$, such that $\psi_j \to g$ pointwise in $\R^n$ as $j \to \infty$. Solving the Dirichlet boundary value problem we find weak solutions $u_j \in \mathcal{K}_{\psi_j,-\infty}(\Omega,\Omega') \cap C(\Omega')$, $j = 1,2,\ldots,$ and the sequence is increasing. Theorem~\ref{thm:cont up to bdry} (see also Remark~\ref{remark:unifcont}) gives a uniform (in $j$) modulus of continuity for $u_j$'s on compact subsets of $\Omega'$. In particular, $u_j$ is uniformly bounded from above in $\Omega$ and hence Theorem~\ref{harnack conv} gives that $u_j$ converges to an $(s,p)$-harmonic function $u$ in $\Omega$.  Thus $u \in  C(\Omega')$ and it is a weak solution as in the statement. 

The uniqueness follows easily, since if $u_1,u_2$ are two solutions as in the statement, then $\{u_1 \geq u_2 + \eps\}$ is compact set of $\Omega$ for all $\eps>0$, and comparison then yields that $u_1 \leq u_2 + \eps$ in $\Omega$. Since this holds for arbitrarily small positive $\eps$, and we may interchange the roles of $u_1$ and $u_2$, we deduce that $u_1 \equiv u_2$. 
\end{proof}

However, our tools provide a much more general setup for Dirichlet problems, given by the  Perron method. Indeed, we conclude this paper by introducing a natural nonlocal counterpart of the celebrated Perron method in nonlinear Potential Theory, as mentioned in the introduction; recall Definition~\ref{perron sol} there.

\vs\subsection{Poisson modification}\label{sec_poisson}
We start by defining the nonlocal Poisson modification.

\begin{theorem} \label{thm:Poisson mod} 
Let $D \Subset \Omega$ be open sets such that $\R^n \setminus D$ satisfies the measure density condition~\eqref{eq:dens cond}. Let $u$ be $(s,p)$-superharmonic in $\Omega$. Then there is a continuous weak solution $w$ in $D$ such that the function $P_{u,D}$, defined as
\begin{equation*} 
P_{u,D}(x) :=
\begin{cases}
w(x), & x \in  D, \\
u(x), & x \in \R^{n} \setminus D,
\end{cases}
\end{equation*}
is an $(s,p)$-superharmonic function in $\Omega$ satisfying $P_{u,D} \leq u$ everywhere in $\R^n$. The function $P_{u,D}$ is called \,{\rm Poisson modification\!} of $u$ in $D$. 
\end{theorem}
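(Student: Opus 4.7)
My approach is to construct the interior weak solution $w$ as the pointwise increasing limit of Dirichlet solutions whose ``boundary data'' on $\R^n \setminus D$ approximate $u$ from below, and then to verify all four clauses of Definition~\ref{def_superharmonic} for the resulting function $P_{u,D}$.

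I would start by choosing an intermediate open set $U$ with $D \Subset U \Subset \Omega$ and applying Lemma~\ref{lem_approximation} to obtain an increasing sequence $\psi_j \in C^\infty(\overline U)$ with $\psi_j < u$ and $\psi_j \to u$ pointwise on $U$. Setting
\[
g_j := \psi_j \chi_U + \min\{u,j\}\chi_{\R^n \setminus U},
\]
each $g_j$ lies in $W^{s,p}(U) \cap L^{p-1}_{sp}(\R^n)$ because $u_- \in L^{p-1}_{sp}(\R^n)$, so Theorem~\ref{obst prob sol} yields the solution $w_j \in \mathcal K_{g_j,-\infty}(D,U)$ to the obstacle problem with no obstacle. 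By Corollary~\ref{obst prob free} the function $w_j$ is a weak solution in $D$, and by Theorem~\ref{thm:cont up to bdry} it is continuous on $U$, in particular on $\overline D$. The monotonicity $g_{j+1}\geq g_j$ together with Corollary~\ref{comp principle2} gives $w_{j+1}\geq w_j$ in $D$, while inserting the bounded continuous weak solution $w_j$ (extended by $g_j$ outside $D$) into Definition~\ref{def_superharmonic}(iii) for $u$ gives $w_j \leq u$ in $D$. Setting $w := \lim_j w_j$, the bound $w \leq u < \infty$ a.e.\ excludes the case $w\equiv +\infty$, so Corollary~\ref{harnack conv 0} together with Corollary~\ref{cor_harmharm} identifies $w$ as a continuous $(s,p)$-harmonic function in $D$, yielding the candidate $P_{u,D}$ with $P_{u,D} \leq u$ throughout $\R^n$.

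The verification that $P_{u,D}$ is $(s,p)$-superharmonic in $\Omega$ is mostly bookkeeping: clause (i) is immediate since $w$ is finite continuous in $D$ and $u$ satisfies it on $\Omega\setminus D$; clause (iv) follows from $(P_{u,D})_- \leq u_- + \|w_-\|_{L^\infty(\overline D)}\chi_{\overline D}$, which is in $L^{p-1}_{sp}(\R^n)$; and for lower semicontinuity (ii) the only nontrivial points are $x \in \partial D$, where for fixed $j$ the continuity of $w_j$ on $\overline D \subset U$ gives $\liminf_{y \to x,\, y\in D} w(y) \geq \lim_{y \to x}w_j(y) = \psi_j(x)$, and letting $j \to \infty$ produces $\liminf_{y \to x,\, y\in D} w(y) \geq u(x) = P_{u,D}(x)$, which combines with the l.s.c.\ of $u$ on the side $\R^n \setminus D$.

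The main obstacle is clause (iii). Given a comparison function $v \in C(\overline{D'})$ with $v_+ \in L^\infty(\R^n)$ on some $D' \Subset \Omega$ satisfying $P_{u,D} \geq v$ on $\partial D'$ and a.e.\ on $\R^n \setminus D'$, the bound $P_{u,D}\leq u$ promotes these hypotheses to $u \geq v$ on $\partial D'$ and a.e.\ on $\R^n \setminus D'$, so clause (iii) for $u$ gives $u \geq v$ throughout $D'$; this already settles the desired inequality on $D' \setminus D$, where $P_{u,D} = u$. On $E := D' \cap D$ both $w$ and $v$ are continuous weak solutions, so I would apply the comparison principle of Theorem~\ref{thm:comparison} on $E$ after checking that $w \geq v$ a.e.\ on $\R^n \setminus E$, splitting into $\R^n \setminus (D \cup D')$, $D \setminus D'$, and $D' \setminus D$ and feeding in the three inequalities already established, together with the boundary condition $\liminf_{E \ni y \to x} w(y) \geq \limsup_{E \ni y \to x} v(y)$ at every $x \in \partial E \subset \partial D \cup \partial D'$, which uses the continuity of $v$ on $\overline{D'}$ and, depending on whether $x \in D \cap \partial D'$ or $x \in \partial D \cap \overline{D'}$, either the pointwise inequality $w(x)\geq v(x)$ coming from the hypothesis on $\partial D'$ or the approximation chain $w \geq w_j \to \psi_j \to u \geq v$ valid up to $\partial D$. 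This compatibility check between the two boundaries $\partial D$ and $\partial D'$ is the only subtle point, and it uses nothing beyond the tools already developed.
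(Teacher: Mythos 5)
Your proposal is correct and follows essentially the paper's own route: you build $w$ as the increasing limit of Dirichlet solutions in $D$ whose exterior data approximate $u$ from below (effectively inlining the construction of Lemma~\ref{lem_approximation2} via Lemma~\ref{lem_approximation} and truncation), identify the limit as a continuous weak solution by the convergence results (Corollary~\ref{harnack conv 0}/Theorem~\ref{harnack conv} with Corollary~\ref{cor_harmharm}), get $P_{u,D}\leq u$ from Definition~\ref{def_superharmonic}(iii) for $u$, and verify clause (iii) for $P_{u,D}$ exactly as the paper does, by reducing to the comparison principle (Theorem~\ref{thm:comparison}) on $D'\cap D$ using $u\geq v$ in $D'$ and the boundary behavior of $w$ at $\partial D$. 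The only slip is the citation for the monotonicity $w_{j+1}\geq w_j$: Corollary~\ref{comp principle2} does not apply since the $w_j$ are solutions only in $D$, but Lemma~\ref{comp principle} (with $\Omega\equiv D$, $\Omega'\equiv U$, $w_j\in W^{s,p}(U)$) gives the same conclusion verbatim.
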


\begin{proof}
Let $U$ be an open set such that $D \Subset U \Subset \Omega$ and $\R^n \setminus U$ satisfies the measure density condition \eqref{eq:dens cond}. Then by Lemma \ref{lem_approximation2} there is an inreasing sequence $\{u_k\}$, $u_k \in W^{s,p}(U) \cap C(U)$,
such that $u_k = u$ outside of $U$ and $u_k$ converges to $u$ pointwise in $\R^n$. Further, we find weak solutions $w_k \in \mathcal{K}_{u_k,-\infty}(D,U) \cap C(U)$ in $D$ by Corollary \ref{obst prob free} and Theorem~\ref{thm:cont up to bdry}. Moreover, $\{w_k\}$ is an increasing sequence by the comparison principle, and we may define $w:=\lim_{k \to \infty} w_k$.
Since $w_k \leq u$ by the comparison property, also $w \leq u$.

In addition, $w$ is a continuous weak solution in $D$ by Theorem \ref{harnack conv} together with Corollary \ref{cor_harmharm}, and lower semicontinuous in $U$ as a limit of an increasing sequence of continuous functions, and thus also in $\Omega$. Furthermore, according to its definition, $w=u$ everywhere on $\R^n \setminus D$. 
It is then clear that it satisfies (i--ii) and (iv) of Definition~\ref{def_superharmonic}.

We next check that $w$ satisfies also Definition~\ref{def_superharmonic}(iii). Let $E \Subset \Omega$ be open and let $v \in C(\overline E)$ be a weak solution in $E$ bounded from above such that $v \leq w$ on $\partial E$ and almost everywhere on $\R^n \setminus E$. Since $w\leq u$ in $\R^n$, we have that $v \leq u$ on $\partial E$ and almost everywhere on $\R^n \setminus E$, and thus by Definition~\ref{def_superharmonic}(iii) that $v \leq u$ in $E$ as well. This implies that, since $w = u$ on $\partial D$, we have that $v \leq w$ on  $\partial (E\cap D)$ and almost everywhere on $\R^n \setminus (E \cap D)$.
Now, for any $x \in \partial (E \cap D)$ we have
\[
\liminf_{E \cap D \ni y \to x}w(y) \geq w(x) \geq v(x) = \limsup_{E \cap D \ni y \to x} v(y)
\]
by the lower semicontinuity of $w$ and continuity of $v$ up to the boundary. 
This shows by the comparion principle, Theorem \ref{thm:comparison}, that $w$ satisfies Definition~\ref{def_superharmonic}(iii), and hence $w$ is $(s,p)$-superharmonic in $\Omega$. This finishes the proof since $P_{u,D} \equiv w$.
\end{proof}

The next two lemmas show that there is a natural ordering for the Poisson modifications. 

\begin{lemma} \label{poisson monotone} 
Let $D \Subset \Omega$ be open sets such that $\R^n \setminus D$ satisfies the measure density condition~\eqref{eq:dens cond}. 
Let $u$ and $v$ be $(s,p)$-superharmonic functions in $\Omega$ such that $u \leq v$. Then $P_{u,D} \leq P_{v,D}$ in $\Omega$.
In particular, the Poisson modification of $u$ in $D$ is unique.
\end{lemma}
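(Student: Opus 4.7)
My plan is to reduce everything to the comparison property built into the definition of $(s,p)$-superharmonicity, applied to $P_{v,D}$ against the approximating continuous weak solutions that already appear in the construction of $P_{u,D}$.

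Outside $D$ the inequality is trivial: by the very definition of the Poisson modification, $P_{u,D}(x)=u(x)\le v(x)=P_{v,D}(x)$ for every $x\in\R^n\setminus D$, so only the interior inequality has to be established. To obtain it, I will appeal to the construction carried out in the proof of Theorem~\ref{thm:Poisson mod}: there exist an open set $U$ with $D\Subset U\Subset\Omega$, an increasing sequence $\{u_k\}$ of continuous functions on $U$ with $u_k\le u$ and $u_k\to u$ pointwise, bounded from above off $U$ by the truncation level, and an increasing sequence $\{w_k\}$ of weak solutions of~\eqref{problema} in $D$ with $w_k\in C(U)$, $w_k=u_k$ on $\R^n\setminus D$, and $w_k\nearrow P_{u,D}$ pointwise in $\R^n$. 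From these properties I read off that each $w_k$ is bounded from above on the whole $\R^n$, so $(w_k)_+\in L^\infty(\R^n)$; moreover, the continuity of $w_k$ on the neighbourhood $U\supset\partial D$ together with the outer identity $w_k=u_k$ on $U\setminus D$ forces $w_k(x)=u_k(x)$ also for every $x\in\partial D$.

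I then apply Definition~\ref{def_superharmonic}(iii) to the $(s,p)$-superharmonic function $P_{v,D}$ with comparison function $w_k$: on $\partial D$ and almost everywhere on $\R^n\setminus D$ one has the chain of inequalities $w_k=u_k\le u\le v=P_{v,D}$, so the hypotheses of (iii) are satisfied and I conclude $P_{v,D}\ge w_k$ in $D$. Letting $k\to\infty$ yields $P_{v,D}\ge P_{u,D}$ throughout $D$, completing the main inequality. The uniqueness statement then drops out for free by applying the monotonicity just established with $u$ in place of both $u$ and $v$: two Poisson modifications of $u$ in $D$ simultaneously majorize one another, hence coincide.

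I anticipate that the one genuinely delicate point in implementing this plan is the pointwise boundary identification $w_k=u_k$ on $\partial D$; this, however, is an immediate consequence of the continuity of $w_k$ on the full neighbourhood $U$ of $\overline{D}$ guaranteed by Theorem~\ref{thm:cont up to bdry}, combined with the outer identity $w_k\equiv u_k$ on $U\setminus D$ built into the obstacle-type construction.
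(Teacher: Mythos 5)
Your overall strategy is the paper's own: outside $D$ the inequality is just the hypothesis $u\le v$, and inside $D$ you compare the $(s,p)$-superharmonic function $P_{v,D}$ against the increasing sequence $\{w_k\}$ of continuous weak solutions in $D$ coming from the construction of $P_{u,D}$, then let $k\to\infty$; uniqueness follows by running the monotonicity both ways. The skeleton is sound, and the boundary identification $w_k=u_k$ on $\partial D$ that you single out is indeed fine (note only that the identity on $U\setminus D$ is a.e., so passing to $\partial D$ uses continuity in $U$ together with the fact that points of $\partial D$ are approached from $\R^n\setminus\overline D$, which is where the measure density condition~\eqref{eq:dens cond} helps).

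The genuinely weak point is the comparison tool. You apply Definition~\ref{def_superharmonic}(iii), which requires $(w_k)_+\in L^\infty(\R^n)$, and you justify this by ``reading off'' from the construction that $u_k$ is bounded above off $U$ by a truncation level. That property is not available from the statements you can cite: Theorem~\ref{thm:Poisson mod} asserts no upper bound on $w_k$, and its proof even takes $u_k=u$ outside $U$, in which case $w_k=u_k$ is in general unbounded from above, since an $(s,p)$-superharmonic function need not be bounded above outside $U$ (nor outside $\Omega$). The truncation you rely on appears only inside the proof of Lemma~\ref{lem_approximation2}, where the obstacle is $\min\{j,u\}$ far out and a smooth $\psi_j$ on an intermediate annulus; so your argument can be salvaged by re-running that construction and tracking the bounds explicitly. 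The cleaner and more robust fix --- and what the paper actually does --- is to invoke the unbounded comparison statement, Lemma~\ref{l.(iii) vs (iii')} (i.e.\ Theorem~\ref{thm:superharmonic}(iii)), for $P_{v,D}$ against $w_k$: it only needs $P_{v,D}\ge w_k$ on $\partial D$ and almost everywhere on $\R^n\setminus D$, which you have from $w_k=u_k\le u\le v$, and it makes any global upper bound on $w_k$ irrelevant. With that single substitution your proof coincides with the paper's.
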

\begin{proof}
By the proof of Theorem~\ref{thm:Poisson mod}, there is an increasing sequence $\{w_k\}$ converging pointwise to $P_{u,D}$ such that $w_k \in C(\overline D)$ is a weak solution in $D$. Since $P_{v,D} \geq P_{u,D} \geq w_k$ in $\R^n \setminus D$ and $P_{v,D}$ is $(s,p)$-superharmonic in $\Omega$ by Theorem~\ref{thm:Poisson mod}, Lemma \ref{l.(iii) vs (iii')} yields $P_{v,D} \geq w_k$ in $D$. Letting $k \to \infty$ finishes the proof.
\end{proof}

\begin{lemma} \label{poisson monotone set}
Let $D \Subset U \Subset \Omega$ be open sets such that both $\R^n \setminus D$ and $\R^n \setminus U$ satisfy the measure density condition~\eqref{eq:dens cond}. Let $u$ be $(s,p)$-superharmonic in~$\Omega$. Then $P_{u,D} \geq P_{u,U}$ in $\Omega$.
\end{lemma}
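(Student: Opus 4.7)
The plan is to reduce the claim on $D$ to the unbounded comparison principle of Lemma~\ref{l.(iii) vs (iii')}, applied with the $(s,p)$-superharmonic function $P_{u,D}$ on the open set $D\Subset\Omega$ and with $P_{u,U}$ as the competing continuous weak solution; outside $D$ the inequality will be established by direct inspection.

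First I would handle the easy exterior comparison. By Theorem~\ref{thm:Poisson mod} applied to the pair $(u,D)$ we have $P_{u,D}=u$ on $\R^n\setminus D$, and applied to the pair $(u,U)$ we have $P_{u,U}\le u$ everywhere with $P_{u,U}=u$ on $\R^n\setminus U$. Hence $P_{u,D}=P_{u,U}$ on $\R^n\setminus U$, while $P_{u,D}=u\ge P_{u,U}$ on $U\setminus D$. Because $D\Subset U$ forces $\partial D\subset U\setminus D$, this already gives $P_{u,D}\ge P_{u,U}$ pointwise on $\partial D$ and (in fact everywhere) on $\R^n\setminus D$.

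Next I would set up the comparison on $D$ itself. By Theorem~\ref{thm:Poisson mod}, $P_{u,U}$ is defined on $U$ as a continuous weak solution; since $\overline D\subset U$, this yields $P_{u,U}\in C(\overline D)$ and that $P_{u,U}$ remains a weak solution on the smaller open set $D$. Since $P_{u,D}$ is $(s,p)$-superharmonic in $\Omega$ (again by Theorem~\ref{thm:Poisson mod}) and $D\Subset\Omega$, Lemma~\ref{l.(iii) vs (iii')} will apply with the roles $u\leftrightarrow P_{u,D}$, $v\leftrightarrow P_{u,U}$, together with the boundary and exterior ordering from the previous paragraph, yielding $P_{u,D}\ge P_{u,U}$ in $D$. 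Combining this with the exterior inequalities finishes the proof in all of $\Omega$ (in fact in all of $\R^n$).

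The only point that will require a sentence of verification is that $P_{u,U}$ is an admissible competitor in Lemma~\ref{l.(iii) vs (iii')}, namely that it is continuous up to $\overline D$ and is a weak solution on $D$. Both facts are however immediate consequences of $\overline D\subset U$ together with the construction of $P_{u,U}$ in Theorem~\ref{thm:Poisson mod}; no fine quantitative estimate, tail bound, or approximation by obstacle problems will be needed here, because the nonlocal contributions of both Poisson modifications are already frozen to equal $u$ outside their respective modification sets.
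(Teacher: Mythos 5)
Your proof is correct and follows essentially the same route as the paper: both establish $P_{u,D}=u\ge P_{u,U}$ on $\R^n\setminus D$ from Theorem~\ref{thm:Poisson mod}, note that $P_{u,U}$ is a continuous weak solution in $U\supset\overline D$ while $P_{u,D}$ is $(s,p)$-superharmonic in $\Omega$, and then conclude on $D$ via the unbounded comparison of Lemma~\ref{l.(iii) vs (iii')}. Your extra verifications (continuity of $P_{u,U}$ on $\overline D$ and its being a weak solution in $D$) are exactly the implicit checks the paper relies on, so nothing is missing.
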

\begin{proof}
Since $P_{u,D}=u$ in $\R^n \setminus D$, we have $P_{u,D} \geq P_{u,U}$ in $\R^n \setminus D$ by Theorem~\ref{thm:Poisson mod}.
Moreover, according to Theorem~\ref{thm:Poisson mod} $P_{u,D}$ is $(s,p)$-superharmonic in $\Omega$ and $P_{u,U}$ is a continuous weak solution in $U \supset \overline D$. Thus, Lemma~\ref{l.(iii) vs (iii')} implies $P_{u,D} \geq P_{u,U}$ in $D$.
\end{proof}

\vs\subsection{Perron solutions}\label{sec_perron}

We conclude this paper by considering the Perron solutions we defined in Definition \ref{perron sol}.
The first property is that upper and lower Perron solutions are in order.

\begin{lemma} \label{perron order}
The Perron solutions $\overline H_g$ and $\underline H_g$ satisfy $\overline H_g \geq \underline H_g$ in $\R^n$.
\end{lemma}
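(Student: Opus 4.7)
The plan is to fix arbitrary $u\in\mathcal U_g$ and $v\in\mathcal L_g$ and show that $u\geq v$ pointwise in $\Omega$. Once this is achieved, taking the infimum over $u\in\mathcal U_g$ and the supremum over $v\in\mathcal L_g$ immediately yields $\overline H_g\geq\underline H_g$ in $\Omega$; outside $\Omega$ the inequality is trivial, since by Definition~\ref{perron sol}(iv) applied to both $u$ and $-v$ we have $u=g=v$ almost everywhere in $\R^n\setminus\Omega$.

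The main tool will be the Comparison Principle for $(s,p)$-super- and subharmonic functions, namely Theorem~\ref{thm:comparison}, so I first check its hypotheses. By Definition~\ref{perron sol}(i)--(ii), $u$ is $(s,p)$-superharmonic in $\Omega$ and bounded below there, while $v\in\mathcal L_g$ means $-v\in\mathcal U_{-g}$, so $v$ is $(s,p)$-subharmonic in $\Omega$ and bounded above there. From $u=g=v$ a.e.\ in $\R^n\setminus\Omega$ we get $u\geq v$ a.e.\ in $\R^n\setminus\Omega$, covering one of the assumptions of Theorem~\ref{thm:comparison}.

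Next I would translate the Perron boundary conditions. For every $x\in\partial\Omega$, Definition~\ref{perron sol}(iii) applied to $u\in\mathcal U_g$ gives
\[
\liminf_{\Omega\ni y\to x} u(y)\ \geq\ \esslimsup_{\R^n\setminus\Omega\ni y\to x} g(y),
\]
and applied to $-v\in\mathcal U_{-g}$ (and rewritten in terms of $v$) it gives
\[
\limsup_{\Omega\ni y\to x} v(y)\ \leq\ \essliminf_{\R^n\setminus\Omega\ni y\to x} g(y)\ \leq\ \esslimsup_{\R^n\setminus\Omega\ni y\to x} g(y).
\]
Chaining these two estimates yields the boundary inequality required by Theorem~\ref{thm:comparison}. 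The boundedness of $u$ from below and of $v$ from above in $\Omega$ (again Definition~\ref{perron sol}(ii)) rule out the degenerate case of simultaneous $+\infty$ or $-\infty$ on both sides: $\liminf u>-\infty$ excludes the both-$-\infty$ case, and $\limsup v<+\infty$ excludes the both-$+\infty$ case.

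With all hypotheses verified, Theorem~\ref{thm:comparison} yields $u\geq v$ pointwise in $\Omega$, completing the argument. The only genuinely delicate point is the bookkeeping when transferring Definition~\ref{perron sol}(iii) from $-v\in\mathcal U_{-g}$ back to a $\limsup$ statement for $v$, together with the observation that the essential limits of $g$ from outside $\Omega$ automatically satisfy $\essliminf\leq\esslimsup$; both are routine but must be written out cleanly in order to apply Theorem~\ref{thm:comparison} without loss.
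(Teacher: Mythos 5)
Your proposal is correct and follows essentially the same route as the paper: fix $u\in\mathcal U_g$, $v\in\mathcal L_g$, chain the boundary inequalities through $\esslimsup g$ and $\essliminf g$, use Definition~\ref{perron sol}(ii) to exclude the simultaneously infinite case, and invoke Theorem~\ref{thm:comparison} before passing to the infimum and supremum. The only cosmetic difference is that the paper dispatches the empty-class case explicitly at the start, which in your write-up is absorbed by the convention that the infimum (resp.\ supremum) over an empty class is $+\infty$ (resp.\ $-\infty$).
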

\begin{proof}
If $\mathcal U_g$ or $\mathcal L_g$ is empty, there is nothing to prove since $\overline H_g \equiv +\infty$ or $\underline H_g \equiv -\infty$, respectively.
Assume then that the classes are non-empty and take $u \in \mathcal U_g$ and $v \in \mathcal L_g$. Then
\[
\liminf_{\Omega \ni y \to x}u(y) \geq \esslimsup_{\R^n \setminus \Omega \ni y \to x}g(y) \geq \essliminf_{\R^n \setminus \Omega \ni y \to x}g(y) \geq \limsup_{\Omega \ni y \to x}v(y)
\]
for every $x \in \partial\Omega$ by Definition \ref{perron sol}(iii). Both sides of the inequality above cannot be simultaneously $-\infty$ or $+\infty$ according to Definition \ref{perron sol}(ii). Moreover, since $u=g=v$ almost everywhere in $\R^n \setminus \Omega$, we have $u \geq v$ in $\Omega$ by the comparison principle, Theorem \ref{thm:comparison}.
Finally, taking the infimum over $\{u \in \mathcal U_g\}$ and the supremum over $\{v \in \mathcal L_g\}$ finishes the proof.
\end{proof}

The second straightforward observation is that for bounded boundary values the Perron classes are non-empty.

\begin{lemma} \label{l.f bnd}
If $g \in L_{sp}^{p-1}(\R^n)$ is bounded from above, then the class $\mathcal U_g$ is nonempty.
\end{lemma}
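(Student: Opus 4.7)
The plan is to exhibit an explicit member of $\mathcal U_g$. Set $M:=\esssup_{\R^n}g$, which is finite by hypothesis, and define
\[
u(x):=
\begin{cases}
M, & x\in\Omega,\\
g(x), & x\in\R^n\setminus\Omega.
\end{cases}
\]
Conditions (ii), (iii), (iv) of Definition~\ref{perron sol} are immediate from this construction: $u\equiv M$ on $\Omega$ is bounded from below; $u=g$ almost everywhere on $\R^n\setminus\Omega$; and for every $x\in\partial\Omega$ one has $\liminf_{\Omega\ni y\to x}u(y)=M\ge\esslimsup_{\R^n\setminus\Omega\ni y\to x}g(y)$ because $g\le M$ almost everywhere. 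Only (i) requires work: I must show that $u$ is $(s,p)$-superharmonic in $\Omega$.

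The first sub-step is to verify that $u$ is a fractional weak supersolution in $\Omega$ in the sense of Definition~\ref{def_supersolution}. The local Sobolev condition $u\in W^{s,p}_{\rm loc}(\Omega)$ is trivial because $u$ is constant on $\Omega$, while $u_-\in L^{p-1}_{sp}(\R^n)$ follows from $u_-\le|M|$ on $\Omega$ and $u_-\le|g|$ on $\R^n\setminus\Omega$ together with $g\in L^{p-1}_{sp}(\R^n)$; the same reasoning gives $u\in L^{p-1}_{sp}(\R^n)$, so all integrals below are finite by Remark~\ref{rem_superdef}. For a nonnegative $\eta\in C_0^\infty(\Omega)$, the symmetry of $K$ together with the antisymmetry $L(a,b)=-L(b,a)$ reduces the weak formulation to
\begin{equation*}
2\int_{\Omega}\eta(x)\left[\int_{\R^n}L(u(x),u(y))K(x,y)\dy\right]\dx.
\end{equation*}
For $x\in\supp\eta\subset\Omega$ we have $u(x)=M$, and the contribution from $y\in\Omega$ vanishes because $L(M,M)=0$; the remaining contribution equals $\int_{\R^n\setminus\Omega}(M-g(y))^{p-1}K(x,y)\dy\ge0$, since $M\ge g(y)$ almost everywhere on $\R^n\setminus\Omega$. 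This establishes the desired weak supersolution inequality.

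The last sub-step is to upgrade $u$ from weak supersolution to $(s,p)$-superharmonic by invoking Theorem~\ref{thm_supersuper2}. Since $u\equiv M$ on the open set $\Omega$, the function is trivially lower semicontinuous in $\Omega$ and satisfies $u(x)=\essliminf_{y\to x}u(y)=M$ for every $x\in\Omega$. Combined with the weak supersolution property just established, Theorem~\ref{thm_supersuper2} yields that $u$ is $(s,p)$-superharmonic in $\Omega$, finishing (i) and hence showing $u\in\mathcal U_g$. The only step involving an actual computation is the symmetry-based reduction in the previous paragraph, which I expect to be the main (though elementary) obstacle; the remaining items are read directly off the definitions.
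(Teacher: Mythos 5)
Your proposal is correct and follows essentially the same route as the paper: the paper takes the very same function $u=M\chi_\Omega+g\chi_{\R^n\setminus\Omega}$, reduces the weak formulation by symmetry to the nonnegative term $2\int_{\R^n\setminus\Omega}\int_\Omega L(M,g(y))\eta(x)K(x,y)\dxy$, and then upgrades to $(s,p)$-superharmonicity via Theorem~\ref{thm_supersuper2}. Your symmetrization step and sign argument match the paper's computation, just written out in more detail.
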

\begin{proof}
Let $\sup_{\R^n} g \leq M < \infty$ and take $u:=M\chi_\Omega + g\chi_{\R^n \setminus \Omega}$. Then clearly $u$ satisfies the properties (ii-iv) of Definition \ref{perron sol}. To obtain the property (i), we first have that $u \in W^{s,p}_{\rm loc}(\Omega) \cap L^{p-1}_{sp}(\R^n)$, and testing against a nonnegative test function $\eta \in C^\infty_0(\Omega)$ gives
\begin{align*}
&\int_{\R^n}\int_{\R^n}L(u(x),u(y))\big(\eta(x)-\eta(y)\big)K(x,y)\dxy \\
&\qquad \qquad = \,2\int_{\R^n \setminus \Omega}\int_{\Omega}L(M,g(y))\eta(x)K(x,y)\dxy \, \geq \, 0.
\end{align*}
Thus $u$ is a weak supersolution in $\Omega$, and further $(s,p)$-superharmonic in $\Omega$ by Theorem \ref{thm_supersuper2}.
\end{proof}

Now, we are in a position to prove the main theorem in this section, i.~\!e., Theorem~\ref{thm:Perron} stated in the introduction, which gives the expected alternative result, saying that the Perron solution has to be identically $+\infty$ or $-\infty$, or $(s,p)$-harmonic. 
\begin{proof}[Proof of Theorem \ref{thm:Perron}]
Let us denote by $H_g := \overline H_g$, the case of $H_g := \underline H_g$ being completely analogous. We may assume that $\mathcal U_g$ is non-empty since otherwise $H_g \equiv \infty$ in $\Omega$. Since $\mathcal U_g$ is non-empty, we must have that $(H_g)_+ \in L_{sp}^{p-1}(\R^n)$. According to Choquet's Topological Lemma (see, e.~\!g. \cite[Lemma 8.3]{HKM06}), there exists a decreasing sequence $\{u_j\}$ of functions in $\mathcal U_g$ converging to a function $u$ such that $\widehat u = \widehat H_g$. Here the lower semicontinuous regularization of a function $f \colon \R^n \to [-\infty,\infty]$ is defined by
\begin{equation*} 
\widehat f(x) := \lim_{r \to 0} \inf_{B_r(x)} f.
\end{equation*}
In particular, $\widehat f \leq f$ and $\widehat f$ is lower semicontinuous.
Let $D \Subset  \Omega$ be an open set such that $\R^n \setminus D$ satisfies the measure density condition~\eqref{eq:dens cond}. 
Then $P_{u_j,D} \in \mathcal U_g$ and it is $(s,p)$-harmonic in $D$ for all $j$ by Lemma~\ref{thm:Poisson mod}.
Moreover, $\{P_{u_j,D}\}_j$ is a decreasing sequence by Lemma~\ref{poisson monotone}. Let $v_D := \lim_{j \to \infty} P_{u_j,D}$. By Harnack's convergence theorem (Theorem~\ref{harnack conv}) either $v_D \equiv -\infty$ in $D$ or $v$ is $(s,p)$-harmonic in $D$. Furthermore, we may take any larger $U$ containing $D$ such that $\R^n \setminus U$ satisfies the measure density condition~\eqref{eq:dens cond}.
Since $P_{u_j,U} \leq P_{u_j,D}$ by Lemma \ref{poisson monotone set}, we have that  $\lim_{j \to \infty} P_{u_j,U} \equiv -\infty$ in $U$ if $v_D \equiv -\infty$ in $D$. Thus, $H_g \equiv - \infty$ in $\Omega$ if $v_D \equiv -\infty$ in any regular open component $D$. 

Suppose now that $H_g \not\equiv - \infty$ in $\Omega$. Therefore $v_D$ is 
$(s,p)$-harmonic in $D$ whenever the complement of $D \Subset \Omega$ satisfies the measure density condition~\eqref{eq:dens cond}. Let $D$ be such a set. Theorem~\ref{thm:Poisson mod} yields $H_g \leq P_{u_j,D} \leq u_j$, and taking the limit as $j \to \infty$ and, furthermore, lower semicontinuous regularizations, we obtain
\begin{align} \label{HgvDu}
H_g \leq v_D \leq u \qquad \text{and} \qquad \widehat H_g \leq \widehat v_D \leq \widehat u = \widehat H_g,
\end{align}
respectively. Consequently,
\[
v_D = \widehat v_D = \widehat H_g \leq H_g \leq v_D \quad \text{in } D,
\]
and thus $H_g = v_D$ in $D$.  

To obtain the $(s,p)$-harmonicity for $H_g$ in $\Omega$, let $\{D_k\}$, $k=1,2,\dots$, be an exhaustion of $\Omega$ by open regular subsets such that $D_k \Subset D_{k+1}$.
Proceeding as above, we obtain functions $v_k:= \lim_{j \to \infty}P_{u_j,D_k}$ that are $(s,p)$-harmonic in $D_k$ and $v_k = H_g$ in $D_k$. In particular, we have that $H_g$ is continuous in $\Omega$.
Since $P_{u_j,D_{k+1}} \leq P_{u_j,D_k}$ for every $j$ by Lemma \ref{poisson monotone set}, we have $v_{k+1} \leq v_k$. Let us denote by $v:=\lim_{k \to \infty} v_k$. Then for any $U \Subset \Omega$, $v$ is $(s,p)$-harmonic in $U$ by Theorem \ref{harnack conv} since $v_k$ is $(s,p)$-harmonic in $U$ when $k$ is large enough. The possibility that $v \equiv -\infty$ in $U$ is excluded since $v=H_g \not\equiv -\infty$ in $D_1$. Consequently, $v$ is $(s,p)$-harmonic in the whole $\Omega$. Now
\[
H_g = \widehat H_g = \widehat v_k \geq \widehat v = v \quad \text{in } \Omega
\]
by continuity of $H_g$ and $v$ in $\Omega$ together with \eqref{HgvDu}. The reverse inequality holds by \eqref{HgvDu}, and thus
$H_g = v$ in $\Omega$. Moreover, since $H_g = g = v$ almost everywhere in $\R^n \setminus \Omega$, we conclude that $H_g$ is $(s,p)$-harmonic in $\Omega$.
\end{proof}

\begin{remark}
Lemma \ref{l.f bnd} and Theorem \ref{thm:Perron} do also hold for the more general Perron solutions mentioned in Remark~\ref{perron general}.
\end{remark}

We conclude this paper with the lemma below, which assures that if there is a solution to the Dirichlet problem, then it is necessarily the Perron solution. In particular, this is the case under the natural hypothesis of Theorem~\ref{thm:basic Dir}. 
\begin{lemma}\label{sortofresol}
Assume that $h \in C(\overline\Omega)$ is a weak solution in $\Omega$ such that
\begin{align*}
\lim_{\Omega \ni y \to x} h(y) = g(x) \quad \text{for every } x \in \partial\Omega \qquad \text{and} \qquad h=g \quad \text{a.~\!e. in } \R^n \setminus \Omega
\end{align*}
for some $g \in C(\Omega') \cap L^{p-1}_{sp}(\R^n)$ with $\Omega' \Supset \Omega$. Then $\overline H_g = h = \underline H_g$.
\end{lemma}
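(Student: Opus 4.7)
The plan is to show directly that $h$ lies in both Perron classes $\mathcal U_g$ and $\mathcal L_g$, and then squeeze using Lemma~\ref{perron order}. Since $h\in C(\overline\Omega)$ is a weak solution in $\Omega$, Corollary~\ref{cor_harmharm} gives that $h$ is $(s,p)$-harmonic in $\Omega$; in particular, $h$ is simultaneously $(s,p)$-superharmonic and $(s,p)$-subharmonic in $\Omega$.

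First I would verify that $h\in\mathcal U_g$ in the sense of Definition~\ref{perron sol}. Conditions (i) and (iv) are immediate from the $(s,p)$-superharmonicity and from the hypothesis $h=g$ a.e.\ in $\R^n\setminus\Omega$. Condition (ii) follows since $h\in C(\overline\Omega)$, so $h$ is bounded on $\overline\Omega$. For condition (iii), fix $x\in\partial\Omega$; by hypothesis
\[
\liminf_{\Omega\ni y\to x}h(y)=\lim_{\Omega\ni y\to x}h(y)=g(x),
\]
while the assumption $g\in C(\Omega')$ with $\Omega'\Supset\Omega$ ensures that $g$ is continuous on a neighborhood of $\partial\Omega$, so that
\[
\esslimsup_{\R^n\setminus\Omega\ni y\to x}g(y)=g(x).
\]
Thus equality holds in (iii), and $h\in\mathcal U_g$. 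Taking the pointwise infimum over the upper class yields $\overline H_g\le h$ in $\Omega$.

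Next I would apply the exactly symmetric argument to $-h$ with boundary datum $-g$: since $-h$ is also $(s,p)$-harmonic in $\Omega$ and $-g$ is continuous on $\Omega'$, the same four properties give $-h\in\mathcal U_{-g}$, i.e.\ $h\in\mathcal L_g$. Consequently $\underline H_g\ge h$ in $\Omega$. Combining this with $\overline H_g\le h$ and the ordering $\overline H_g\ge\underline H_g$ provided by Lemma~\ref{perron order}, we obtain
\[
h\ \ge\ \overline H_g\ \ge\ \underline H_g\ \ge\ h\qquad\text{in }\Omega,
\]
forcing $\overline H_g=h=\underline H_g$ in $\Omega$. Outside $\Omega$ all three functions coincide with $g$ almost everywhere by Definition~\ref{perron sol}(iv), which finishes the proof.

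There is no genuine obstacle here: the argument is a clean verification of the definitions, with the only delicate point being condition (iii), and that is trivialized by the assumption $g\in C(\Omega')$, which lets essential upper and lower limits from $\R^n\setminus\Omega$ collapse to the pointwise value of $g$ on $\partial\Omega$.
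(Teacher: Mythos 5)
Your proposal is correct, but its second half takes a different route from the paper. The paper also starts by noting $h \in \mathcal U_g$, which gives $\overline H_g \le h$; for the reverse inequality, however, it argues directly: given an arbitrary $u \in \mathcal U_g$ and $\eps>0$, the boundary condition in Definition~\ref{perron sol}(iii) together with the continuity of $h$ up to $\partial\Omega$ produces an open set $D \Subset \Omega$ with $u+\eps > h$ in $\R^n\setminus D$, and then the comparison property of the $(s,p)$-superharmonic function $u+\eps$ against the continuous weak solution $h$ (Lemma~\ref{l.(iii) vs (iii')}) gives $u+\eps \ge h$ in $D$, hence $u \ge h$ in $\Omega$ after letting $\eps \to 0$; taking the infimum yields $\overline H_g \ge h$, and $\underline H_g$ is handled by symmetry. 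You instead observe that $h$, being $(s,p)$-harmonic by Corollary~\ref{cor_harmharm}, belongs to both $\mathcal U_g$ and $\mathcal L_g$, and squeeze via Lemma~\ref{perron order}. This is a legitimate shortcut: Lemma~\ref{perron order} applied with $v=h \in \mathcal L_g$ is precisely the statement ``$u \ge h$ for every $u \in \mathcal U_g$'' that the paper proves by hand, so the comparison work is delegated to Lemma~\ref{perron order} (ultimately to Theorem~\ref{thm:comparison}) rather than redone through the $\eps$--compactness argument; your write-up is shorter, while the paper's is more self-contained and exhibits the comparison explicitly. Two cosmetic remarks: in checking Definition~\ref{perron sol}(iii) you only need $\esslimsup_{\R^n\setminus\Omega\ni y\to x} g(y) \le g(x)$, which continuity of $g$ near $\partial\Omega$ gives at once; the equality you assert would additionally require $\R^n\setminus\Omega$ to have positive measure arbitrarily close to $x$ and is not needed. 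Similarly, boundedness of $h$ on $\overline\Omega$ (your condition (ii)) uses that $\Omega$ is bounded, i.e.\ $\Omega'\Supset\Omega$ in the sense of compact containment, an assumption the paper's own proof also uses implicitly.
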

\begin{proof}
The situation is symmetric, so we only need to prove the result for $\overline H_g$.
We have $h \geq \overline H_g$ since $h \in \mathcal U_g$.
To obtain the reverse inequality, let $u \in \mathcal U_g$.
Then for every $\eps>0$ there exists an open set $D \Subset \Omega$ such that $u+\eps > h$ in $\R^n \setminus D$.
Consequently, $u+\eps \geq h$ in $D$ since $u+\eps$ is $(s,p)$-superharmonic in $\Omega$.
Letting $\eps \to 0$ we obtain that $u \geq h$, and taking the infimum over $\mathcal U_g$ yields $\overline H_g \geq h$.
\end{proof}

\noindent
\\ {\bf Acknowledgements.}\
We would like to thank the  referees for their  useful suggestions, which allowed to improve the manuscript.

\vspace{3mm}

\vspace{3mm}
\end{document}